\numberwithin{equation}{section}
\newtheorem{theo}{Theorem}[section]
\newtheorem{lma}[theo]{Lemma}
\newtheorem{cor}[theo]{Corollary}
\newtheorem{defn}[theo]{Definition}
\newtheorem{cond}[theo]{Condition}
\newtheorem{prop}[theo]{Proposition}
\newtheorem{rem}[theo]{Remark}
\DeclareMathOperator{\R}{\mathbb{R}}
\DeclareMathOperator{\N}{\mathbb{N}}
\DeclareMathOperator{\E}{\mathbb{E}}
\DeclareMathOperator{\Lip}{Lip}
\DeclareMathOperator{\ess}{ess}
\newcommand{\vertiii}[1]{{\left\vert\kern-0.25ex\left\vert\kern-0.25ex\left\vert #1 
    \right\vert\kern-0.25ex\right\vert\kern-0.25ex\right\vert}}
\newcommand\given[1][]{\:#1\vert\:}
\renewcommand{\epsilon}{\varepsilon}
\title[The box-counting dimension of random self-affine sets]{The box-counting dimension of random box-like self-affine sets}
\author{Sascha Troscheit}
\address{Sascha Troscheit\\Department of Pure Mathematics\\University of Waterloo\\200 University Ave W\\Waterloo\\ON\\N2L 3G1\\Canada.}
\date{\today}
\email{stroscheit@uwaterloo.ca}
\urladdr{http://www.math.uwaterloo.ca/~strosche/}
\thanks{The author initially supported by EPSRC Doctoral Training Grant EP/K503162/1, and thanks Kenneth Falconer and Mike Todd for many helpful discussions.
A significant proportion of this manuscript was prepared while visiting the Instituto Nacional de Mathem\'atica Pura e Aplicada (IMPA) and Universidade Federal do Rio de Janeiro (UFRJ) in July/August 2015. The author thanks both IMPA and UFRJ for their hospitality and acknowledges the financial support of IMPA and the EU Marie-Curie IRSES Brazilian-European partnership in Dynamical Systems (FP7-PEOPLE-2012-IRSES 318999 BREUDS)}
\begin{document}
\begin{abstract}
In this paper we study two random analogues of the box-like self-affine attractors introduced by Fraser, which themselves were generalisations of Sierpi\'nski carpets. We determine the almost sure box-counting dimension for the homogeneous random case ($1$-variable random), and give a sufficient condition for the almost sure box-counting dimension to be the expectation of the box-counting dimensions of the corresponding deterministic attractors. Furthermore, we find the almost sure box-counting dimension of the random recursive model ($\infty$-variable), which includes affine fractal percolation. 
\end{abstract}

\subjclass[2010]{Primary 28A80; Secondary 37C45, 60J80.}
\keywords{Self-affine, Random attractor, Box-counting dimension, Random set.}

\maketitle

\section{Introduction}
Self-affine sets, both deterministic and random, have received substantial attention over the past three decades. Far from being an exhausted topic, self-affine sets are still not well understood but much progress has been made in recent years. Of special interest are the dimensional and measure theoretic properties of these sets, analysing their geometric scaling properties. Generally speaking, one would expect the Hausdorff, packing and box-counting dimension to coincide with a quantity called the `affinity dimension'. Recent research has focussed on conditions under which the dimensions coincide and under which circumstances there is a `dimension drop'.
In this paper we establish the almost sure box-counting dimension of two random models of self-affine box-like carpets in the sense of Fraser~\cite{Fraser12}. Before stating our results in detail, which include homogeneous, random recursive, and fractal percolation of self-affine carpets, we give a brief history of both deterministic and random self-affine sets.

Let $(X,d)$ be a (complete) compact metric space. Let $f_{i}:X\to X$ be strictly contracting self maps for some finite index set $i\in\mathcal{I}_{0}$. The (deterministic) attractor $F$ of the iterated function system (IFS) $\mathbb{I}_{0}=\{f_{i}\}_{i\in\mathcal{I}_{0}}$ is the unique compact non-empty set satisfying the invariance (see Hutchinson~\cite{Hutchinson81}),
\begin{equation}
F=\bigcup_{i\in\mathcal{I}_{0}}f_{i}(F).
\end{equation}
Restricting the functions $f_{i}:\R^{d}\to\R^{d}$ to contracting similitudes, one obtains the class of self-similar sets. Much is known about these and we refer the reader to Falconer~\cite{FractalGeo} and the many references contained therein.
Here we are specifically concerned with the class of self-affine carpets. These are the attractors of finite IFSs where $f_{i}$ are affine contractions on $\R^{2}$, i.e.\ they are of the form 
\(
f_{i}\left(\mathbf{x}\right)=\mathbf{M}_{i}\mathbf{x}+\mathbf{v}_{i}
\)
for $\mathbf{x},\mathbf{v}_{i}\in\R^{2}$ and real-valued non-singular $2$ by $2$ matrices $\mathbf{M}_{i}$ with $\lVert\mathbf{M}_{i}\rVert<1$.

For a comprehensive survey of deterministic self-affine sets we refer the reader to Falconer~\cite{SelfAffineSurveyFalconer} but briefly comment that the study of self-affine carpets started with Bedford~\cite{Bedford84} and McMullen~\cite{McMullen84} who determined, independently, the Hausdorff and box-counting dimension of carpets with mappings that map the unit square onto subrectangles obtained by dividing the unit square into an $n\times m$ grid, see Figure~\ref{BedfordFengFraserPic}~(left-most).
Several generalisations were considered by Lalley and Gatzouras~\cite{Gatzouras92}, Bara\'nski~\cite{Baranski07}, and Feng and Wang~\cite{Feng05}, analysing their dimensional properties.
Fraser~\cite{Fraser12},~\cite{Fraser15pre?} extended the class to self-affine carpets with IFSs mapping the unit square into rectangles that are still aligned with the coordinate axes, see Figure~\ref{BedfordFengFraserPic}.

\begin{figure}[t]
\begin{center}
\setlength{\fboxsep}{-1.5pt}
\fbox{\includegraphics[width=0.30\textwidth]{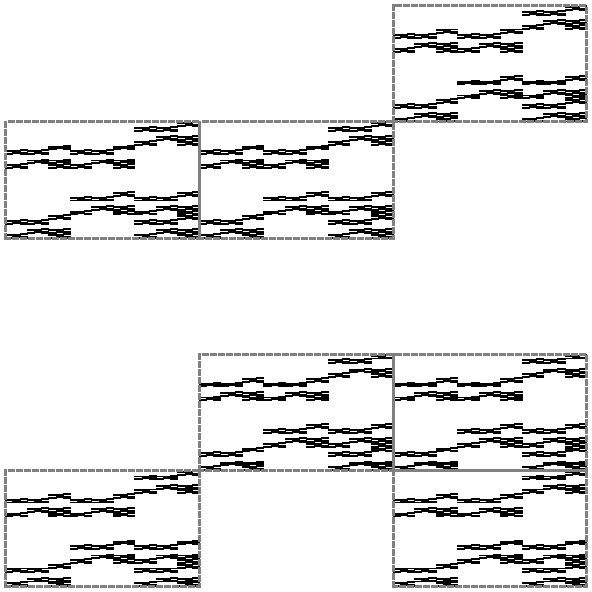}}\hfill
\fbox{\includegraphics[width=0.30\textwidth]{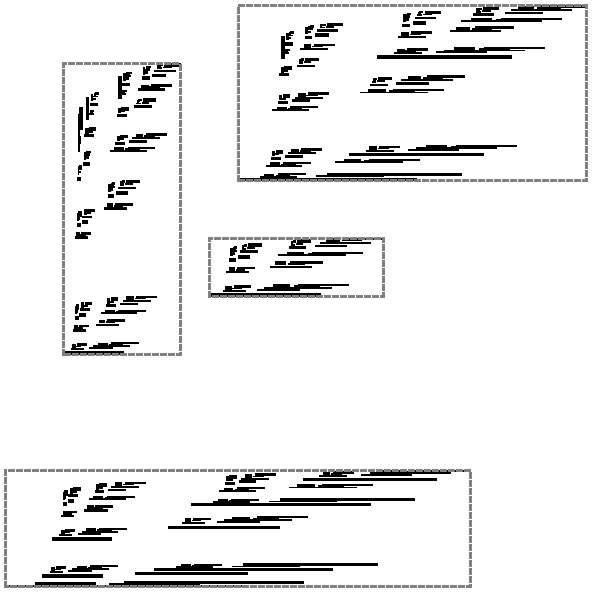}}\hfill
\fbox{\includegraphics[width=0.30\textwidth]{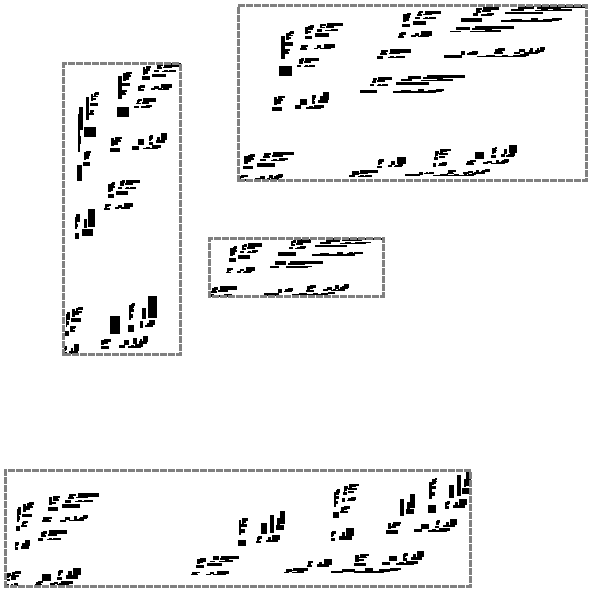}}\hfill\mbox{}
\caption{From left to right: Bedford-McMullen carpet (left), Feng-Wang carpet (middle) and Fraser carpet (right) showing the difference in the introduction of just one map with a reflection.}
\label{BedfordFengFraserPic}
\end{center}
\end{figure}

Several random variants have been considered; Falconer~\cite{Falconer88} considered deterministic self-affine sets with randomly chosen translation vectors and showed that the Hausdorff dimension coincides almost surely (with respect to the chosen translation vectors) with the affinity dimension under some minor conditions on the norm of the defining matrices, see also Solomyak~\cite{Solomyak98} for a more general result.
The affinity dimension can be considered the `best guess' for the Hausdorff, packing and box-counting dimension of self-affine sets and it is of major current interest to establish exactly when these notions do, or do not, coincide.
Jordan, Pollicott and Simon~\cite{Jordan06a}, and Jordan and Jurga~\cite{Jordan14a} studied limit sets with random perturbations of the translation (`noise') at every level of the construction, and found the same almost sure coincidence with the affinity dimension.
Fraser and Shmerkin~\cite{Fraser15pre??} considered a Bedford-McMullen construction with random translation vectors that keep the column structure intact.
Under these conditions they showed that the Hausdorff dimension is strictly less than the affinity dimension, a situation known as a `dimension drop'.

A $1$-variable (homogeneously random) version of Bedford-McMullen carpets was considered by Gui and Li~\cite{Gui08}. 
Here a subdivision $n,m$ of the unit square is fixed and the attractor is created by assigning a probability to all choices of rectangles and then choosing an IFS at every step of the construction independently according to those probabilities. The authors found that in this setting the almost sure Hausdorff and box-counting dimensions equal the mean of the dimensions of the individual deterministic attractors. We will show in Corollary~\ref{ExpectationCor} that this holds in a more general setting for the box-counting dimension, but fails e.g.\ by simply choosing different subdivisions $n_{i}$ and $m_{i}$ for the Bedford-McMullen type IFSs. 
In a later article, Gui and Li~\cite{Gui10} were looking at a similar $1$-variable set up that allowed the subdivisions to vary at different steps in the construction determining the Hausdorff and box-counting dimension and gave sufficient conditions for positive Hausdorff measure. 
Luzia~\cite{Luzia11} considered a $1$-variable construction of self-affine sets of Lalley-Gatzouras type and determined the Hausdorff dimension of these. At this point we refer the reader also to Rams~\cite{Rams10} which gave a more general approach for the determining the Hausdorff dimension in these $1$-variable schemes.
Gatzouras and Lalley~\cite{Gatzouras94} on the other hand were interested in percolation of Bedford-McMullen carpets, which are $\infty$-variable random IFS, also covered in this article.

 J\"arvenp\"a\"a et al.~\cite{Jarvenpaa14a},~\cite{Jarvenpaa15},~\cite{Jarvenpaa15a} also considered a general model (code-tree fractals) that overlaps somewhat with the random model in this paper. However, they consider random translations in their construction and recover almost sure coincidence with the affinity dimension. 
Here we compute the box-counting dimension of random box-like sets without necessarily randomising the translation vectors and thus our results can be used to determine conditions for which there is a `dimension drop' where Hausdorff and affinity dimension do not coincide almost surely.

One of the key elements of our proofs is a recent result obtained by Troscheit~\cite{Troscheit15} stating that self-similar $1$-variable and $\infty$-variable random graph directed (RGDS) systems have equal Hausdorff and box-counting dimension almost surely, see also Roy and Urba\'{n}ski~\cite{Roy11} for a similar approach. 
We use this fact when considering the projections of random self-affine box-like sets onto the horizontal and vertical axes since the projections are self-similar RGDSs.

This article is structured as follows; In Section~\ref{notation} we introduce most basic notation for random iterated function systems common to $1$-variable and $\infty$-variable box-like self-affine carpets. Section~\ref{1varSect} contains our results for random homogeneous ($1$-variable) box-like self-affine carpets and Section~\ref{inftyVarRes} contains our results for random recursive, or $\infty$-variable, carpets. This is followed by some examples in Section~\ref{sec:examples}, while all proofs are contained in Section~\ref{proofs}.

\section{Notation and basic definitions}\label{notation}


We begin by recalling the definition of the box-counting dimension of a set.
\begin{defn}
Let $E\subset X$ be a totally bounded subset of a complete metric space $X$. Denote by $N_{\delta}(E)$ the least number of sets of diameter at most $\delta>0$ needed to cover $E$. The \emph{upper box-counting dimension} and \emph{lower box-counting dimension} are defined, respectively, by
\[
\overline\dim_{B}(E)=\limsup_{\delta\to0}\frac{\log N_{\delta}(E)}{-\log\delta}
\]
and
\[
\underline\dim_{B}(E)=\liminf_{\delta\to0}\frac{\log N_{\delta}(E)}{-\log\delta}.
\]
If $\overline\dim_{B}E=\underline\dim_{B}E$ we will refer to the \emph{box-counting dimension} $\dim_{B}E$.
\end{defn}
Note that the definition does not change if $N_{\delta}(E)$ is substituted by $M_{\delta}(E)$, the number of grid squares in a $\delta$-mesh that intersect $E$. We make use of this in proving upper bounds. More equivalent definitions of the box-counting dimension, and the definition of the Hausdorff dimension, can be found in Falconer~\cite{FractalGeo}.

We further comment that for topological reasons the packing dimension equals the upper box-counting dimension for all the constructions we will consider. Therefore our results not only determine the box-counting but also the packing dimension. This follows directly from Corollary~3.10 in~\cite{FractalGeo}.

\subsection{Random Iterated Function Systems}
Let ${\mathbb{I}}=\{\mathbb{I}_{i}\}_{i\in\Lambda}$ be a finite collection of IFSs indexed by $i\in\Lambda=\{1,\dots,N\}$, where $\mathbb{I}_{i}=\{f_{i}^{1},\dots,f_{i}^{\#\mathbb{I}_{i}}\}$ and the affine maps $f_{i}^{j}:\R^{2}\to \R^{2}$ are strict contractions. 
With the collection ${\mathbb{I}}$ we associate a (non-degenerate) probability vector $\mathbf{p}=\{p_{i}\}$, i.e.\ $p_{i}>0$ for all $i\in\Lambda$ and $\sum p_{i}=1$.
We call the pair $(\mathbb{I},\mathbf{p})$ a random iterated function system (RIFS).
In Sections~\ref{1varSect} and \ref{inftyVarRes} we will define the random sets associated with them.

The self-affine sets we are considering were introduced by Fraser~\cite{Fraser12} and are known as box-like self-affine carpets. 
\begin{defn}\label{def:frasercarpet}
For a given $i$, let $f_{i}^{j}:\R^{2}\to\R^{2}$ be of the form
\[
f_{i}^{j}(\mathbf{x})=\begin{pmatrix}
a_{i}^{j} 	& 	0		\\
0		&	b_{i}^{j}
 \end{pmatrix}
 \mathbf{Q_{i}^{j}}
 \begin{pmatrix}
 x_{1}\\
 x_{2}
 \end{pmatrix}
 +
 \begin{pmatrix}
 u_{i}^{j}\\
 v_{i}^{j}
 \end{pmatrix},
\]
where $0<a_{i}^{j}<1$ and $0<b_{i}^{j}<1$, $\mathbf{x}=(x_{1},x_{2})$,  and $u_{i}^{j},  v_{i}^{j}\in\R$ are such that the unit square $\Delta=[0,1]^{2}$ is mapped into itself, that is $f_{i}^{j}(\Delta)\subset \Delta$ and
\[
\mathbf{Q}_{i}^{j}\in\left\{\begin{pmatrix}\pm 1&0\\0&\pm1\end{pmatrix},\begin{pmatrix}0&\pm1\\\pm1&0\end{pmatrix}\right\}.
\]

If all maps $f_{i}^{j}\in\mathbb{I}_{i}$ satisfy the criteria above we call the IFS $\{f_{i}^{j}\}_{j=1}^{\#\mathbb{I}_{i}}$ \emph{box-like}.
If all IFSs $\mathbb{I}_{i}\in\mathbb{I}$ are box-like we call the RIFS $(\mathbb{I},\mathbf{p})$ \emph{box-like}.
\end{defn}

We remark that the matrices $\mathbf{Q}_{i}^{j}$ represent elements of the symmetry group of isometries $D_{8}$ such that $f_{i}^{j}$ maps the square onto rectangles that are still aligned with the $x$ and $y$ axis.

\begin{defn}
Let $({\mathbb{I}},\mathbf{p})$ be box-like as above. 
If there exist at least two pairs $(i,j)$ and $(k,l)$ with
\[
\mathbf{Q}_{i}^{j}\in\left\{
\begin{pmatrix}
0&\pm1\\\pm1&0
\end{pmatrix}\right\}, \text{ and } 
\mathbf{Q}_{k}^{l}\in\left\{
\begin{pmatrix}
\pm1&0\\0&\pm1
\end{pmatrix}\right\},
\]
we call $({\mathbb{I}},\mathbf{p})$ \emph{non-separated}.
Otherwise, we call $({\mathbb{I}},\mathbf{p})$ \emph{separated}. 
\end{defn}
We note that this differs from~\cite{Fraser12} by only requiring one of the IFSs in $\mathbb{I}_i\in\mathbb{I}$ to have one map with a differing diagonal structure. This is the appropriate analogue to consider in the random setting as every IFS is chosen infinitely often with full probability, providing the necessary `mixing' of projections.

Results in dimension theory usually require some assumptions on the level of overlap. We introduce the random analogue of the condition introduced by Feng and Wang~\cite{Feng05}.

\begin{defn}\label{defn:UORC}
Let $(\mathbb{I},\mathbf{p})$ be a box-like RIFS. We say that $({\mathbb{I}},\mathbf{p})$ satisfies the \emph{uniform open rectangle condition (UORC)}, if we have, for every $i\in\Lambda$, 
\[
f_{i}^{j}\left(\mathring\Delta\right)\cap f_{i}^{k}\left(\mathring{\Delta}\right)\neq\varnothing\;\;\;\Rightarrow\;\;\; k=j.
\]
Here $\mathring\Delta=(0,1)^{2}$ is the open unit square.
\end{defn}

\subsection{Codings}
To each map $f_{i}^{j}$ we associate a unique symbol $e_{i}^{j}$ to enable us to code points in the random attractor.
We adopt the notation of \emph{arrangement of words} in \cite{Troscheit15} to succinctly write sets of codings.
 
\begin{defn}
Let $\mathcal{G}^{E}=\{e_{i}^{j} \mid  i\in\Lambda, 1\leq j\leq \#\mathbb{I}_{i} \}$, and define the \emph{prime arrangements} $\mathcal{G}$ to be the set of symbols $\mathcal{G}=\{\varnothing,\epsilon_{0}\}\cup\mathcal{G}^{E}$. Clearly both $\mathcal{G}$ and $\mathcal{G}^{E}$ are finite and non-empty.

Define $\beth^{\odot}$ to be the free monoid with operation $\odot$, generators $\mathcal{G}^{E}$, and identity $\epsilon_{0}$, and define $\beth^{\sqcup}$ to be the free commutative monoid with operation $\sqcup$, generators $\beth^{\odot}$, and identity $\varnothing$. We define $\odot$ to be left and right multiplicative over $\sqcup$, and $\varnothing$ to annihilate with respect to $\odot$. That is, given an element $e$ of $\beth^{\odot}$, we get $e\odot\varnothing=\varnothing\odot e=\varnothing$. 
We write $\beth^{*}$ for the set of finite algebraic expressions with elements $\mathcal{G}$ and operations $\sqcup$ and $\odot$. Using distributivity $\beth=(\beth^{*},\sqcup,\odot)$ is the non-commutative free semi-ring with `addition' $\sqcup$ and `multiplication' $\odot$ and generator $\mathcal{G}^{E}$ and we will call $\beth$ the \emph{semiring of arrangements of words} and refer to elements of $\beth^{*}$ as \emph{(finite) arrangements of words}. 
\end{defn}
We use the convention to `multiply out' arrangements of words and write them as elements of $\beth^{\odot}$ separated by $\sqcup$. We omit brackets, where appropriate, replace $\odot$ by concatenation to simplify notation, and for each such reduced arrangement of words $\phi=\varphi_{1}\sqcup\varphi_{2}\sqcup\dots\sqcup\varphi_{k}$ write $\varphi_{i}\in\phi$ to refer to the subarrangements $\varphi_{i}$ that are elements of $\varphi_{i}\in\beth^{\odot}$.

Arrangements of words can be used to describe compact subsets by defining a coding map, mapping elements $\phi\in\beth^{*}$ onto compact subsets $\mathcal{K}(\R^{2})$ recursively.
\begin{defn}
Given an arrangement of words $\phi$ and a compact set $K\in\mathcal{K}(\R^{2})$, we define $f(\phi,K)$ recursively to be the compact set satisfying:
\[
f(\phi,K)=\begin{cases}
f(\phi_{1},K)\cup f(\phi_{2},K),&\text{ if }\phi=\phi_{1}\sqcup\phi_{2};\\
f(\phi_{1}, f(\phi_{2},K)),&\text{ if }\phi=\phi_{1}\odot\phi_{2};\\
f_{i}^{j}(K),&\text{ if }\phi=e_{i}^{j};\\
K,&\text{ if }\phi=\epsilon_{0};\\
\varnothing,&\text{ if }\phi=\varnothing.
\end{cases}
\]
\end{defn}

To each IFS we associate an arrangement of words.

\begin{defn}
Let $W_{i}$ be the arrangement of words that are the letters coding the maps of the IFS $\mathbb{I}_{i}$,
\[
W_{i}=e_{i}^{1}\sqcup e_{i}^{2}\sqcup\dots\sqcup e_{i}^{\#\mathbb{I}_{i}}.
\]
\end{defn}

This representation now allows us to define sets involving the IFSs recursively by right multiplication of $W_{i}$ to existing codings.

\subsection{Projections}
Our results depend on the box-counting dimensions of the orthogonal projections onto the $x$ and $y$ axes. We write $\pi_{x}$ and $\pi_{y}$ to denote these projections, respectively:
\[
\pi_{x}:\R^{2}\to\R\,,\;\;\text{ where }\;\; \pi_{x}\left((z_{1},z_{2})^{\top}\right) = z_{1}\,,\;\;\text{ and}
\]
\[
\pi_{y}:\R^{2}\to\R\,,\;\;\text{ where }\;\; \pi_{y}\left((z_{1},z_{2})^{\top}\right) = z_{2}.
\]
For $e\in\beth^{\odot}$, let $\alpha_{M}(e)=\max_{z\in\{x,y\}}\lvert \pi_{z}(f(e,\Delta))\rvert$ and $\alpha_{m}(e)=\min_{z\in\{x,y\}}\lvert \pi_{z}(f(e,\Delta))\rvert$ be the length of the longest and shortest edge, respectively, of the rectangle $f(e,\Delta)$. We define $\overline{s}(e,F)$ to be the upper box-counting dimension of the projection of a compact set $F\subset\R^2$ onto the line parallel to the longest side of $f(e,\Delta)$, that is
\[
\overline{s}(e,F)=
\begin{cases}
\overline{\dim}_{B}(\pi_{x}F)	,	& \text{if } \lvert \pi_{x}(f(e,\Delta))\rvert \geq \lvert \pi_{y}(f(e,\Delta))\rvert,\\
\overline{\dim}_{B}(\pi_{y}F)	,	& \text{otherwise.}
\end{cases}
\]
Analogously, let $\underline{s}(e,F)$ be the lower box-counting dimension of the projection of $F$. If the box-counting dimension exists we write $s(e,F)$ for the common value. 
Let $\overline{s}^{x}(F)=\overline{\dim}_{B}(\pi_{x}F)$ and $\overline{s}^{y}(F)=\overline{\dim}_{B}(\pi_{y}F)$, with $\underline{s}^{x}(F)$, $\underline{s}^{y}(F)$, $s^{x}(F)$, and $s^{y}(F)$ defined analogously.
We will write $\pi_{e}$ to denote the projection, $\pi_{x}$ or $\pi_{y}$, parallel to the long side of the rectangle $f(e,\Delta)$, choosing arbitrarily if they are equal.

Finally, we define stochastically additive and sub-additive random variables.
\begin{defn}\label{defn:stochadd}
Let $\{X_{i}(\omega)\}_{i}$ be a family of measurable random variables defined on some probability space $\Omega$, with $\omega\in\Omega$ and $i\in\N$. Let $\sigma:\Omega\to\Omega$.
If $X_{i+j}(\omega)\leq X_{i}(\omega) + X_{j}(\sigma^{i} \omega)$ for all $i,j\in\N$ and $\omega\in\Omega$, then $\{X_{i}(\omega)\}_{i}$ is said to be \emph{stochastically sub-additive}. In the case of equality, $\{X_{i}(\omega)\}_{i}$ is said to be \emph{stochastically additive}. The value $\left(X_{i+j}(\omega)-X_{i}(\omega) - X_{j}(\sigma^{i} \omega)\right)^{+}$, where $(x)^{+}=\max\{0,x\}$ is called the \emph{sub-additive defect}.
\end{defn}

\section{Results for {1}-variable self-affine carpets}\label{1varSect}\index{$1$-variable attractor}\index{self-affine set}

Let $\Lambda^{\N}=\{1,\dots,N\}^{\N}$ be the set of all (infinite) sequences with entries in $\Lambda$; we refer to these sequences as (infinite) words or codings and for $w\in\Lambda^{\N}$ we write $w_{i}$ for the $i$-th letter in the sequence, i.e.\ the $i$-th letter in the word $w$. We define the distance between $x,y\in\Lambda^{\N}$ to be $d(x,y)=2^{-\lvert x\wedge y\rvert}$, where $\lvert x\wedge y\rvert=\max\{k \mid x_{i}=y_{i} \text{ for } i\leq k\}$ and $d(x,y)=0$ if $x=y$. 
Then $d$ is a metric and we consider the metric \emph{coding space}, $(\Lambda^{\N},d)$.
We sometimes need finite words and set $\Lambda^{k}$ to be the set of words of length $k$. The set $\Lambda^{0}$ contains only the empty word $\epsilon_{0}$ and the set of all finite codings is $\Lambda^{*}=\bigcup_{k=0}^{\infty} \Lambda^{k}$. For $w\in\Lambda^{*}$ we write $\lvert w\rvert$ to denote the length of the word $w$ and we let $[w]=\{x\in\Lambda^{\N} \mid x_{i}=w_{i}\text{ for }1\leq i\leq\lvert w\rvert\}$ be the \emph{cylinder} given by the coding $w$. With the given metric, the cylinders are clopen sets and are a basis for the topology on $(\Lambda^{\N},d)$.
By the Carath\'eodory extension theorem we can define a measure on these cylinders which extends to a unique Borel measure on $\Lambda^{\N}$. Let $\mu$ be the Bernoulli probability measure induced by the probability vector $\mathbf{p}$, i.e.\ for a cylinder $[w]$, $w\in\Lambda^{*}$ we set
\[
\mu([w])=\prod_{i=1}^{N}p_{i}^{\#\{j \mid w_{j}=i\}}.
\] 
This defines a Borel probability measure in the usual way.
The probability space we are considering in this section is $(\Omega,\mathcal{B}(\Omega),\mu)$, where $\Omega=\Lambda^{\N}$ and $\mathcal{B}(\Omega)$ is the Borel $\sigma$-algebra of $\Omega$. Almost sure results for the $1$-variable setting are with respect to the probability measure $\mu$.

We now define the random set we are investigating in this section. In fact we associate a set $F_{\omega}$ to every $\omega\in\Lambda^{\N}$. Choosing $\omega$ randomly according to $\mu$ gives us the random attractor $F_{\omega}$.

\begin{defn}\label{defn:1varcoding}
The \emph{$k$-level coding} with respect to realisation $\omega\in\Lambda^{\N}$ is 
\[
\mathbf{C}^{k}_{\omega}=W_{\omega_{1}}\odot W_{\omega_{2}} \odot\dots\odot W_{\omega_{k}}\;\;(k\in\N)\;\;\text{ and }\;\; \mathbf{C}^{0}_{\omega}=\epsilon_{0}.
\]
The arrangement of all finite codings $\mathbf{C}_{\omega}^{*}$ is defined  by
\[
\mathbf{C}_{\omega}^{*}=\bigsqcup_{i=0}^{\infty} \mathbf{C}_{\omega}^{i}.
\] 
\end{defn}

Recall that $\sqcup$ represents addition in the semiring $\beth$.

 \begin{defn}
The \emph{$k$-level prefractal $F_{\omega}^{k}$} and the \emph{$1$-variable random box-like self-affine carpet} $F_{\omega}$ are
\[
F_{\omega}^{k}=f(\mathbf{C}^{k}_{\omega},\Delta)=\bigcup_{e\in \mathbf{C}^{k}_{\omega}}f(e,\Delta)\subset\R^{2}
\]
and
\[
F_{\omega}=\bigcap_{k=1}^{\infty}f(\mathbf{C}^{k}_{\omega},\Delta)=\bigcap_{k=1}^{\infty}\bigcup_{e\in \mathbf{C}^{k}_{\omega}}f(e,\Delta)\subset\R^{2},
\]
where $\Delta=[0,1]^{2}$.
\end{defn}
For reasons of non-triviality we assume that each IFS in ${\mathbb{I}}$ has at least one map, with at least one IFS containing two maps. This guarantees that $F_{\omega}$ is almost surely not a singleton.

We define a singular value function for each realisation $\omega\in\Lambda^{\N}$.
\begin{defn}
Let $e\in\beth^{\odot}$, we define the \emph{upper (random) modified singular value function} by
\[
\overline{\psi}_\omega^s(e)=\alpha_{M}(e)^{\overline{s}(e,F_{\omega})}\alpha_{m}(e)^{s-\overline{s}(e,F_{\omega})}.
\]
Let $\overline{\Psi}_\omega^k(s)$ be the sum of the modified singular values over all $k$-level words,
\[
\overline{\Psi}_\omega^k (s)=\sum_{e\in\mathbf{C}^{k}_{\omega}}\overline{\psi}_\omega^s(e).
\]
We let $\underline{\psi}_\omega^s(e)$ and $\underline{\Psi}_\omega^k (s)$ be the  lower  modified singular value function and its sum, defined analogously.
\end{defn}
We will now introduce the last component, the pressure, which relates to the topological pressure of the associated dynamical system.
\begin{defn}
Let $s\in\R^{+}_{0}$, the \emph{upper $s$-pressure} for realisation $\omega\in\Lambda^{\N}$ is given by 
\[
\overline{P}_{\omega}(s)=\overline\lim_{k\to\infty}\left(\overline{\Psi}_\omega^k(s)\right)^{1/k}.
\]
The lower pressure $\underline{P}_{\omega}$ is defined analogously.
\end{defn}

\begin{lma}\label{lma:spressure}
There exists a function $P(s):\R^{+}_{0}\to\R^{+}_{0}$, the \emph{$s$-pressure}, such that $\overline{P}_{\omega}(s)=\underline{P}_{\omega}(s)=P(s)$ for $\mu$-almost every $\omega\in\Omega$. Further, $P(s)$ is continuous and strictly decreasing and there exists a unique $s_{B}\in\R_{0}^{+}$ satisfying,
\begin{equation}\label{unityPressure}
P(s_{B})=1.
\end{equation}
\end{lma}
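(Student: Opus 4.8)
The plan is to express $\overline P_\omega(s)$ and $\underline P_\omega(s)$ as limits of a stochastically sub- or super-additive sequence in the sense of Definition~\ref{defn:stochadd} and apply Kingman's subadditive ergodic theorem, after first eliminating the hidden $\omega$-dependence in the exponents $\overline s(e,F_\omega)$.

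First I would pin down the projection dimensions. The images $\pi_x F_\omega$ and $\pi_y F_\omega$ are attractors of self-similar $1$-variable random graph-directed systems, since the restriction of each $f_i^j$ to an axis is a similarity and the matrices $\mathbf{Q}_i^j$ only permute the two coordinate axes. By Troscheit~\cite{Troscheit15} the upper and lower box-counting dimensions of such systems coincide and are $\mu$-almost surely equal to deterministic constants, say $s^x$ and $s^y$. Hence, for $\mu$-a.e.\ $\omega$ and every $e$, $\overline s(e,F_\omega)=\underline s(e,F_\omega)=s(e)$, where $s(e)\in\{s^x,s^y\}$ is fixed by the orientation of the rectangle $f(e,\Delta)$ alone; consequently $\overline\psi^s_\omega(e)=\underline\psi^s_\omega(e)$ and $\overline\Psi^k_\omega(s)=\underline\Psi^k_\omega(s)$ almost surely. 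Once the genuine limit of $(\overline\Psi^k_\omega(s))^{1/k}$ is shown to exist this already forces $\overline P_\omega(s)=\underline P_\omega(s)$. The real gain of this reduction is that $X_k(\omega):=\log\overline\Psi^k_\omega(s)$ then depends (up to a null set) only on $\omega_1,\dots,\omega_k$, which is exactly what makes the shift-additivity of Definition~\ref{defn:stochadd} meaningful.

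Next I would set up the additive structure. A direct computation from the $D_8$-form of the maps shows that under concatenation $e=e_1\odot e_2$ the area is exactly multiplicative, $\alpha_M(e_1\odot e_2)\,\alpha_m(e_1\odot e_2)=\alpha_M(e_1)\alpha_m(e_1)\,\alpha_M(e_2)\alpha_m(e_2)$, while the longest edge is submultiplicative, $\alpha_M(e_1\odot e_2)\le\alpha_M(e_1)\alpha_M(e_2)$. Since the level-$(k+l)$ words factor uniquely as $\mathbf{C}^{k+l}_\omega=\mathbf{C}^{k}_\omega\odot\mathbf{C}^{l}_{\sigma^k\omega}$, a per-word comparison of $\overline\psi^s(e_1\odot e_2)$ with $\overline\psi^s(e_1)\overline\psi^s(e_2)$, summed over this product index set, yields a multiplicative inequality between $\overline\Psi^{k+l}_\omega(s)$ and $\overline\Psi^{k}_\omega(s)\,\overline\Psi^{l}_{\sigma^k\omega}(s)$, i.e.\ $X_k$ is stochastically sub- or super-additive. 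In the non-separated case the reflections force $s^x=s^y=:s^*$, and the exponent identity collapses the per-word ratio to $(\alpha_M(e_1\odot e_2)/(\alpha_M(e_1)\alpha_M(e_2)))^{2s^*-s}$, whose direction is governed by the sign of $2s^*-s$. As $X_k\in L^1$ (it lies between two quantities linear in $k$) and the Bernoulli shift $\sigma$ is ergodic, Kingman's theorem applied to $X_k$ or to $-X_k$ according to that sign gives $k^{-1}X_k\to\gamma(s)$ almost surely for a deterministic $\gamma(s)$, and we set $P(s)=e^{\gamma(s)}$. I expect this step to be the main obstacle: in the \emph{separated} case the orientation, and hence $s(e)$, can change along a concatenation, so exact sub-/super-multiplicativity can fail, and one must instead bound the sub-additive defect of Definition~\ref{defn:stochadd} and show it is negligible in the limit.

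Finally I would read off the analytic properties. With $\rho=\min_{i,j}\min\{a_i^j,b_i^j\}>0$ and $r=\max_{i,j}\max\{a_i^j,b_i^j\}<1$, every $k$-level word obeys $\rho^{k}\le\alpha_m(e)\le r^{k}$; writing $\overline\Psi^k_\omega(s)=\sum_e\alpha_M(e)^{s(e)}\alpha_m(e)^{-s(e)}\alpha_m(e)^{s}$, for $s'>s$ this gives $\rho^{k(s'-s)}\overline\Psi^k_\omega(s)\le\overline\Psi^k_\omega(s')\le r^{k(s'-s)}\overline\Psi^k_\omega(s)$, and after taking $(1/k)$-th powers and the limit, $\rho^{\,s'-s}P(s)\le P(s')\le r^{\,s'-s}P(s)$. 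The two-sided bound makes $P$ continuous on $\R^{+}_{0}$, while the right-hand inequality with $r<1$ makes it strictly decreasing. Uniqueness of a solution of $P(s_B)=1$ is then immediate. For existence, $\overline\Psi^k_\omega(0)\ge\#\mathbf{C}^{k}_\omega$, which grows exponentially almost surely because some $\mathbb{I}_i$ contains at least two maps and is selected with limiting frequency $p_i>0$, so $P(0)\ge1$; and $P(s)\le r^{s}P(0)\to0$ as $s\to\infty$ forces $P(s)<1$ eventually. The intermediate value theorem now yields a unique $s_B\in\R^{+}_{0}$ with $P(s_B)=1$.
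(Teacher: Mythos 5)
Your proposal follows essentially the same route as the paper: almost-sure constancy of the projection dimensions via the RGDS results of \cite{Troscheit15}, stochastic sub-/super-additivity of $\log\overline{\Psi}^{k}_{\omega}(s)$ fed into a Kingman-type theorem (the paper uses Derriennic's variant, which tolerates exactly the bounded sub-additive defect you anticipate having to control), and the two-sided bounds $\underline{\alpha}^{\epsilon}P(s)\leq P(s+\epsilon)\leq\overline{\alpha}^{\epsilon}P(s)$ for continuity, strict monotonicity and the unique root (your own estimate in fact gives $P(0)>1$, not merely $P(0)\geq 1$). The one place your intuition is inverted is the separated case, which you flag as the main obstacle: there a direct case analysis (Lemma~\ref{lma:stochAdditivity}) shows the per-word ratio is exactly $1$ or $r^{t-\overline{s}^{x}-\overline{s}^{y}}$ with $r>1$, so sub-multiplicativity holds \emph{surely} with zero defect, whereas the defect issue actually lives in the non-separated case, where $\gamma_{\omega}(e,g)$ can differ from $1$ only off the full-measure set on which $s^{x}=s^{y}$.
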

We note that we are taking the liberty of calling $P$ a \emph{pressure} even though it is more appropriately the \emph{exponential of pressure}. Similarly, if $\overline\psi_\omega^s(e\odot g)=\overline\psi_\omega^s(e)\overline\psi_{\sigma^k\omega}^s(g)$ we may refer to the random variable $\overline\psi_\omega^s(e)$ as additive, where we should formally refer to the logarithm of the random variable as additive.
Section~\ref{proofs} contains the proof of above lemma and our main result for the box-counting dimension of $1$-variable random box-like self-affine carpets.

\begin{figure}[t]
\begin{center}
\setlength{\fboxsep}{-1.5pt}
\fbox{\includegraphics[width=0.30\textwidth]{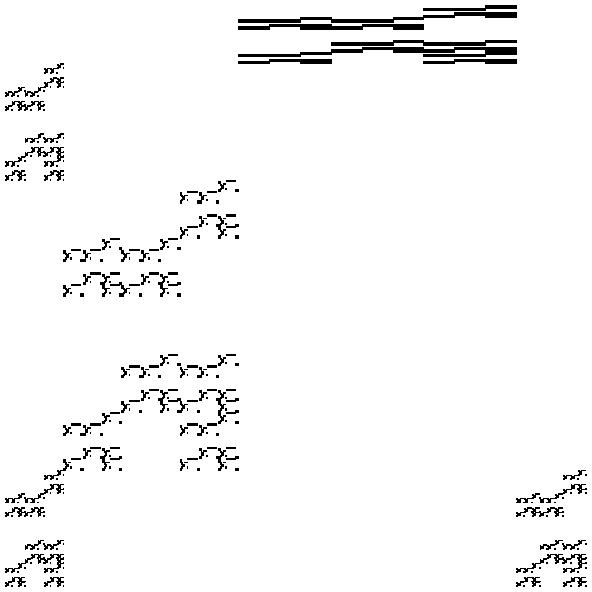}}\hfill
\fbox{\includegraphics[width=0.30\textwidth]{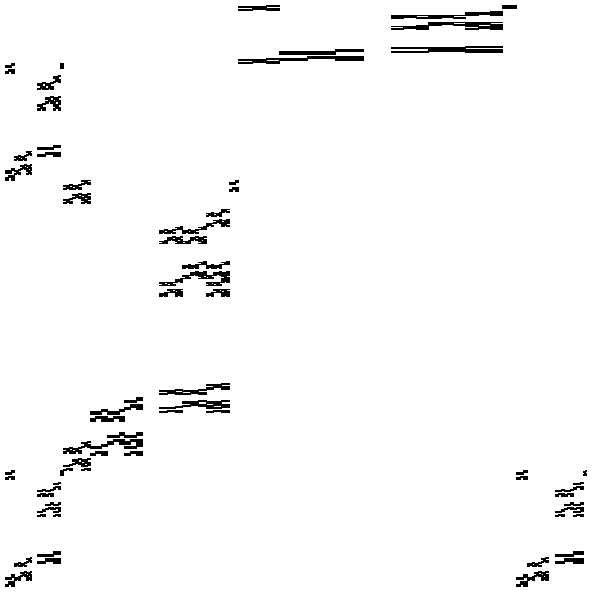}}\hfill
\fbox{\includegraphics[width=0.30\textwidth]{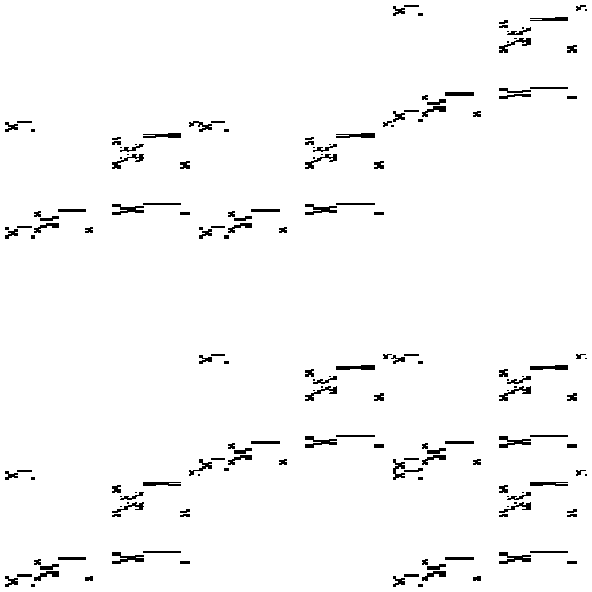}}
\caption{Three random realisations using the maps in Figure~\ref{BedfordFengFraserPic}.}
\label{RandomPic1}
\end{center}
\end{figure}

\begin{theo}\label{thm:mainTheo}
Let $(\mathbb{I},\mathbf{p})$ be a box-like self-affine RIFS that satisfies the UORC. Let $F_{\omega}$ be the associated $1$-variable random box-like self-affine carpet. Then
\begin{equation}\label{mainEq}
\dim_B F_\omega=s_{B},
\end{equation}
for almost every $\omega\in\Lambda^{\N}$, where $s_{B}$ is the unique solution to $P (s_{B})=1$.
\end{theo}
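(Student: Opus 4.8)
\emph{Proof strategy.} The plan is to estimate the grid-counting function $M_\delta(F_\omega)$ and to prove that $\log M_\delta(F_\omega)/(-\log\delta)\to s_B$ for almost every $\omega$, which identifies both box-counting dimensions with $s_B$. I begin with a reduction. The projections $\pi_xF_\omega$ and $\pi_yF_\omega$ are attractors of self-similar $1$-variable random graph-directed systems, so by Troscheit~\cite{Troscheit15} their box-counting dimensions exist almost surely and, being shift-invariant, equal deterministic constants $s^x,s^y$ for every $F_{\sigma^k\omega}$ simultaneously. Hence for almost every $\omega$ we have $\overline s(e,F_\omega)=\underline s(e,F_\omega)=:s(e)$ for all $e$, with $s(e)\in\{s^x,s^y\}$ depending only on the orientation of $f(e,\Delta)$, so that $\overline\psi_\omega^s=\underline\psi_\omega^s=:\psi_\omega^s$, $\overline P_\omega=\underline P_\omega=P$, and it suffices to work with the single pressure of Lemma~\ref{lma:spressure}. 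For $\delta>0$ let $\Xi_\delta$ be the antichain of words $e\in\bigcup_k\mathbf C^k_\omega$ that are minimal subject to $\alpha_m(e)\le\delta$. Writing $\lambda_{\min},\lambda_{\max}$ for the extreme contraction ratios, every $e\in\Xi_\delta$ satisfies $\lambda_{\min}\delta\le\alpha_m(e)\le\delta$ and has length in a band $k_{\min}(\delta)\le|e|\le k_{\max}(\delta)$ with $k_{\min},k_{\max}\asymp-\log\delta$; moreover $\{f(e,\Delta):e\in\Xi_\delta\}$ covers $F_\omega$, and the part of $F_\omega$ in $f(e,\Delta)$ is the affine copy $f(e,F_{\sigma^{|e|}\omega})$.

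\emph{Upper bound.} I would cover each rectangle by approximate squares. Since the short side of $f(e,\Delta)$ is comparable to $\delta$, the number of $\delta$-mesh squares meeting $f(e,F_{\sigma^{|e|}\omega})$ is at most a constant times $N_{\delta/\alpha_M(e)}(\pi_eF_{\sigma^{|e|}\omega})\le(\alpha_M(e)/\delta)^{s(e)+\eta}$ for every $\eta>0$ and all small $\delta$, using that the projection has box dimension $s(e)$. As $\alpha_m(e)\asymp\delta$ one checks $(\alpha_M(e)/\delta)^{s(e)}\asymp\delta^{-s_B}\psi_\omega^{s_B}(e)$, so summation gives $M_\delta(F_\omega)\lesssim\delta^{-s_B-\eta}\sum_{e\in\Xi_\delta}\psi_\omega^{s_B}(e)$. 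Since the antichain elements of a fixed length $k$ form a subset of all length-$k$ words, $\sum_{e\in\Xi_\delta}\psi_\omega^{s_B}(e)\le\sum_{k=k_{\min}}^{k_{\max}}\Psi_\omega^k(s_B)$; as $P(s_B)=1$ forces $\Psi_\omega^k(s_B)=e^{o(k)}$ with $k\asymp-\log\delta$, this sum is $e^{o(-\log\delta)}$. Letting $\eta\to0$ yields $\overline\dim_B F_\omega\le s_B$.

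\emph{Lower bound.} This is where the UORC enters. Under the UORC the rectangles $f(e,\Delta)$, $e\in\Xi_\delta$, have pairwise disjoint interiors, so distinct approximate squares overlap boundedly and each $\delta$-mesh square is charged to only finitely many rectangles. Bounding the number of mesh squares meeting $f(e,F_{\sigma^{|e|}\omega})$ below by a constant times $N_{\delta/\alpha_M(e)}(\pi_eF_{\sigma^{|e|}\omega})\ge(\alpha_M(e)/\delta)^{s(e)-\eta}$ gives $M_\delta(F_\omega)\gtrsim\delta^{-s_B+\eta}\sum_{e\in\Xi_\delta}\psi_\omega^{s_B}(e)$. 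For the matching lower estimate of the antichain sum, note every length-$k_{\max}$ word has a unique prefix in $\Xi_\delta$; writing such a word as $e\odot g$ and using sub-multiplicativity of $\psi$ underlying Lemma~\ref{lma:spressure} gives $\Psi_\omega^{k_{\max}}(s_B)\le\big(\max_{e}\Psi_{\sigma^{|e|}\omega}^{k_{\max}-|e|}(s_B)\big)\sum_{e\in\Xi_\delta}\psi_\omega^{s_B}(e)$, and since $P(s_B)=1$ also forces $\Psi_\omega^{k_{\max}}(s_B)\ge e^{-o(k_{\max})}$, we obtain $\sum_{e\in\Xi_\delta}\psi_\omega^{s_B}(e)\ge e^{-o(-\log\delta)}$. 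Hence $\underline\dim_B F_\omega\ge s_B$, and together with the upper bound $\dim_B F_\omega=s_B$ almost surely.

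\emph{Main obstacle.} I anticipate the crux to be uniformity along the orbit. The per-rectangle estimate $N_{\delta/\alpha_M(e)}(\pi_eF_{\sigma^{|e|}\omega})\approx(\alpha_M(e)/\delta)^{s(e)}$ and the completion bound $\Psi_{\sigma^{|e|}\omega}^{k_{\max}-|e|}(s_B)=e^{o(-\log\delta)}$ must hold simultaneously for the growing family of rectangles in $\Xi_\delta$ and all shifts $\sigma^{|e|}\omega$, whereas a priori the convergences $\log N_r/(-\log r)\to s(e)$ and $(\Psi_\omega^k(s))^{1/k}\to1$ are only pointwise. Controlling these error terms uniformly---presumably by expressing the projection counting through its own sub-additive pressure and applying a uniform Kingman- or Egorov-type bound along the orbit---together with the overlap bookkeeping forced by the UORC in the lower bound, is the heart of the argument; by contrast, the passage from the ordinary level sums, governed by $P(s_B)=1$, to the staggered antichain $\Xi_\delta$ is comparatively routine given that the band of lengths is proportional to $-\log\delta$.
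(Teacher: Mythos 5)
Your outline is essentially the paper's: the same $\delta$-stopping $\Xi_{\omega}^{\delta}$, the same reduction of the projections to self-similar RGDS attractors with almost surely constant box dimensions, the same role of the UORC (bounded overlap of the stopped rectangles with mesh squares) in the lower bound, and an upper bound that is a harmless variant of the paper's (you work at $s_{B}$ using $\overline{\Psi}_{\omega}^{k}(s_{B})=e^{o(k)}$, the paper works at $s_{B}+\epsilon$ where $\sum_{k}\overline{\Psi}_{\omega}^{k}(s_{B}+\epsilon/2)$ converges geometrically). The gap is in the lower bound, precisely at the point you label the ``main obstacle'', and it is not a deferrable uniformity technicality: it is the content of the paper's Lemma~\ref{complicatedLowerLemma}. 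Your inequality $\overline{\Psi}_{\omega}^{k_{\max}}(s_{B})\leq\bigl(\max_{e\in\Xi_{\omega}^{\delta}}\overline{\Psi}_{\sigma^{|e|}\omega}^{k_{\max}-|e|}(s_{B})\bigr)\sum_{e\in\Xi_{\omega}^{\delta}}\overline{\psi}_{\omega}^{s_{B}}(e)$ is fine (it only uses sub-multiplicativity), but to extract $\sum_{e\in\Xi_{\omega}^{\delta}}\overline{\psi}_{\omega}^{s_{B}}(e)\geq e^{-o(-\log\delta)}$ you must show $\max_{e\in\Xi_{\omega}^{\delta}}\overline{\Psi}_{\sigma^{|e|}\omega}^{k_{\max}-|e|}(s_{B})=e^{o(-\log\delta)}$. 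Since the band $k_{\max}-k_{\min}$ is itself proportional to $-\log\delta$ (unless all contraction ratios coincide), both $|e|$ and $k_{\max}-|e|$ grow like $-\log\delta$, so you are demanding Kingman-type convergence simultaneously along the growing family of shifted origins $\sigma^{|e|}\omega$; pointwise convergence at each fixed shift gives nothing here, the trivial bound on each factor is $e^{O(k_{\max}-|e|)}$, and for $t<s^{x}+s^{y}$ there is no super-multiplicativity to transfer control from $\overline{\Psi}_{\omega}^{k_{\max}}$ down to the factors --- indeed $\overline{\psi}_{\omega}^{t}(e\odot g)/(\overline{\psi}_{\omega}^{t}(e)\overline{\psi}_{\sigma^{|e|}\omega}^{t}(g))=r^{t-s^{x}-s^{y}}$ with $r=\alpha_{M}/\alpha_{m}$ possibly exponentially large in $|e|$ (cases (ii)--(iii) of Lemma~\ref{lma:stochAdditivity}).

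The paper avoids exactly this by never asking for control at every shift: Lemma~\ref{complicatedLowerLemma} encodes the stopping in the infinite matrix $\mathcal{M}_{\omega}^{\xi}(t)$, exploits the corridor structure ($(\mathcal{M}_{\omega}^{\xi}(t))_{i,j}=0$ outside $i\leq j\leq n_{\max}(\xi)i$) and the bounded row-to-row growth~(\ref{arbitraryIncrease}) to compare the row sum $L_{\omega}^{\delta}(t)$ with a single column sum $\overline{\Psi}_{\omega}^{j_{\max}}(t)$ at one maximising index, and then invokes $(\overline{\Psi}_{\omega}^{j}(t))^{1/j}\to P(t)$ only at the base point $\omega$; working at $t=s_{B}-\epsilon/2$, where $P(t)>1$, it concludes $L_{\omega}^{\delta}(t)\geq1$, which is all the lower bound needs. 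The same uniformity question resurfaces in your per-rectangle estimate $N_{\delta/\alpha_{M}(e)}(\pi_{e}F_{\sigma^{|e|}\omega})\approx(\alpha_{M}(e)/\delta)^{s(e)\pm\eta}$ with a constant uniform over all $e\in\Xi_{\omega}^{\delta}$ (the paper itself is terse on this point, using a single $C_{\omega}$). So the skeleton is correct, but the central lemma that makes the lower bound close is missing rather than merely postponed, and the route you sketch for it (uniform Kingman along the orbit) is not obviously repairable as stated.
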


Applying Lemma~\ref{lma:stochAdditivity}, we get the following interesting corollary.
\begin{cor}\label{cor:reducedEqStochAd}
Let $(\mathbb{I},\mathbf{p})$ be a box-like self-affine RIFS that satisfies the UORC and is of the separated type, with $\alpha_{M}(e)=\lvert\pi_{x}f(e,\Delta)\rvert$ for all $e\in\mathbb{I}_{i}\in\mathbb{I}$. Let $F_{\omega}$ be the associated $1$-variable random box-like carpet.
Then $\overline{\psi}_{\omega}^{t}$ is (stochastically) additive and the box-counting dimension of $F_{\omega}$ is almost surely given by the unique $s_{B}$ such that,
\begin{equation}\label{shortenedEq}
\exp\E\left(\log \sum_{e\in W_{\omega_{1}}}\overline\psi_{\omega}^{s_{B}}(e)\right)=1.
\end{equation}
\end{cor}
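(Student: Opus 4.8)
The plan is to combine Theorem~\ref{thm:mainTheo}, which already identifies $\dim_B F_\omega$ as the unique solution $s_B$ of $P(s_B)=1$, with the additivity of $\overline\psi_\omega^t$ in this special case. The whole point is that under the separated hypothesis with $\alpha_M(e)=\lvert\pi_x f(e,\Delta)\rvert$ for every generator $e$, the function $\overline s(e,F_\omega)$ reduces to the constant $\overline s^x(F_\omega)=\overline{\dim}_B(\pi_x F_\omega)$, so that the singular value function decouples nicely and the pressure $P(s)$ acquires a closed multiplicative form whose defining equation $P(s)=1$ can be rewritten as \eqref{shortenedEq}.

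First I would invoke Lemma~\ref{lma:stochAdditivity} to conclude that $\overline\psi_\omega^t$ is stochastically additive, i.e.\ $\overline\psi_\omega^t(e\odot g)=\overline\psi_\omega^t(e)\,\overline\psi_{\sigma^k\omega}^t(g)$ where $k=|e|$. The key mechanism is that in the separated case every map has the same diagonal orientation, so the long axis is always the $x$-axis (by hypothesis $\alpha_M(e)=\lvert\pi_x f(e,\Delta)\rvert$), and hence $\overline s(e,F_\omega)$ equals the single value $\overline s^x(F_\omega)$ for all $e$, rather than depending on the orientation of the individual rectangle. Because the projected system $\pi_x F_\omega$ is a self-similar $1$-variable RGDS, its projection dimension is $\sigma$-invariant almost surely, which is exactly what makes the exponents multiply cleanly across concatenation and forces additivity rather than mere sub-additivity.

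Second, using additivity I would show that $\log\overline\Psi_\omega^k(s)$ is (up to the vanishing defect) a sum of $k$ stationary, integrable terms, so that by the strong law of large numbers
\[
\frac1k\log\overline\Psi_\omega^k(s)\longrightarrow \E\!\left(\log\sum_{e\in W_{\omega_1}}\overline\psi_\omega^s(e)\right)
\]
for $\mu$-almost every $\omega$. Exponentiating and comparing with the definition of $\overline P_\omega(s)$ yields
\[
P(s)=\exp\E\!\left(\log\sum_{e\in W_{\omega_1}}\overline\psi_\omega^s(e)\right),
\]
where I have used Lemma~\ref{lma:spressure} to identify the almost sure limit with the deterministic pressure $P(s)$. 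Substituting $s=s_B$ and invoking $P(s_B)=1$ from \eqref{unityPressure} gives precisely \eqref{shortenedEq}; Theorem~\ref{thm:mainTheo} then identifies this $s_B$ as the almost sure box-counting dimension of $F_\omega$.

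The main obstacle I anticipate is the rigorous passage from the $\limsup$ defining $\overline P_\omega$ to a genuine limit via the law of large numbers. This requires verifying integrability of $\log\sum_{e\in W_{\omega_1}}\overline\psi_\omega^s(e)$ (finiteness is clear since the sum is bounded away from $0$ and $\infty$ for fixed $s$, using that each $\mathbb I_i$ is finite and contractions are uniformly bounded below), and, more delicately, controlling the sub-additive defect so that it does not contribute to the exponential growth rate. In the strictly additive case the defect is identically zero, which removes this difficulty entirely and is exactly why the clean form \eqref{shortenedEq} emerges only under the separated hypothesis; the care needed is therefore mostly in confirming that Lemma~\ref{lma:stochAdditivity} applies and that the almost sure limit from Lemma~\ref{lma:spressure} coincides with the expectation above.
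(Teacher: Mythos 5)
Your proposal is correct and takes essentially the same route as the paper: additivity of $\overline\psi_\omega^t$ (from Lemma~\ref{lma:stochAdditivity}, since the separated hypothesis with $\alpha_M(e)=\lvert\pi_x f(e,\Delta)\rvert$ forces $\overline s(e,F_\omega)=\overline s^x(F_\omega)$ for all concatenations) turns $\log\overline\Psi_\omega^k(s)$ into an ergodic sum, Birkhoff's theorem identifies the limit with the expectation in \eqref{shortenedEq}, and so $P(s_B)=1$, whence Theorem~\ref{thm:mainTheo} applies. The paper's proof is exactly this argument.
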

\begin{proof}
	First note that additivity of $\overline\psi_\omega^s(e)$ implies
	\[
	\log\overline\Psi_\omega^k(s)=\log\sum_{e\in\mathbf{C}^{k}_{\omega}}\overline\psi_{\omega}^{s_{B}}(e)
	=\log\left[\Bigg(\sum_{e\in W_{\omega_{1}}}\overline\psi_{\omega}^{s_{B}}(e)\Bigg)\dots \Bigg(\sum_{e\in W_{\omega_{k}}}\overline\psi_{\sigma^{k-1}\omega}^{s_{B}}(e)\Bigg)\right].
	\]
	Therefore 
	\[
	\lim_{k\to\infty}\frac{1}{k}\log\overline\Psi_\omega^k(s)
	=\lim_{k\to\infty}\frac{1}{k}\sum_{i=1}^k\log\left(\sum_{e\in W_{\omega_{i}}}\overline\psi_{\sigma^{i-1}\omega}^{s_{B}}(e)\right)
	=\E\left(\log \sum_{e\in W_{\omega_{1}}}\overline\psi_{\omega}^{s_{B}}(e)\right),
	\]
	almost surely by Birkhoff's Ergodic Theorem. For $s_B$ satisfying (\ref{shortenedEq}) this limit equals $0$ almost surely and hence $P(s_B))=1$. We can now apply Theorem~\ref{thm:mainTheo} and obtain the desired conclusion.
\end{proof}
Introducing further conditions, we can express the box-counting dimension in terms of the individual attractors. The following corollary to Corollary~\ref{cor:reducedEqStochAd} extends the box-counting dimension result from Gui and Li~\cite{Gui08} which states that for $1$-variable Bedford-McMullen carpets with subdivisions $n,m$ the almost sure box-counting dimension is the mean of the box-counting dimensions of the corresponding deterministic attractors. 
\begin{cor}\label{ExpectationCor}
Let $(\mathbb{I},\mathbf{p})$ be a box-like self-affine RIFS that satisfies the UORC and is of the separated type, with $\alpha_{M}(e)=\lvert\pi_{x}f(e,\Delta)\rvert$ for all $e\in\mathbb{I}_{i}\in\mathbb{I}$. Let $F_{\omega}$ be the associated $1$-variable random box-like carpet. Assume further that
\begin{enumerate}
\item there exists $\eta\in(0,1)$ s.t.\ $\alpha_{m}(e)=\eta$ for all $e\in\mathbb{I}_{i}$ and $i\in\Lambda$,
\item $s^{x}(F_{\omega})=\sum_{i\in\Lambda}p_{i}s^{x}(F_{i,i,\dots})$ almost surely,
\item and the following equality holds:
\[
\E\left(\log \sum_{e\in W_{\omega_{1}}}\alpha_{M}(e)^{s^{x}(F_{\omega})}\right)=\E\left(\log \sum_{e\in W_{\omega_{1}}}\alpha_{M}(e)^{s^{x}(F_{\omega_{1},\omega_{1},\dots})}\right).
\]
\end{enumerate}
Then, almost surely,
\begin{equation}\label{expectationOfIndivid}
\dim_{B}F_{\omega}=\sum_{i\in\Lambda}p_{i}\dim_{B}F_{i,i,\dots}=\E(\dim_{B}F_{\omega_1,\omega_1,\dots}).
\end{equation}
\end{cor}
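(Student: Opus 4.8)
The plan is to reduce everything to the defining equation from Corollary~\ref{cor:reducedEqStochAd} and then to use conditions (1)--(3) to match the $1$-variable exponent $s_B$ against the $p_i$-weighted sum of the deterministic exponents $\dim_B F_{i,i,\dots}$.

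First I would record the simplification of the singular value function that the hypotheses force. Since we are in the separated case with $\alpha_M(e)=\lvert\pi_x f(e,\Delta)\rvert$ for every $e$, the longest side of each rectangle $f(e,\Delta)$ is parallel to the $x$-axis, so $\overline s(e,F_\omega)=\overline{\dim}_B(\pi_x F_\omega)=s^x(F_\omega)$ \emph{independently of $e$} (the common value exists, the projection being a self-similar $1$-variable RGDS). Combined with condition (1) this factorises the modified singular value function as
\[
\overline\psi_\omega^{s_B}(e)=\alpha_M(e)^{s^x(F_\omega)}\,\eta^{\,s_B-s^x(F_\omega)}.
\]
Summing over $e\in W_{\omega_1}$, taking logarithms, and applying the expectation in (\ref{shortenedEq}) yields the linear identity
\[
\bigl(s_B-\E[s^x(F_\omega)]\bigr)\log\eta+\E\Bigl(\log\sum_{e\in W_{\omega_1}}\alpha_M(e)^{s^x(F_\omega)}\Bigr)=0,
\]
which I denote $(\star)$. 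By condition (2), $\E[s^x(F_\omega)]=\sum_i p_i s^x(F_{i,i,\dots})$ (indeed $s^x(F_\omega)$ is almost surely this constant).

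Next I would run the identical computation for each constant realisation $\omega=(i,i,\dots)$, for which $F_\omega=F_{i,i,\dots}$ is the deterministic attractor of $\mathbb{I}_i$ and the exponent $s^x(F_{i,i,\dots})$ is deterministic. Here the expectation collapses to a single term, so $s_i:=\dim_B F_{i,i,\dots}$ solves $\eta^{\,s_i-s^x(F_{i,i,\dots})}\sum_{e\in W_i}\alpha_M(e)^{s^x(F_{i,i,\dots})}=1$, that is,
\[
s_i=s^x(F_{i,i,\dots})-\frac{1}{\log\eta}\log\sum_{e\in W_i}\alpha_M(e)^{s^x(F_{i,i,\dots})}.
\]
Taking the $p_i$-weighted sum and recognising $\sum_i p_i\log\sum_{e\in W_i}\alpha_M(e)^{s^x(F_{i,i,\dots})}$ as $\E\bigl(\log\sum_{e\in W_{\omega_1}}\alpha_M(e)^{s^x(F_{\omega_1,\omega_1,\dots})}\bigr)$ gives
\[
\sum_i p_i s_i=\sum_i p_i s^x(F_{i,i,\dots})-\frac{1}{\log\eta}\,\E\Bigl(\log\sum_{e\in W_{\omega_1}}\alpha_M(e)^{s^x(F_{\omega_1,\omega_1,\dots})}\Bigr).
\]

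Finally I would invoke condition (3) to replace the per-symbol exponent $s^x(F_{\omega_1,\omega_1,\dots})$ by the averaged exponent $s^x(F_\omega)$ in the last expectation; after this substitution, and using $\E[s^x(F_\omega)]=\sum_i p_i s^x(F_{i,i,\dots})$ from condition (2), the right-hand side is exactly the value of $s_B$ obtained by solving $(\star)$. Hence $s_B=\sum_i p_i\dim_B F_{i,i,\dots}=\E(\dim_B F_{\omega_1,\omega_1,\dots})$, and Corollary~\ref{cor:reducedEqStochAd} identifies this with $\dim_B F_\omega$. The only genuine obstacle is condition (3) itself: because $t\mapsto\alpha_M(e)^t$ is nonlinear, Jensen's inequality generally produces a strict gap between the averaged-exponent expectation appearing in $(\star)$ and the per-symbol-exponent expectation appearing in $\sum_i p_i s_i$, so without this hypothesis the two dimensions need not agree. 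Conditions (2) and (3) together are precisely what close that gap.
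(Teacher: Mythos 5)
Your proposal is correct and follows essentially the same route as the paper's proof: both start from (\ref{shortenedEq}), use condition (1) to pull out the factor $\eta^{s_B-s^x}$, condition (2) to rewrite $\E[s^x(F_\omega)]$ as $\sum_i p_i s^x(F_{i,i,\dots})$, and condition (3) to swap the averaged exponent for the per-symbol exponents before recognising the resulting expression as the $p_i$-weighted average of the deterministic dimensions. The only cosmetic difference is that you work additively with logarithms and make explicit the deterministic equation satisfied by each $\dim_B F_{i,i,\dots}$, whereas the paper manipulates the identity multiplicatively as a product of powers of $\eta$; the content is identical.
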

\begin{proof}
First note that $s^{x}(F_{\omega})$ is constant almost surely. We denote this value by  $s^{x}$ and note from (\ref{shortenedEq}),
\begin{align*}
1&=\exp\E\left(\log \sum_{e\in W_{\omega_{1}}}\alpha_{M}(e)^{s^{x}}\alpha_{m}(e)^{s_{B}-s^{x}}\right)\\
&=\exp\E\left(\log \left(\eta^{s_{B}-s^{x}}\sum_{e\in W_{\omega_{1}}}\alpha_{M}(e)^{s^{x}}\right)\right).
\end{align*}
So
\[
 \eta^{-s_{B}}=\eta^{-s^{x}}\left( \sum_{e\in W_{1}}\alpha_{M}(e)^{s^{x}}\right)^{p_{1}}\cdot\hdots\cdot\left( \sum_{e\in W_{N}}\alpha_{M}(e)^{s^{x}}\right)^{p_{N}}
 \]
but $\eta^{-s^{x}}=\eta^{-\sum_{i\in\Lambda}p_{i}s^{x}(F_{i,i,\dots})}$ almost surely and hence, almost surely,
\[
 \eta^{-s_{B}}=\left( \sum_{e\in W_{1}}(\alpha_{M}(e)/\eta)^{s^{x}(F_{1,1,\dots})}\right)^{p_{1}}\cdot\hdots\cdot\left( \sum_{e\in W_{N}}(\alpha_{M}(e)/\eta)^{s^{x}(F_{N,N,\dots})}\right)^{p_{N}}
 \]
 and
\[
s_{B}=\frac{\sum_{i\in\Lambda}p_{i}\log\left( \sum_{e\in W_{i}}(\alpha_{M}(e)/\eta)^{s^{x}(F_{i,i,\dots})}\right)}{-\log\eta}.
\]
Thus $s_{B}$ is the weighted average of $\dim_{B}F_{i,i,\dots}$.
\end{proof}
On first glance these conditions seem very restrictive. However, note that $1$-variable Bedford-McMullen carpets sharing the same $n,m$  grid subdivision satisfy these conditions (the Gui-Li case). Briefly, this is because 
\[
s^{x}(F_{i,i\dots})=\frac{-\log(\text{\em number of non-empty columns})}{\log(\text{\em column width})},
\]
and 
\[
s^{x}(F_{\omega})=\frac{-\log(\text{\em geometric mean of the number of non-empty columns})}{\log(\text{\em column width})}\;\;\text{(a.s.).}
\]
Further, these conditions are satisfied for much more general separated box-like self-affine RIFS (such as the Lalley-Gatzouras type) if all the individual attractors' projection onto the horizontal have the same box-counting dimension and they contract equally in the direction parallel to the vertical.

Note however, that letting $\alpha_{m}(e)=\eta_{i}$ for every $e\in W_{i}$ is no longer sufficient for the dimension to be the mean of the individual dimensions as Example~\ref{simpleExtensionExample} in Section~\ref{sec:examples} shows.
Another interesting consequence of Theorem~\ref{thm:mainTheo} is obtained for RIFSs such that the modified singular value function is not stochastically additive.
\begin{rem}
Let $(\mathbb{I},\mathbf{p})$ be a box-like self-affine RIFS that satisfies the UORC such that $\overline{\psi}_{\omega}^{s}(e)$ is not stochastically additive. Let $F_{\omega}$ be the associated $1$-variable random box-like carpet.
 Then the almost sure box-counting dimension of the attractor can drop below the least box-counting dimension of the individual attractors, that is there exists $(\mathbb{I},\mathbf{p})$ such that, almost surely,
\[
\dim_{B}F_{\omega}<\min_{i\in\Lambda}\dim_{B}F_{i,i,i,\dots}\,.
\]
See Example~\ref{weirdlyConvergingExample} in Section~\ref{sec:examples}.
\end{rem}

We end this section by commenting that if $s^{x}=s^{y}=1$ a.s.\ the modified singular value function coincides with the singular value function and $\dim_{B}F_{\omega}$ coincides with the natural affinity dimension. 
For the separated case with greatest contraction in the vertical direction it is sufficient to have $s^{x}=1$. 
Conversely, if $s^{x},s^{y}<1$ the almost sure box-counting dimension (and therefore the almost sure Hausdorff dimension) of $F_{\omega}$ will be strictly less than the associated affinity dimension.

\section{Results for $\infty$-variable box-like carpets}\label{inftyVarRes}

In this section we define an infinite code tree and define the $\infty$-variable attractor of a random iterated function system $(\mathbb{I},\mathbf{p})$.
We set $k_{s}=\max_{i\in\Lambda}\#\mathbb{I}_{i}$ and consider the rooted $k_{s}$-ary tree. Each node in this tree we label with a single $i\in\Lambda$, chosen independently, according to the probability vector $\mathbf{p}$. 
We denote the space of all possible labellings of the tree by $\mathcal{T}$ and refer to individual realisations picked according to the induced probability measure, described below, by $\tau\in\mathcal{T}$.
In this full tree we address vertices by which branch was taken; if $v$ is a node at level $k$ we write $v=(v_{1},v_{2},\dots,v_{k})$, with $v_{i}\in\{1,\dots,k_{s}\}$ and root node $v=(.)$.
The levels of the tree are then:
\[
\{(.)\}, \{(1),(2),\dots,(k_{s})\}, \{ (1,1),(1,2),\dots,(1,k_{s}),(2,1),\dots,(k_{s},k_{s})\},\dots\;.
\]
We write $\tau(v)\in\Lambda$ to denote the random letter for node $v$ and realisation $\tau$. Given a node $v$ we define $\sigma^{v}\tau$ to be the full subtree starting at vertex $v$, with $\sigma^{(.)}\tau=\tau$. 
There exists a natural measure $\mu$ on the collection of trees, induced by $\mathbf{p}$. Let $[\tau]_{k}$ be the collection of trees $\kappa$ such that $\tau(v)=\kappa(v)$ for all nodes $v$ in levels up to $k$. Similarly to the $1$-variable setting we refer to this as a \emph{cylinder} and note that it generates the topology of $\mathcal{T}$. The measure $\mu$ is then the unique measure on $\mathcal{T}$ such that $\mu([\tau]_{k})=p_{1}^{\rho(1,k,\tau)}p_{2}^{\rho(2,k,\tau)}\hdots p_{n}^{\rho(n,k,\tau)}$, where $\rho(i,k,\tau)$ is the number of choices of letter $i\in\Lambda$ for all nodes up to level $k$ in realisation $\tau$.

We note that in this section we relax the requirement that every IFS $\mathbb{I}_{i}$ must contain at least one map, with a single IFS consisting of two maps. We now allow an IFS to have no maps, i.e.\ $W_{i}=\varnothing$ with positive probability, but we require a non-extinction condition.
\begin{defn}\label{defn:nonextinct}
We call the RIFS $(\mathbb{L},\vec{\pi})$ \emph{non-extinguishing} if 
\[
\sum_{i\in\Lambda}p_{i}\#\mathbb{I}_{i}>1.
\]
\end{defn}
This implies that there exists positive probability that the associated attractor (defined below) is non-empty. We will later state results `conditioned on non-extinction' by which we mean `with respect to the (normalised) measure $\mu$ restricted on the set of non-extinction'.

Allowing for extinction we  have to extend the definition of the modified singular value function.

\begin{defn}
Let $e\in\beth^{\odot}$, we define the \emph{upper (random) modified singular value function} as
\[
\overline{\psi}_\tau^s(e)=\begin{cases}
\alpha_{M}(e)^{\overline{s}(e,F_{\tau})}\alpha_{m}(e)^{s-\overline{s}(e,F_{\tau})}, & \text{ if }e\neq\varnothing,
\\
0, & \text{ otherwise.}
\end{cases}
\]
Again, $\underline{\psi}_\omega^s(e)$ and ${\psi}_\omega^s(e)$ are defined analogously.
\end{defn}

Recall that $e_{i}^{j}$ is the letter representing the map $f_{i}^{j}\in\mathbb{I}_{i}$. For each full tree $\tau$ that is randomly labelled by entries in $\Lambda$, we associate another rooted labelled $k_{s}$-ary tree $\mathbf{T}_{\tau}$, where each node is labelled by an arrangement of words that describes the `coding' of the associated cylinder.

\begin{defn}\label{infinityDef1}
Let $\mathbf{T}_{\tau}$ be a labelled tree, 
we write $\mathbf{T}_{\tau}(v)$ for the label of node $v$ of the tree $\mathbf{T}_{\tau}$.
The \emph{coding tree} $\mathbf{T}_{\tau}$ is then defined inductively:
\[
\mathbf{T}_{\tau}((.))=\epsilon_{0}\text{ and }
\mathbf{T}_{\tau}(v)=\mathbf{T}_{\tau}((v_{1},\dots,v_{k}))=\mathbf{T}_{\tau}((v_{1},\dots,v_{k-1}))\odot e_{\tau(v_{k-1})}^{v_{k}}
\]
for $1\leq v_{k}\leq \#\mathbb{I}_{\tau(v_{k-1})}$ and $e_{\tau(v_{k-1})}^{v_{k}}=\varnothing$ otherwise. This `deletes' this subbranch as $\varnothing$ annihilates under multiplication.

We refer to the arrangement of all labels at the $k$-th level by 
\[
\mathbf{T}_{\tau}^{k}=\bigsqcup_{v_{1},\dots,v_{k}\in\Lambda}\mathbf{T}_{\tau}((v_{1},\dots,v_{k})).
\]
\end{defn}
We remark that the resulting tree will almost surely, conditioned on non-extinction, have an exponentially increasing number of vertices at level $k$ as $k$ increases.
We can now define the random recursive, or $\infty$-variable, box-like self-affine carpet.
\begin{defn}\label{infinityDef2}
Let $({\mathbb{I}},\mathbf{p})$ be a box-like self-affine RIFS and $\tau\in\mathcal{T}$. The \emph{$\infty$-variable box-like self-affine carpet} $F_{\tau}$ is the compact set satisfying
\[
F_{\tau}=\bigcap_{k=1}^{\infty}f(\mathbf{T}_{\tau}^{k},\Delta).
\]
\end{defn}
We note that setting up the RIFS appropriately this models reduces to self-affine fractal percolation.

We write $s^{x}$ and $s^{y}$ for the almost sure box-counting dimension of the projections of $F_{\tau}$ onto the horizontal and vertical axes. In this case the projections are $\infty$-variable RIFSs or random graph directed systems (RGDSs) in the sense of \cite{Troscheit15} (see Definition~\ref{RGDS} below) and in the non-separated case $s^{x}=s^{y}$ almost surely.\index{random graph directed system}  

\begin{theo}\label{mainInfinityResult}
Let $({\mathbb{I}},\mathbf{p})$ be a box-like self-affine RIFS that satisfies the UORC and is non-extinguishing.
Let $F_{\tau}$ be the associated $\infty$-variable random self-affine box-like carpet.
The box-counting dimension of $F_{\tau}$, conditioned on non-extinction, is almost surely given by the unique $s_{B}$ satisfying
\begin{equation}\label{subadditiveExpecationEqui}
\E\left(\left(\sum_{e\in\mathbf{T}_{\tau}^{k}}\overline{\psi}_{\tau}^{s_{B}}(e)\right)^{1/k}\right)\to1\text{ as }k\to\infty.
\end{equation}
\end{theo}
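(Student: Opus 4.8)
\textbf{The plan} is to establish Theorem~\ref{mainInfinityResult} by bracketing $\dim_B F_\tau$ between the quantity $s_B$ defined via~\eqref{subadditiveExpecationEqui} from above and below, mirroring the strategy that presumably underlies the $1$-variable Theorem~\ref{thm:mainTheo} but now working with the branching structure of the coding tree $\mathbf{T}_\tau$. The central combinatorial fact is that, under the UORC, the rectangles $\{f(e,\Delta) : e\in\mathbf{T}_\tau^k\}$ have interiors that are essentially disjoint, so covering $F_\tau$ at a fixed scale $\delta$ reduces to counting, for each word $e$ at an appropriate stopping level, the number of $\delta$-mesh squares meeting $f(e,\Delta)$. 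Because $f(e,\Delta)$ is an axis-aligned rectangle with sides $\alpha_M(e)$ and $\alpha_m(e)$, and because the projection of $F_\tau$ onto the long axis has box-dimension $\overline{s}(e,F_\tau)$, the number of $\delta$-squares needed to cover $F_\tau\cap f(e,\Delta)$ is comparable to $\alpha_M(e)^{\,\overline{s}(e,F_\tau)}\alpha_m(e)^{\,s-\overline{s}(e,F_\tau)}\delta^{-s}$ at $s=\dim_B$; summing over $e$ in a level set identifies $\overline{\Psi}_\tau^k(s)=\sum_{e\in\mathbf{T}_\tau^k}\overline{\psi}_\tau^s(e)$ as the natural covering functional.

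First I would make rigorous the reduction to a stopping-set of words all having comparable diameter (an $\alpha_m\approx\delta$ cut through the tree), exactly as in the deterministic Fraser/Feng--Wang arguments, and invoke the equivalence of $N_\delta$ with the mesh-counting function $M_\delta$ noted after the box-dimension definition. This yields, for the \emph{upper} bound, that $\overline{\dim}_B F_\tau\le s$ whenever $\overline{\Psi}_\tau^k(s)$ does not grow, and for the \emph{lower} bound the reverse inequality; the UORC guarantees the lower bound is not destroyed by overlaps. The projection dimensions $s^x,s^y$ entering $\overline{s}(e,F_\tau)$ are almost surely constant by the RGDS theory of Troscheit~\cite{Troscheit15} cited in the introduction, so $\overline{s}(e,F_\tau)$ depends only on the orientation recorded by $e$, which keeps $\overline{\psi}_\tau^s$ a genuinely additive-type weight along branches.

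The heart of the proof is then the probabilistic analysis of $\overline{\Psi}_\tau^k(s)$, and here the $\infty$-variable setting departs decisively from the $1$-variable one: the sum is over an exponentially growing random tree rather than a single branch, so one cannot use the Birkhoff averaging of Corollary~\ref{cor:reducedEqStochAd}. Instead I would set up $Z_k(s):=\sum_{e\in\mathbf{T}_\tau^k}\overline{\psi}_\tau^s(e)$ as a \emph{branching} recursion: decomposing the tree at its root gives $Z_{k}(s)=\sum_{j} \overline{\psi}_\tau^s(e_{\tau((.))}^{\,j})\, Z_{k-1}^{(j)}(s)$, where the $Z_{k-1}^{(j)}$ are independent copies governed by the independent subtrees $\sigma^{(j)}\tau$. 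The quantity $\bigl(\E\, Z_k(s)\bigr)$ satisfies a clean multiplicative recursion whose exponential growth rate is determined by $\E\bigl(\sum_{e\in W_{\omega_1}}\overline{\psi}^s(e)\bigr)$; but~\eqref{subadditiveExpecationEqui} is stated in terms of $\E\bigl(Z_k(s)^{1/k}\bigr)\to1$, i.e.\ the almost-sure growth rate $\lim_k Z_k(s)^{1/k}$, which by the stochastic sub-additivity recorded in Definition~\ref{defn:stochadd} and a Kingman-type subadditive ergodic argument exists almost surely and is deterministic. The strict monotonicity and continuity in $s$ (as in Lemma~\ref{lma:spressure}) then pin down the unique critical $s_B$.

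\textbf{The main obstacle} I anticipate is controlling the almost-sure growth rate $\lim_k Z_k(s)^{1/k}$ and reconciling it with the expectation form in~\eqref{subadditiveExpecationEqui}. On a supercritical random tree the weighted branching sum $Z_k(s)$ is a Mandelbrot-type martingale (after normalising by its mean), and one must rule out the cases where the almost-sure growth rate strictly undercuts the annealed rate $\log\E(\cdots)$ and where the martingale degenerates to zero on the survival event. The sub-additive defect from Definition~\ref{defn:stochadd}, arising because $\overline{s}(e,F_\tau)$ uses the global projection dimension rather than a per-branch quantity, must be shown to be subexponential so that Kingman's subadditive ergodic theorem delivers a genuine limit; conditioning on non-extinction (guaranteed positive by the non-extinguishing hypothesis, Definition~\ref{defn:nonextinct}) is essential here, since on extinction $Z_k\equiv0$. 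Threading the martingale-nondegeneracy argument through the random-recursive covering estimates, uniformly in the stopping level, is where I expect the real work to lie.
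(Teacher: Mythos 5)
Your covering skeleton is sound and matches the paper's: the stopping set with $\alpha_m\approx\delta$, the mesh-counting function $M_\delta$ together with the UORC for the lower bound, the almost sure constancy of the projection dimensions via the RGDS results of~\cite{Troscheit15}, and the identification of $Y_k^s=\sum_{e\in\mathbf{T}_\tau^k}\overline{\psi}_\tau^s(e)$ as the covering functional are all the right ingredients. You also correctly isolate the root decomposition of this sum over independent subtrees as the key structural fact.

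The genuine gap is in the probabilistic core. You propose to extract the almost sure growth rate of $Y_k^s$ from ``the stochastic sub-additivity recorded in Definition~\ref{defn:stochadd} and a Kingman-type subadditive ergodic argument''. Definition~\ref{defn:stochadd} and the Kingman/Derriennic theorem are built around a single measure-preserving shift $\sigma$ acting on a linearly ordered index set; that is exactly how the paper handles the $1$-variable case (Lemma~\ref{lma:spressure}), but it has no analogue here. In the $\infty$-variable setting $Y_{k+l}^s$ decomposes as $\sum_{e\in\mathbf{T}_\tau^k}\overline{\psi}_\tau^s(e)$ times an independent copy of $Y_l^s$ on the subtree rooted at $e$ --- a sum over many independent subtrees, not a composition with one shifted realisation --- so there is no inequality of the form $X_{k+l}(\tau)\leq X_k(\tau)+X_l(\sigma^k\tau)$ to feed into Kingman, and no ergodic transformation with respect to which the limit would be almost surely constant. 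The paper's replacement is twofold. For the upper bound, the identity $\E(Y_{k+l}^t\mid\mathcal{F}^k)=c_{k,l}(\tau)\,Y_k^t\,\E(Y_l^t)$ with $0<c_{k,l}\leq1$ (the correction $c_{k,l}$, caused by the orientation of the long side flipping along a branch, is shown to tend to $1$ in Lemma~\ref{martingaleFactorConvergesTo1}) makes $\{Y_{ql}^t\}_q$ a supermartingale as soon as $\E(Y_l^t)<1$, forcing exponential decay almost surely. For the lower bound, the almost sure exponential growth is obtained not from martingale non-degeneracy but from a zero--one law: conditioned on non-extinction every surviving branch splits, the subtrees at a splitting node are independent and identically distributed, so the probability $x$ of the exponential-growth event satisfies $x\geq1-(1-x)^2$, hence $x\in\{0,1\}$, and a crude deviation bound rules out $0$. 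You correctly name the quenched-versus-annealed degeneracy issue as the main obstacle, but the tool you reach for would not apply, and this branching zero--one-law mechanism (or an $\mathcal{L}^2$-bounded-martingale argument in the additive case, which the paper deliberately avoids in order to work in greater generality) is the missing idea.
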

\begin{cor}
Let $({\mathbb{I}},\mathbf{p})$ be a box-like self-affine RIFS that satisfies the UORC and is non-extinguishing.
Let $F_{\tau}$ be the associated $\infty$-variable random self-affine box-like carpet.
If the modified singular value function is additive, e.g.\  if $\alpha_{M}(e\odot g)=\alpha_{M}(e)\alpha_{M}(g)$, we have, conditioned on non-extinction, $\dim_{B}F_{\omega}=s_{B}$ almost surely, where
\begin{equation}\label{additiveinfty}
\E\left(\sum_{e\in\mathbb{I}_{i}}\overline{\psi}_{\tau}^{s_{B}}(e)\right)=1.
\end{equation}
\end{cor}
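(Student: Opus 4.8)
The plan is to deduce the corollary from Theorem~\ref{mainInfinityResult} by simplifying its characterising limit under the additivity hypothesis. Throughout write $Z_k(s)=\sum_{e\in\mathbf{T}_\tau^k}\overline\psi_\tau^s(e)$ and $m(s)=\E\bigl(\sum_{e\in\mathbb{I}_i}\overline\psi_\tau^s(e)\bigr)$, so that (\ref{additiveinfty}) reads $m(s_B)=1$. First I would record two structural facts. For a single letter $e=e_i^j$ the exponent $\overline s(e,F_\tau)$ equals $s^x$ or $s^y$, both almost sure constants; hence the single-step weights $\overline\psi_\tau^s(e)$ are deterministic and $m(s)=\sum_{i\in\Lambda}p_i\sum_{e\in W_i}\overline\psi^s(e)$ is simply the mean offspring weight of the associated weighted branching process. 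Since both $s\mapsto m(s)$ and $s\mapsto\lim_k\E(Z_k(s)^{1/k})$ are continuous and strictly decreasing (each factor $\overline\psi^s(e)$ is strictly decreasing in $s$ because $\alpha_m(e)<1$) and cross the value $1$ at most once, it suffices to prove that the box dimension $s_B$ supplied by Theorem~\ref{mainInfinityResult} satisfies $m(s_B)=1$; uniqueness then identifies it with the solution of (\ref{additiveinfty}). The displayed remark that $\alpha_M(e\odot g)=\alpha_M(e)\alpha_M(g)$ is simply the typical way additivity is realised: no axis-swaps occur, whence $\overline s\equiv s^x$.

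Next I would exploit additivity to compute the annealed growth exactly. Writing $\overline\psi_\tau^s(e\odot g)=\overline\psi_\tau^s(e)\,\overline\psi_{\sigma^v\tau}^s(g)$ for the edge $e$ leading to a node $v$, and using the independence of the subtrees $\{\sigma^v\tau\}$ issuing from the children of the root, one obtains the one-step recursion $\E Z_k(s)=m(s)\,\E Z_{k-1}(s)$ with $\E Z_0(s)=\overline\psi^s(\epsilon_0)=1$, whence $\E Z_k(s)=m(s)^k$ for every $k$. In particular $M_k:=Z_k(s_B)/m(s_B)^k$ is a non-negative, mean-one martingale for the filtration generated by the successive levels of the tree, namely the additive (Biggins) martingale of the cascade. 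Jensen's inequality applied to $x\mapsto x^{1/k}$ immediately yields the upper bound $\E(Z_k(s_B)^{1/k})\le(\E Z_k(s_B))^{1/k}=m(s_B)$, so that $\limsup_k\E(Z_k(s_B)^{1/k})\le m(s_B)$.

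The heart of the matter is the matching lower bound, for which I would show that the martingale does not degenerate and that the quenched growth rate of $Z_k$ equals the annealed rate $m(s_B)$. The decisive observation is that $s_B=\dim_B F_\tau\ge\max\{s^x,s^y\}=\max_e\overline s(e,F_\tau)$ almost surely, since orthogonal projection onto a line cannot increase box dimension; consequently $s_B-\overline s(e,F_\tau)\ge0$ and, as $\alpha_M(e),\alpha_m(e)<1$, every single-step weight satisfies $\overline\psi^{s_B}(e)=\alpha_M(e)^{\overline s}\alpha_m(e)^{s_B-\overline s}\le1$. With only finitely many maps and all weights bounded by $1$, the Kesten--Stigum/Biggins $L\log L$-and-derivative condition for the additive martingale of a weighted branching process is satisfied, placing the cascade in its high-temperature phase; hence $M_k$ is uniformly integrable, $M_k\to M_\infty$ with $\E M_\infty=1$, and $M_\infty>0$ almost surely on the event of non-extinction. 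Therefore, conditioned on non-extinction, $Z_k(s_B)^{1/k}=m(s_B)\,M_k^{1/k}\to m(s_B)$ almost surely (a convergent positive sequence has $k$-th root tending to $1$), and Fatou's lemma together with the Jensen upper bound forces $\E(Z_k(s_B)^{1/k}\mid\text{non-ext})\to m(s_B)$. By Theorem~\ref{mainInfinityResult} this limit is $1$, so $m(s_B)=1$, which is exactly (\ref{additiveinfty}).

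I expect the main obstacle to be this non-degeneracy step, that is, ruling out a \emph{frozen} regime in which the almost sure growth rate of $Z_k$ would be strictly smaller than the annealed rate $m(s_B)$ and the equivalence would fail. This is precisely what the bound $\overline\psi^{s_B}(e)\le1$ prevents: it confines $s_B$ to the high-temperature phase of the cascade, so that the additive martingale remains uniformly integrable and the quenched and annealed exponential growth rates coincide. Some care is also needed in handling the conditioning on non-extinction (on extinction $\mathbf{T}_\tau^k=\varnothing$ and $Z_k=0$) and in confirming that the single-step weights are genuinely deterministic so that $m(s)$ is a bona fide offspring mean; both are routine once the almost sure constancy of $s^x$ and $s^y$ is invoked.
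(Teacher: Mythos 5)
Your strategy can be made to work, but it is a genuinely different, and much heavier, route than the one the paper intends. The paper's argument is essentially two lines: additivity means the correction factor $c_{k,l}(\tau)$ in Lemma~\ref{ItsAMartingale} is identically $1$, so taking expectations in (\ref{infinityQuasiMartingale}) gives $\E(Y_{k+l}^{s})=\E(Y_{k}^{s})\,\E(Y_{l}^{s})$ and hence $\E(Y_{k}^{s})=m(s)^{k}$ with $m(s)=\E\bigl(\sum_{e\in\mathbb{I}_{i}}\overline{\psi}_{\tau}^{s}(e)\bigr)$. Since the dichotomies that actually drive the proof of Theorem~\ref{mainInfinityResult} are ``there exists $l$ with $\E(Y_{l}^{s})<1$'' (Lemma~\ref{upperBoundProof}) and ``$\E(Y_{k}^{s})>1$ for all large $k$'' (Lemma~\ref{lowerBoundProof}), these reduce in the additive case to $m(s)<1$ and $m(s)>1$ respectively, and $s_{B}$ is the unique root of $m(s)=1$ by monotonicity and continuity. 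No martingale convergence is needed. What your Biggins-martingale argument buys is a strictly stronger statement---that the quenched growth rate of $Z_{k}(s_{B})$ equals the annealed rate $m(s_{B})$ on survival---but the corollary does not require it.

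Two steps in your version need repair. First, the non-degeneracy claim: ``all weights bounded by $1$'' gives $\E\bigl(\sum_{e}w_{e}\log w_{e}\bigr)\le 0$, which yields the Biggins derivative condition $\E\bigl(\sum_{e}w_{e}\log w_{e}\bigr)<m\log m$ only when $m\ge 1$ (and strictly only after noting that not all weights can equal $1$, since non-extinguishing forces $\sum_{i}p_{i}\#\mathbb{I}_{i}>1$). If $m(s_{B})<1$ the condition can fail and $M_{\infty}$ may vanish, so you must first dispose of that case via your Jensen bound $\E(Z_{k}^{1/k})\le m(s_{B})$ before invoking uniform integrability; as written the order of the argument hides this. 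Second, your final step silently replaces the unconditional expectation in (\ref{subadditiveExpecationEqui}) by the expectation conditioned on non-extinction: on extinction $Z_{k}$ vanishes for all large $k$, so your argument actually gives $\E(Z_{k}(s_{B})^{1/k})\to m(s_{B})\,\Prob(\text{non-extinction})$, and equating this with $1$ does not yield $m(s_{B})=1$ when extinction has positive probability. The clean way out is to argue from the moment criteria of Lemmas~\ref{upperBoundProof} and~\ref{lowerBoundProof} (which are phrased in terms of $\E(Y_{l}^{s})$, exactly the quantity that additivity turns into $m(s)^{l}$) rather than from the literal limit statement (\ref{subadditiveExpecationEqui}).
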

Similarly to the $1$-variable RIFSs we can get a dimension drop for $\infty$-variable carpets.
\begin{rem}
Let $(\mathbb{I},\mathbf{p})$ be a box-like self-affine RIFS that satisfies the UORC but does not have an additive modified singular value function.
Let $F_{\tau}$ be the $\infty$-variable attractor associated with $(\mathbb{I},\mathbf{p})$. Then the almost sure box-counting dimension of the attractor can drop below the least box-counting dimension of the individual attractors, that is there exists $\mathbb{I}$ such that, almost surely,
\[
\dim_{B}F_{\tau}<\min_{i\in\Lambda}\dim_{B}F_{i,i,i,\dots}\,.
\]
See Example~\ref{weirdlyConvergingExample} in Section~\ref{sec:examples}.
\end{rem}

Finally, we remark that the the box-counting dimension of $\infty$-variable attractors is always an upper bound to the box-counting dimension of $1$-variable attractors. When $\overline\psi$ is additive this can be easily seen by Jensen's inequality, noting that (\ref{shortenedEq}) is a geometric and (\ref{additiveinfty}) an arithmetic average.

\section{Examples}\label{sec:examples}
We now use our results to compute the box-counting dimension of some simple random self-affine sets.
\subsection{Example}\label{simpleExtensionExample}
\begin{figure}[htbp]
\begin{center}
\hfill\setlength{\fboxsep}{-.5pt}
\fbox{\includegraphics[width=0.30\textwidth]{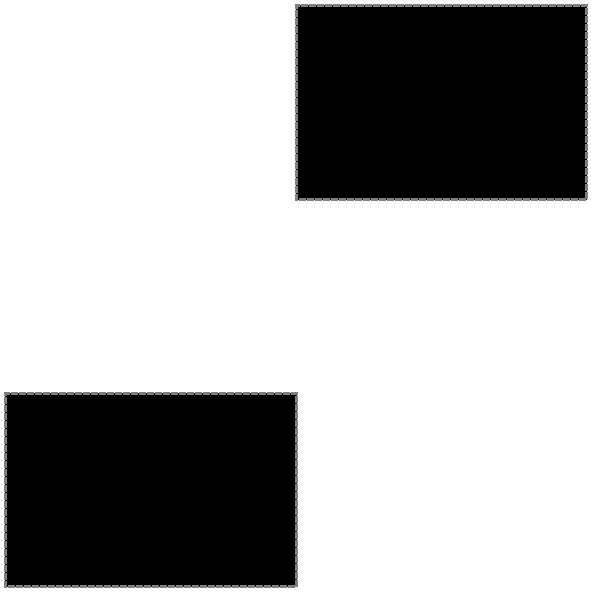}}\hfill
\fbox{\includegraphics[width=0.30\textwidth]{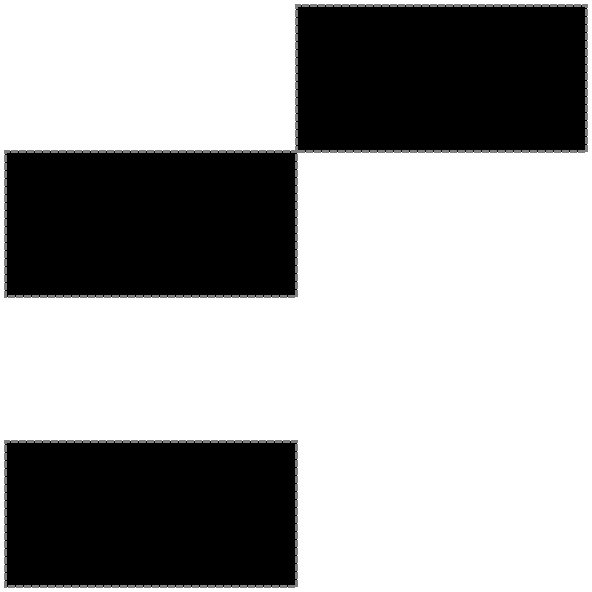}}\hfill\mbox{}
\caption{The two iterated functions systems used in Example~\ref{simpleExtensionExample}. The IFS $\mathbb{I}_{1}$ is to the left and $\mathbb{I}_{2}$ is to the right.}
\label{BMCarpetExample}
\end{center}
\end{figure}
\begin{figure}[htbp]
\begin{center}\setlength{\fboxsep}{-.5pt}
\fbox{\includegraphics[width=0.30\textwidth]{BMCarpetEx1}}\hfill
\fbox{\includegraphics[width=0.30\textwidth]{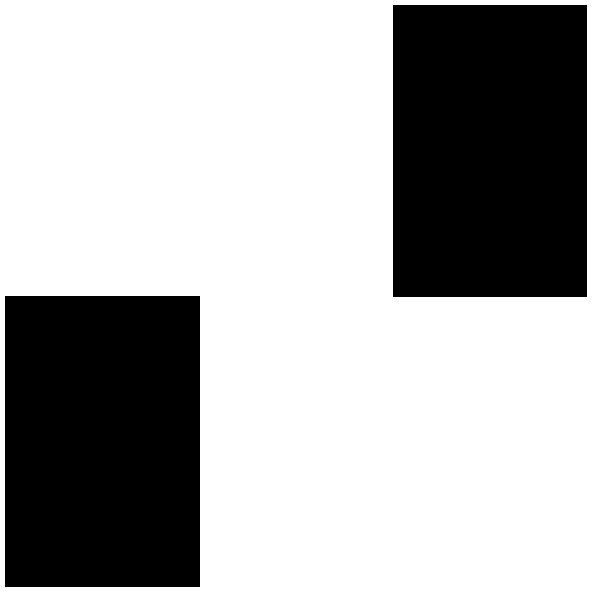}}\hfill
\fbox{\includegraphics[width=0.30\textwidth]{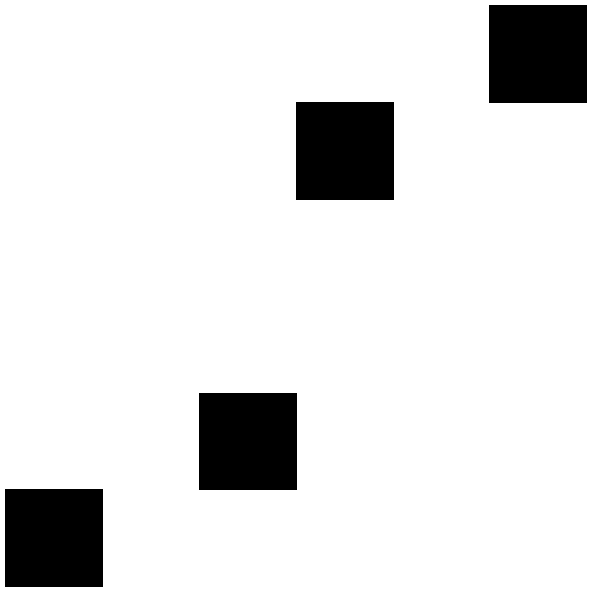}}
\caption{The two iterated functions systems used in Example~\ref{weirdlyConvergingExample} (left and middle) and the second iteration, choosing $\mathbb{I}_{1}$ and $\mathbb{I}_{2}$ in the construction.}
\label{BMCarpetExample2}
\end{center}
\end{figure}
Let $\mathbb{I}_{1}$ be the IFS for a Bedford-McMullen carpet $F_{1,1,\dots}$ with subdivision $n_{1}=2$ and $m_{1}=3$, consisting of two maps; one maps the unit square to a rectangle in the left column and one maps the unit square into a rectangle of the right column.
Take $\mathbb{I}_{2}$ to be the IFS for a Bedford-McMullen carpet $F_{2,2,\dots}$ with subdivision $n_{2}=2$ and $m_{2}=4$, consisting of three maps, two mapping into the left column and one mapping into the right column, see Figure~\ref{BMCarpetExample}. Note that for both IFSs the box-counting dimension of the projection onto the horizontal axis is $1$ and consider the $1$-variable box-like self-affine carpet associated with $\mathbb{I}=\{\mathbb{I}_{1},\mathbb{I}_{2}\}$ and $\mathbf{p}=\{1/2,1/2\}$.

The modified singular value function takes the value $\psi_{1}^{t}=(1/2)(1/3)^{t-1}$ for all elements $e\in\mathbb{I}_{1}$ and $\psi_{2}^{t}=(1/2)(1/4)^{t-1}$ for all $e\in\mathbb{I}_{2}$. Solving $\sum_{e\in\mathbb{I}_{1}}\psi_{1}^{t}=1$ and $\sum_{e\in\mathbb{I}_{2}}\psi_{2}^{t}=1$ for $t$ we get $\dim_{B}F_{1,1,\dots}=1$ and $\dim_{B}F_{2,2,\dots}=\log6/\log4$. However, as $\mu$ is the $(1/2,1/2)$-Bernoulli measure, substituting into~(\ref{shortenedEq}) and solving for $s_{B}$ we get that, almost surely, $\dim_{B}F_{\omega}=\log18/\log12$ and since $\log18/\log12>(1+\log6/\log4)/2$ equation~(\ref{expectationOfIndivid}) fails even in the simple setting of Bedford-McMullen carpets with mixed subdivisions.

\subsection{Example}\label{weirdlyConvergingExample}
Let $\mathbb{I}_{1}$ be the IFS for a Bedford-McMullen carpet as in the previous example and let $\mathbb{I}_{2}$ be another Bedford-McMullen carpet but with major contraction in the horizontal, see Figure~\ref{BMCarpetExample2}. Note that the periodic word $\widetilde\omega=(1,2,1,2,\dots)$ describes a self-similar set with Hausdorff and box-counting dimension $\log 4/\log 6$. It is easy to check that the individual Bedford-McMullen carpets have box-counting dimension $1$. Again, $\mathbb{I}=\{\mathbb{I}_{1},\mathbb{I}_{2}\}$ is of separated type, but has a non-additive   modified singular value function. We can calculate the box-counting dimension for the $1$-variable model explicitly, assuming the $(1/2,1/2)$-Bernoulli measure. First note that both projections of the attractor have, by symmetry, the same dimension $s_{p}$, which is the unique real satisfying 
\[
\exp\E\log\sum_{e\in W_{\omega_{1}}}\Lip(\pi_{x}f(e,.))^{s_{p}}=(2(1/2)^{s_{p}}2(1/3)^{s_{p}})^{1/2}=1, 
\]
where $\Lip$ refers to the Lipschitz constant of the similarity $f_e(x)=f(e,x)$ with respect to $x$. So, almost surely, $s_{p}=\log 4/\log 6$.
The box-counting dimension of the carpet then becomes the unique $t$ satisfying
\[
\left(2^{n}\left(2^{-k(n)}3^{k(n)-n}\right)^{\log 4/\log 6}\left(2^{k(n)-n}3^{-k(n)}\right)^{t-\log4/\log6}\right)^{1/n}\to 1\;\;\text{ a.s.\ as }n\to\infty,
\]
where $k(n)$ is maximum number of $1$s or $2$s in a randomly picked string $\{1,2\}^{n}$. Since $k(n)/n\to1/2$ a.s.\ we deduce $t=\log4/\log6$ and, almost surely, the box-counting dimension of the $1$-variable carpet agrees with that of the periodic word $\widetilde\omega$ and is strictly less than the box-dimensions of the individual attractors.

Taking the same $\mathbb{I}_{1}$, $\mathbb{I}_{2}$ and $\mathbf{p}=\{1/2,1/2\}$ but the $\infty$-variable construction, we can calculate the almost sure box-counting dimension of the projection of the carpet to be the unique $s_{p}$ satisfying 
\[
\E\sum_{e\in W_{\omega_{1}}}\Lip(\pi_{x}f(e,K))^{s_{p}}=2^{-s_{p}}+3^{-s_{p}}=1.
\]
For tree levels that are odd, the maximal singular value $\alpha_{M}$ cannot equal the lower singular value $\alpha_{m}$. We can explicitly state the expectation of the $2k+1$ level sum of the modified singular value function for $k\in\N_{0}$ by noting that for a binary tree of level $2k+1$ with two choices of labels per node there are $2^{1}2^{2}2^{4}\hdots 2^{2^{(2k+1)-1}}=2^{1+2+\hdots+2^{2k}}=2^{2^{2k+1}-1}$ choices of trees and thus $2^{2^{2k}}2^{2^{2k+1}-1}=2^{2^{2k+1}+2^{2k}-1}=2^{3 \cdot 2^{2k}-1}$ equally likely paths.
These paths correspond to all values of $2\alpha_{M}^{s_{B}}\alpha_{m}^{t-s_{B}}$ we want to sum up. Notice that $\alpha_{M}=2^{-i}3^{-j}$ and $\alpha_{m}=2^{-j}3^{-i}$ for some $i\geq j$ at level $k=i+j$. Thus for subtrees the new singular values can only be $2^{-i-1}3^{-j}$ if $i=j$, and $2^{-i-1}3^{-j}$ or $2^{-i}3^{-j-1}$ otherwise. Therefore the number of choices for $i$ at level $2k+1$ must be
\[
\binom{2k+1}{i} 2^{2^{2k+1}-1}
\]
for $k+1\leq i\leq 2k+1$. 
We can now state the expectation at level $2k+1$,
\begin{align*}
\E\sum_{e\in\mathbf{T}_{\tau}^{2k+1}}\overline\psi_{\tau}^{t}(e)&=\frac{1}{2^{2^{2k+1}-1}}\sum_{i=k+1}^{2k+1}2^{2^{2k+1}-1}\binom{2k+1}{i}\\
&\hspace{3cm}\cdot2(2^{-i}3^{-(2k+1-i)})^{s_{p}}(2^{-(2k+1-i)}3^{-i})^{t-s_{p}}\\ & \\
&=2\sum_{i=k+1}^{2k+1}\binom{2k+1}{i}\left(\frac{3}{2}\right)^{s_{p}2i}\left(\frac{3}{2}\right)^{-s_{p}(2k+1)}2^{-t(2k+1)}\left(\frac{2}{3}\right)^{it}\\
&=2^{1-t(2k+1)}\left(\frac{2}{3}\right)^{s_{p}(2k+1)}\sum_{i=k+1}^{2k+1}\binom{2k+1}{i}\left(\frac{3}{2}\right)^{i(2s_{p}-t)}\\
&\leq2^{1-t(2k+1)}\left(\frac{2}{3}\right)^{s_{p}(2k+1)}\left(1+\left(\frac{3}{2}\right)^{(2s_{p}-t)}\right)^{2k+1}.
\end{align*}
Let $\epsilon>0$ and set $t=s_{p}+\epsilon$, then
\begin{align*}
&\leq2^{1-\epsilon(2k+1)}3^{-s_{p}(2k+1)}\left(1+\left(\frac{3}{2}\right)^{s_{p}}\left(\frac{3}{2}\right)^{-\epsilon}\right)^{2k+1}\\
&=2^{1-\epsilon(2k+1)}3^{-s_{p}(2k+1)}\left(1+(3^{-s_{p}}-1)\left(\frac{3}{2}\right)^{-\epsilon}\right)^{2k+1}\\
&\to 0\;\;\;\;\text{ as }k\to\infty.
\end{align*}
Therefore the box-counting dimension $s_{B}$ of the attractor satisfies $s_{p}\leq s_{B}\leq s_{p}+\epsilon$ and so $s_{B}=s_{p}$. Again this is a dimension drop when compared to the original attractors.

\section{Proofs}\label{proofs}
This section is divided into two parts; we prove the results of Section~\ref{1varSect} in the first, followed by the proofs for Section~\ref{inftyVarRes}. First, however, we recall the definition of random graph directed systems introduced in Troscheit~\cite{Troscheit15} and then state and prove Lemmas \ref{lma:constructionOfProjections} and \ref{lma:almostsureprojections}, which apply to both $1$-variable and $\infty$-variable constructions.

\begin{defn}\label{RGDS}
Let $\mathbf{\Gamma}=\{\Gamma(1),\dots,\Gamma(N)\}$ be a finite collection of directed graphs with edge sets $E(i)$, $1\leq i\leq N$ and common vertex set $V=\{1,\dots,M\}$. Associate to each graph a probability $p_{i}>0$ such that $\sum p_{i}=1$. We represent each edge $\mathbf{e}\in E(i)$ by a unique prime arrangement of words $e\in\mathcal{G}^{E}$ that codes an associated contraction $f_{\mathbf{e}}$.
For $v,w\in V$ write $\tensor*[_{v}]{E}{_{w}}(i)=e_{1}\sqcup\dots\sqcup e_{n}$, where the edges associated with the prime arrangements $e_{k}\in E(i)$ have initial vertex $v$ and terminal vertex $w$. We set $\tensor*[_{v}]{E}{_{w}}(i)=\varnothing$ if no such edge exists.
Let
\[
\mathbf{E}(i)=\begin{pmatrix}
\tensor*[_{1}]{E}{_{1}}(i) & \tensor*[_{1}]{E}{_{2}}(i) & \dots & \tensor*[_{1}]{E}{_{M}}(i)\\
\tensor*[_{2}]{E}{_{1}}(i) & \ddots &  & \tensor*[_{1}]{E}{_{M}}(i)\\
\vdots&&\ddots&\vdots\\
\tensor*[_{M}]{E}{_{1}}(i) & \tensor*[_{M}]{E}{_{2}}(i) & \dots & \tensor*[_{M}]{E}{_{M}}(i)
\end{pmatrix}
\]
and $\mathds{1}_{v}$ be a vector of length $M$ such that $(\mathds{1}_{v})_{k}=\epsilon_{0}$ if $k=v$ and $(\mathds{1}_{v})_{k}=\varnothing$ otherwise. 
\end{defn}
Matrix multiplication $\times$ and addition $\sqcup$ for such $n$ by $n$ matrices $\mathbf{M}$ and $\mathbf{N}$, and vectors $\mathbf{v}$ of dimension $n$ are defined in the natural way:
\begin{equation}\label{matrixMult}
(\mathbf{M}\times\mathbf{N})_{i,j}=\bigsqcup_{k=1}^{n}(\mathbf{M}_{i,k}\odot\mathbf{N}_{k,j}),\hspace{0.5cm}
(\mathbf{M}\sqcup\mathbf{N})_{i,j}=\mathbf{M}_{i,j}\sqcup\mathbf{N}_{i,j},
\end{equation}
\[
(\mathbf{v}\times\mathbf{M})_{i}=\bigsqcup_{k=1}^{n}({v}_{k}\odot\mathbf{M}_{k,i}).
\]
\begin{defn}\label{defn:rgdsdef}
Let
\[
\mathbf{E}_{v}^{k}(\omega)=\bigsqcup_{1\leq j\leq M}\left(\mathds{1}_{v}\mathbf{E}(\omega_{1})\mathbf{E}(\omega_{2})\dots\mathbf{E}(\omega_{k})\right)_{j}
\]
be the \emph{$k$-level sets} for vertex $i\in\{1,\dots,N\}$. The \emph{$1$-variable random graph directed self similar set} $K_{v}(\omega)$ associated to $v\in V$ and realisation $\omega\in\Lambda^{\N}$ is given by
\[
K_{v}(\omega)=\bigcap_{k>0}f(\mathbf{E}_{v}^{k}(\omega),\Delta).
\]
The $\infty$-variable random graph directed system is defined analogously to $\infty$-variable RIFS (see Definitions~\ref{infinityDef1} and \ref{infinityDef2}), replacing $e_{\tau(v_{k-1})}^{v_{k}}$ by an appropriately chosen matrix $\mathbf{E}(i)$ and taking the sum of arrangements over the $v$-th column.
\end{defn}

\begin{defn}\label{graphDefs}
Let $\mathbf{\Gamma}=\{\Gamma_{i}\}_{i\in\Lambda}$ be a finite collection of graphs, sharing the same vertex set $V$. 
\begin{enumerate}[label=\arabic{section}.\arabic{theo}.\alph*]
\item\label{nontrivialgraph} We say that the collection $\mathbf{\Gamma}$ is a \emph{non-trivial collection of graphs} if for every $i\in\Lambda$ and $v\in V$ we have 
\[
\bigsqcup_{w\in V}\tensor*[_{v}]{E}{_{w}}(i)\neq\varnothing.
\]
Furthermore we require that there exist $e_{1}\in E(i)$ and $e_{2}\in E(j)$ such that $f_{e_{1}}\neq f_{e_{2}}$.

\item\label{strongConnected} If for every $v,w\in V$ there exists $\omega^{v,w}\in\Lambda^{*}$ such that $\tensor[_{v}]{E}{_{w}}(\omega^{v,w})\neq\varnothing$ and $\mu([\omega^{v,w}])>0$, we call $\mathbf{\Gamma}$ \emph{stochastically strongly connected}.

\item\label{contractingSSRGDS} We call the \emph{Random Graph Directed System (RGDS)} associated with $\mathbf{\Gamma}$ a \emph{contracting self-similar RGDS} if for every $e\in E(i)$, $f_{e}$ is a contracting similitude.
\end{enumerate}
\end{defn}

\begin{cond}\label{replacementCondition}
Let $\mathbf{\Gamma}=\{\Gamma_{i}\}_{i\in\Lambda}$ be a finite collection of graphs, sharing the same vertex set $V$.
We say that the collection $\mathbf{\Gamma}$ is a \emph{non-trivial surviving collection of graphs} if for every $v\in V$ we have $\E(\#\{e\in\tensor*[_{v}]{E(\omega_{1})}{_{w}}\mid w\in V)>1$.
Furthermore we require that there exist $e_{1}\in E(i)$ and $e_{2}\in E(j)$ such that $f_{e_{1}}\neq f_{e_{2}}$.
\end{cond}

Condition~\ref{replacementCondition} is similar to a RIFS being non-extinguishing (Definition~\ref{defn:nonextinct}), guaranteeing the existence of a positive probability that $K_{v}(\omega)\neq \varnothing$.

\begin{lma}\label{lma:constructionOfProjections}
Let $(\mathbb{I},\mathbf{p})$ be a box-like self-affine RIFS with associated $1$-variable ($\infty$-variable) carpet $F_{\omega}$ ($F_{\tau}$). The projections $\pi_x(F_\omega)$ and $\pi_y(F_\omega)$, and  $\pi_x(F_\tau)$ and $\pi_y(F_\tau)$ are, in the separated case, random self-similar and, in the non-separated case, random graph-directed self-similar sets (RGDS) as above. 
The $1$-variable RGDS satisfies all conditions in Definition~\ref{graphDefs} (assuming every IFS has at least one map, with at least one IFS having two map) and the $\infty$-variable RGDS (assuming it is non-extinguishing) satisfies all conditions in Definition~\ref{graphDefs} with Condition~\ref{replacementCondition} replacing \ref{nontrivialgraph}.
\end{lma}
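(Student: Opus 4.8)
The plan is to exploit the block structure of the box-like maps, which forces each coordinate projection of $f_i^j$ to again be a one-dimensional similitude acting on one of the two projections of the argument. First I would record the commutation identities. Writing out $\mathbf{Q}_i^j$, when $\mathbf{Q}_i^j$ is diagonal one checks directly that $\pi_x\circ f_i^j=g_{i,x}^j\circ\pi_x$ and $\pi_y\circ f_i^j=g_{i,y}^j\circ\pi_y$, where $g_{i,x}^j(t)=\pm a_i^j t+u_i^j$ and $g_{i,y}^j(t)=\pm b_i^j t+v_i^j$ are contracting similitudes of $\R$ with ratios $a_i^j\in(0,1)$ and $b_i^j\in(0,1)$; when $\mathbf{Q}_i^j$ is anti-diagonal the same computation gives $\pi_x\circ f_i^j=g_{i,x}^j\circ\pi_y$ and $\pi_y\circ f_i^j=g_{i,y}^j\circ\pi_x$, i.e.\ the two projection directions are interchanged. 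These identities are the only geometric input; everything else is bookkeeping in the semiring $\beth$.

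Next I would build a graph on the two-element vertex set $V=\{x,y\}$. For each $i\in\Lambda$ I let each diagonal map $f_i^j$ contribute the self-loop $g_{i,x}^j$ to $\tensor*[_x]{E}{_x}(i)$ and $g_{i,y}^j$ to $\tensor*[_y]{E}{_y}(i)$, and each anti-diagonal map contribute the cross edges $g_{i,x}^j$ to $\tensor*[_x]{E}{_y}(i)$ and $g_{i,y}^j$ to $\tensor*[_y]{E}{_x}(i)$. This produces the edge matrices $\mathbf{E}(i)$ of Definition~\ref{RGDS}. Applying the commutation identities one composition at a time, and using $\pi_x(\Delta)=\pi_y(\Delta)=[0,1]$, gives $\pi_z\big(f(\mathbf{C}^k_\omega,\Delta)\big)=f\big(\mathbf{E}_z^k(\omega),[0,1]\big)$ for $z\in\{x,y\}$ (and the analogous statement for $\mathbf{T}_\tau^k$ in the $\infty$-variable case). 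Since the prefractals are nested decreasing compacta and $\pi_z$ is continuous, projection commutes with the intersection, so $\pi_x(F_\omega)=K_x(\omega)$ and $\pi_y(F_\omega)=K_y(\omega)$ are exactly the RGDS attractors of Definition~\ref{defn:rgdsdef}.

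Then I would read off the dichotomy from the off-diagonal entries of the $\mathbf{E}(i)$. In the separated case either no map is anti-diagonal, so $\tensor*[_x]{E}{_y}(i)=\tensor*[_y]{E}{_x}(i)=\varnothing$ for every $i$ and the graph splits into two disjoint self-loops, each projection being a single-vertex (hence self-similar) system; or no map is diagonal, in which case the graph is a $2$-cycle and composing consecutive edges (legitimate since anti-diagonal$\times$anti-diagonal is diagonal) again presents each projection as a single-vertex self-similar system. In the non-separated case both a diagonal and an anti-diagonal map occur, so the self-loops and both cross edges are present and the two vertices form a genuinely graph-directed, strongly connected system; this is the source of $s^x=s^y$.

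Finally I would verify the remaining hypotheses. Condition~\ref{contractingSSRGDS} is immediate, since every edge codes a similitude of ratio in $(0,1)$. For the $1$-variable non-triviality \ref{nontrivialgraph}: since every $\mathbb{I}_i$ contains at least one map, every vertex has an outgoing edge in every $\Gamma(i)$, and the distinct-contractions requirement follows from the IFS containing two maps together with the UORC (Definition~\ref{defn:UORC}) — two distinct sub-rectangles of $\Delta$ must differ in at least one coordinate interval, yielding two distinct projected similitudes in the corresponding direction. Stochastic strong connectedness \ref{strongConnected} is trivial for the single-vertex separated systems and, in the non-separated case, follows because the IFS carrying the anti-diagonal map is chosen with positive probability (supplying $x\to y$ and $y\to x$) while a diagonal map supplies the self-loops, so every ordered pair of vertices is joined by a positive-measure word. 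For the $\infty$-variable construction, Condition~\ref{replacementCondition} replaces \ref{nontrivialgraph}: each map contributes exactly one outgoing edge from $x$ and one from $y$, so the expected out-degree at either vertex equals $\sum_{i}p_i\#\mathbb{I}_i$, which exceeds $1$ precisely by the non-extinguishing hypothesis (Definition~\ref{defn:nonextinct}). I expect the delicate points to be the careful identification $\pi_z(F)=K_z$ inside the arrangement-of-words formalism and the all-anti-diagonal separated subcase, where one must legitimately collapse the $2$-cycle to a self-similar system rather than keep it graph-directed.
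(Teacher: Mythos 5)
Your proposal is correct and follows essentially the same route as the paper: the same coordinate commutation identities, the same two-vertex graph with self-loops for diagonal $\mathbf{Q}_i^j$ and cross edges for anti-diagonal ones, the same collapse of the all-(anti-)diagonal separated subcases to genuinely self-similar systems (the paper phrases the anti-diagonal subcase as passing to second-level concatenations), and the same verification of Definition~\ref{graphDefs} and Condition~\ref{replacementCondition}. The only cosmetic difference is that you invoke the UORC for the distinct-contractions requirement, which is neither assumed in the lemma nor needed — two distinct diagonal (or anti-diagonal) box-like maps already differ in at least one coordinate projection as maps, which is all the paper uses.
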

\begin{proof}
We prove the $1$-variable case where $(\mathbb{I},\mathbf{p})$ is a $1$-variable box-like self-affine RIFS.
Assume that $\mathbb{I}$ is separated; without loss of generality (considering all possible concatenations of two $f_{i}^{j}$ if necessary) for every $i\in\Lambda$ and $j\in\mathbb{I}_{i}$ the matrix $\mathbf{Q}_{i}^{j}$ does not have off-diagonal entries and for some $\widehat a_{i}^{j},\widehat b_{i}^{j}\in\R\setminus\{0\}$ each map $f_{i}^{j}$ can be rewritten as
\begin{equation}\label{matrixFormSep}
f_{i}^{j}(\mathbf{x})=\begin{pmatrix}
\widehat a_{i}^{j}	&	0\\
0			&	\widehat b_{i}^{j}
\end{pmatrix}
\mathbf{x}+\begin{pmatrix} u_{i}^{j} \\ v_{i}^{j}\end{pmatrix}.
\end{equation}
We define the two induced maps $\Pi_{x}f_{i}^{j}:\R\to\R$ and $\Pi_{y}f_{i}^{j}:\R\to\R$ by 
\[
\Pi_{x}f_{i}^{j}(z)=\pi_{x}\circ f_{i}^{j}\circ\pi_{x}^{-1}(z)=\widehat a_{i}^{j} z +u_{i}^{j}\;\;\;\text{ and }\;\;\; 
\Pi_{y}f_{i}^{j}(z)=\pi_{y}\circ f_{i}^{j}\circ\pi_{y}^{-1}(z)=\widehat b_{i}^{j} z +v_{i}^{j}.
\]
For every $e\in\mathbf{C}_{\omega}^{k}$ 
\begin{align*}
\Pi_{x}f_{e}&=\pi_{x}\circ f_{e_{1}}\circ f_{e_{2}}\circ\dots\circ f_{e_{\lvert e\rvert}}\circ \pi_{x}^{-1}\\
&=\pi_{x}\circ f_{e_{1}}\circ\pi_{x}^{-1}\circ\pi_{x}\circ f_{e_{2}}\circ\dots\pi_{x}\circ f_{e_{\lvert e\rvert}}\circ \pi_{x}^{-1}\\
&=(\Pi_{x}f_{e_{1}})\circ(\Pi_{x}f_{e_{2}})\circ\dots\circ (\Pi_{x}f_{e_{\lvert e\rvert}}),
\end{align*}
where $f_{e}(x)=f(e,x)$.
So the attractor $K_{x,\omega}$ of the iterated function system $\mathbb{I}^{x}=\{\mathbb{I}_{i}^{x}\}_{i\in\Lambda}$, with $\mathbb{I}_{i}^{x}=\{\Pi_{x}f_{i}^{j}\}_{j\in\mathcal{I}_{i}}$ satisfies $K_{x,\omega}=\pi_{x}F_{\omega}$. 
A similar argument holds for the projection onto the vertical axis and thus the projections of $F_{\omega}$ onto the $x$ and $y$ axes are the attractors of the RIFSs $\mathbb{I}^{x}$ and $\mathbb{I}^{y}$. Finally note that the projections are similitudes and hence the projections of $F_{\omega}$ onto the horizontal and vertical axes are self-similar RIFS.

The argument for the graph directed construction in the non-separated case is similar. For each $i\in\Lambda$ we define a graph $\Gamma_{i}$, where all graphs $\mathbf{\Gamma}=\{\Gamma_{1},\dots,\Gamma_{N}\}$ share the same vertex set $V=\{V_{h},V_{v}\}$. We define a new set of prime arrangements 
$\mathcal{G}^{*E}=\{e_{i,j}^{z}\mid z\in\{x,y\}\}$ for the RGDS, where for every $e_{i}^{j}$ we obtain two new unique words: $e_{i,j}^{x}$ and $e_{i,j}^{y}$. Let $f_{i}^{j}$ be of the same form as~(\ref{matrixFormSep}), we define 
\[
f(e_{i,j}^{x},z)=\pi_{x}\circ f_{i}^{j}\circ\pi_{x}^{-1}(z)=\widehat{a}_{i}^{j}z+u_{i}^{j}\;\;\;\text{and}\;\;\;
f(e_{i,j}^{y},z)=\pi_{y}\circ f_{i}^{j}\circ\pi_{y}^{-1}(z)=\widehat{b}_{i}^{j}z+v_{i}^{j}.
\]
For $f_{i}^{j}$ that map vertical lines to horizontal and vice versa, i.e. are of form
\begin{equation}\label{matrixFormNonSep}
f_{i}^{j}(\mathbf{x})=\begin{pmatrix}
0				&	\widehat a_{i}^{j}\\
\widehat b_{i}^{j}	&	0
\end{pmatrix}
\mathbf{x}+\begin{pmatrix} u_{i}^{j} \\ v_{i}^{j}\end{pmatrix},
\end{equation}
we define 
\[
f(e_{i,j}^{y},z)=\pi_{y}\circ f_{i}^{j}\circ\pi_{x}^{-1}(z)=\widehat{b}_{i}^{j}z+v_{i}^{j}\;\;\;\text{and}\;\;\;
f(e_{i,j}^{x},z)=\pi_{x}\circ f_{i}^{j}\circ\pi_{y}^{-1}(z)=\widehat{a}_{i}^{j}z+u_{i}^{j}.
\]

Fix $i\in\Lambda$. We define, for all $v,w\in V$ the edge set $\tensor*[_{v}]{E}{_{w}}(i)$ by
\[
\tensor*[_{V_{h}}]{E}{_{V_{h}}}(i)=\bigsqcup\{e^{x}_{i,j} \mid f_{i}^{j}\text{ is of form (\ref{matrixFormSep})}\},
\]
\[
\tensor*[_{V_{v}}]{E}{_{V_{v}}}(i)=\bigsqcup\{e^{y}_{i,j} \mid f_{i}^{j}\text{ is of form (\ref{matrixFormSep})}\},
\]
\[
\tensor*[_{V_{h}}]{E}{_{V_{v}}}(i)=\bigsqcup\{e^{x}_{i,j} \mid f_{i}^{j}\text{ is of form (\ref{matrixFormNonSep})}\}, 
\]
\[
\tensor*[_{V_{v}}]{E}{_{V_{h}}}(i)=\bigsqcup\{e^{y}_{i,j} \mid f_{i}^{j}\text{ is of form (\ref{matrixFormNonSep})}\}.
\]

It remains to check that $\pi_{x}F_{\omega}=K_{V_{h}}(\omega)$ and $\pi_{y}F_{\omega}=K_{V_{v}}(\omega)$, for which it is sufficient to show that, given any finite arrangement of words in $e\in\mathbf{C}_{\omega}^{*}$,
\begin{equation}\label{projIsTheSame}
\pi_{x}f(e,\Delta)=f(e^{x},[0,1])\;\;\text{and}\;\; \pi_{y}f(e,\Delta)=f(e^{y},[0,1])
\end{equation}
is satisfied, where $e^{x}$ and $e^{y}$ are the two induced paths, starting at $V_{h}$ and $V_{v}$, respectively.

If $e=\epsilon_{0}$, then $e^{x}=e^{y}=\epsilon_{0}$ and if $e=\varnothing$, then $e^{x}=e^{y}=\varnothing$ and in both cases~(\ref{projIsTheSame}) holds trivially.
Let $w=w_{1}\odot\dots\odot w_{k}\in\mathbf{C}_{\omega}^{k}$ with $w_{1}=e_{i}^{j}$ and write $w^{x}$ and $w^{y}$ for the induced paths. Assume inductively that $\pi_{x}f(w_{2}\odot\dots\odot w_{k},\Delta)=f((w_{2}\odot\dots\odot w_{k})^{x},[0,1])$ and $\pi_{y}f(w_{2}\odot\dots\odot w_{k},\Delta)=f((w_{2}\odot\dots\odot w_{k})^{y},[0,1])$.
Consider the map $f_{i}^{j}$ and assume first it is of the form~(\ref{matrixFormSep}). The map $f(w,.)$ can be written as
\begin{align*}
\begin{pmatrix}
z_{1} \\
z_{2}
\end{pmatrix}
&\mapsto
\begin{pmatrix}
\widehat{a}_{i}^{j}\cdot\pi_{x}\circ f(w_{2}\odot\dots\odot w_{k},(z_{1},z_{2})^{\top})+u_{i}^{j}\\
\widehat{b}_{i}^{j}\cdot\pi_{y}\circ f(w_{2}\odot\dots\odot w_{k},(z_{1},z_{2})^{\top})+v_{i}^{j}
\end{pmatrix}\\
&=
\begin{pmatrix}
\widehat{a}_{i}^{j}\cdot f((w_{2}\odot\dots\odot w_{k})^{x},z_{1})+u_{i}^{j}\\
\widehat{b}_{i}^{j}\cdot f((w_{2}\odot\dots\odot w_{k})^{y},z_{2})+v_{i}^{j}
\end{pmatrix}\\
&=
\begin{pmatrix}
f(w^{x},z_{1})\\
f(w^{y},z_{2})
\end{pmatrix}.
\end{align*}
Analogously, if $f_{i}^{j}$ is of form~(\ref{matrixFormNonSep}),
\begin{align}
\begin{pmatrix}
z_{1} \\
z_{2}
\end{pmatrix}
&\mapsto
\begin{pmatrix}
\widehat{b}_{i}^{j}\cdot\pi_{y}\circ f(w_{2}\odot\dots\odot w_{k},(z_{1},z_{2})^{\top})+v_{i}^{j}\\
\widehat{a}_{i}^{j}\cdot\pi_{x}\circ f(w_{2}\odot\dots\odot w_{k},(z_{1},z_{2})^{\top})+u_{i}^{j}
\end{pmatrix}\nonumber\\
&=
\begin{pmatrix}
\widehat{b}_{i}^{j}\cdot f((w_{2}\odot\dots\odot w_{k})^{y},z_{1})+v_{i}^{j}\\
\widehat{a}_{i}^{j}\cdot f((w_{2}\odot\dots\odot w_{k})^{x},z_{2})+u_{i}^{j}
\end{pmatrix}\nonumber\\
&=
\begin{pmatrix}
f(w^{x},z_{1})\\
f(w^{y},z_{2})
\end{pmatrix},\label{graphSignChange}
\end{align}
where we have used that $w_{1}^{x}\in\tensor*[_{V_{h}}]{E}{_{V_{v}}}(\omega_{1})$ and $w_{1}^{y}\in\tensor*[_{V_{v}}]{E}{_{V_{h}}}(\omega_{1})$ in~(\ref{graphSignChange}). Therefore, by induction on word length, $\pi_{x}F_{\omega}^{k}=K^{k}_{V_{h}}(\omega)$ and $\pi_{y}F_{\omega}^{k}=K^{k}_{V_{v}}(\omega)$.
We conclude that $\pi_{x}F_{\omega}=K_{V_{h}}(\omega)$ and $\pi_{y}F_{\omega}=K_{V_{v}}(\omega)$, where $K_{i}(\omega)$ is the $1$-variable random graph directed system of Definition~\ref{defn:rgdsdef}, as all maps are similitudes.

Finally we check the conditions of Definition~\ref{graphDefs}. Non-triviality arises from the fact that $p_{i}>0$ for all $i\in\Lambda$, each map $f_{i}^{j}$ induces exactly one map starting at each of the two vertices and that we assume at least two maps to be distinct. Lastly, the stochastically strongly connected condition is satisfied since at least one of the maps is separated and there exists at least one pair $(i,j)$ such that horizontal get mapped to vertical lines.

The result for the $\infty$-variable case is almost identical and left to the reader.
\end{proof}

The following lemma follows directly from Theorems 2.24 ($1$-variable) and 3.6 ($\infty$-variable) in Troscheit~\cite{Troscheit15}.

\begin{lma}\label{lma:almostsureprojections}
Let $(\mathbb{I},\mathbf{p})$ be a box-like self-affine RIFS s.t. every IFS has at least one map, with at least one IFS having two maps (it is non-extinguishing) and let $F_{\omega}$ ($F_{\tau}$) be the associated $1$-variable ($\infty$-variable carpet. 
Let $e\in\mathbf{C}_{\omega}^{*}$ ($e\in\mathbf{T}_{\tau}^{*}$) be the level sets of Definition~\ref{defn:1varcoding} (Definition~\ref{infinityDef1}). Then $\overline{s}(e,F_{\omega})=\underline{s}(e,F_{\omega})$ ($\overline{s}(e,F_{\tau})=\underline{s}(e,F_{\tau})$) is constant almost surely and coincides with $s^{x}$ or $s^{y}$, the almost sure box-counting dimension of the projection of $F_{\omega}$ ($F_{\tau}$) onto the horizontal or vertical axis, respectively. If $\mathbb{I}$ is non-separated, then additionally $s^{x}=s^{y}$ almost surely.
\end{lma}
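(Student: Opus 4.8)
The plan is to reduce the statement about the quantities $\overline{s}(e,F_\omega)$ and $\underline{s}(e,F_\omega)$ (and their $\infty$-variable counterparts) entirely to statements about the two projections $\pi_x F_\omega$ and $\pi_y F_\omega$, and then to invoke the cited coincidence-of-dimensions theorems. The key observation is that, by the very definition of $\overline{s}(e,F_\omega)$, this number depends on the level word $e$ \emph{only} through whether the longer edge of the rectangle $f(e,\Delta)$ is horizontal or vertical; in either case its value is one of the two fixed numbers $\overline{\dim}_B(\pi_x F_\omega)$ or $\overline{\dim}_B(\pi_y F_\omega)$, neither of which depends on $e$ in any further way. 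Thus it suffices to show that each projection has a box-counting dimension that exists and is almost surely a deterministic constant.

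First I would apply Lemma~\ref{lma:constructionOfProjections}, which identifies $\pi_x F_\omega$ and $\pi_y F_\omega$ (and their $\infty$-variable analogues $\pi_x F_\tau$, $\pi_y F_\tau$) as $1$-variable, respectively $\infty$-variable, self-similar random graph-directed sets satisfying all the conditions of Definition~\ref{graphDefs}; in the $\infty$-variable case Condition~\ref{replacementCondition} replaces \ref{nontrivialgraph}, as recorded there. With the hypotheses of Definition~\ref{graphDefs} in force, Theorem 2.24 ($1$-variable) and Theorem 3.6 ($\infty$-variable) of Troscheit~\cite{Troscheit15} assert that for such a self-similar RGDS the upper and lower box-counting dimensions agree almost surely and equal a deterministic constant. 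Applying this to the horizontal projection gives $\overline{s}^x(F_\omega)=\underline{s}^x(F_\omega)=s^x$ almost surely for a constant $s^x$, and applying it to the vertical projection gives $\overline{s}^y(F_\omega)=\underline{s}^y(F_\omega)=s^y$ almost surely; the $\infty$-variable case is identical. Combining this with the reduction above yields the main conclusion: for every $e\in\mathbf{C}_\omega^*$ (respectively $e\in\mathbf{T}_\tau^*$) the value $\overline{s}(e,F_\omega)$ equals $s^x$ or $s^y$ and coincides with $\underline{s}(e,F_\omega)$, almost surely and simultaneously over all $e$, since the only $e$-dependence is the finite choice between the two orientation cases.

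It remains to treat the non-separated case and establish $s^x=s^y$. Here Lemma~\ref{lma:constructionOfProjections} provides a single RGDS on the two-element vertex set $\{V_h,V_v\}$ with $\pi_x F_\omega=K_{V_h}(\omega)$ and $\pi_y F_\omega=K_{V_v}(\omega)$, and verifies that this system is stochastically strongly connected (property~\ref{strongConnected}): the presence of at least one map sending horizontal lines to vertical lines produces edges between $V_h$ and $V_v$ in both directions with positive probability. For a strongly connected graph-directed system the limit set at any vertex contains, after finitely many levels, a rescaled similar copy of the limit set at any other vertex, so box-counting dimension, being invariant under similarities, cannot depend on the base vertex; this vertex-independence is precisely what Troscheit's dimension formula delivers for strongly connected $\mathbf{\Gamma}$. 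Hence $\dim_B K_{V_h}=\dim_B K_{V_v}$, i.e.\ $s^x=s^y$, almost surely.

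I expect the only genuinely delicate point to be this last step, namely making the vertex-independence rigorous in the random setting, which is exactly why the strong-connectivity clause of Lemma~\ref{lma:constructionOfProjections} was isolated there; everything else is a direct citation of the two RGDS theorems in~\cite{Troscheit15} together with the trivial observation that $\overline{s}(e,F_\omega)$ ranges over only the two projection dimensions.
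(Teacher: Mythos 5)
Your proposal is correct and follows essentially the same route as the paper, which simply observes that the lemma ``follows directly from Theorems 2.24 and 3.6 in Troscheit~\cite{Troscheit15}'' once Lemma~\ref{lma:constructionOfProjections} has identified the projections as self-similar RGDSs; your reduction of $\overline{s}(e,F_\omega)$ to the two projection dimensions and your use of stochastic strong connectivity for $s^x=s^y$ are exactly the details the paper leaves implicit.
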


Basic dimensional properties give that the box-counting dimension of a set $X$ is bounded above by 
\[
\overline\dim_{B}X\leq\overline \dim_{B} (\pi_{x}X\times \pi_{y}X)\leq\overline\dim_{B}(\pi_{x}X)+\overline\dim_{B}(\pi_{y}X).
\]
So, almost surely the box-counting dimension for $1$-variable and $\infty$-variable box-like carpets cannot exceed $s^{x}+s^{y}$.
In the proofs below we will however also consider the parameter $s$ for $s>s^{x}+s^{y}$ to show that our results are exactly the unique values $s_{B}$ such that (\ref{mainEq}) and (\ref{additiveinfty}) hold, rather than $\min\{s_{B},s^{x}+s^{y}\}$.
Note that this means that the modified singular value function must be subadditive, although not strictly so.

\subsection{Proofs for Section~\ref{1varSect}}

The modified singular value function is in certain cases either stochastically subadditive, additive or superadditive.
The proof shares many similarities with \cite[Lemma 2.1]{Fraser12}, although differs in some points because $s^{x}(F_{\omega})$ and $s^{y}(F_{\omega})$ do not surely coincide.

\begin{lma}\label{lma:stochAdditivity}
Let $e\in\mathbf{C}_{\omega}^{*}$ and $g\in\beth^{*}$ be such that $e\odot g\in\mathbf{C}_{\omega}^{*}$. Writing $l=\lvert e\rvert$ and $\gamma_{\omega}(e,g), \gamma_{\min}, \gamma_{\max}$ for some quantities that will arise in the proof but are almost surely equal to one, the following statements hold for all $k\geq0$
\begin{enumerate}

\item If $t\in[0,\overline{s}^{x}(F_{\omega})+\overline{s}^{y}(F_{\omega})]$ then
\begin{eqnarray}
\overline{\psi}_\omega^t(e\odot g)&\leq& \gamma_{\omega}(e,g)\overline{\psi}_\omega^t(e) \overline{\psi}_{\sigma^l \omega}^t(g),\nonumber
\\
\overline{\Psi}_\omega^{k+l}(s)&\leq& \gamma_{\max}^{(k+l)\lvert \overline{s}^{x}(F_{\omega})-\overline{s}^{y}(F_{\omega})\rvert}\Psi_\omega^{k}(s)\overline{\Psi}_{\sigma^k\omega}^{l}(s).\label{presSubMult}
\end{eqnarray}

\item If $t\geq \overline{s}^{x}(F_{\omega})+\overline{s}^{y}(F_{\omega})$ then 
\begin{eqnarray}
\overline{\psi}_\omega^t(e\odot g)&\geq& \gamma_{\omega}(e,g)\overline{\psi}_\omega^t(e) \overline{\psi}_{\sigma^l \omega}^t(g),\nonumber\\
\overline{\Psi}_\omega^{k+l}(s)&\geq& \gamma_{\min}^{(k+l)\lvert \overline{s}^{x}(F_{\omega})-\overline{s}^{y}(F_{\omega})\rvert}\overline{\Psi}_\omega^{k}(s)\overline{\Psi}_{\sigma^k\omega}^{l}(s).\nonumber
\end{eqnarray}
\end{enumerate}
An analogous result holds for $\underline\psi_{\omega}^{t}$ and $\underline\Psi_{\omega}^{k}$.
\end{lma}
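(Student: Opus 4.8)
The plan is to reduce everything to a single clean identity for the logarithmic defect of one composition $e\odot g$, and then obtain the pressure-sum estimates by summing over all splittings of a $(k+l)$-level word. \emph{First, the geometry of composition.} Since each generator $f_i^j$ has linear part $\mathrm{diag}(a_i^j,b_i^j)\mathbf{Q}_i^j$ with $\mathbf{Q}_i^j$ a signed permutation (Definition~\ref{def:frasercarpet}), the linear part of every $f_e$, $e\in\beth^\odot$, is again a diagonal matrix times a signed permutation; hence its two singular values are exactly the side lengths $\alpha_M(e)\ge\alpha_m(e)$ of the rectangle $f(e,\Delta)$. Writing $f(e\odot g,\Delta)=f_e(f(g,\Delta))$ and tracking how the (possibly axis-swapping) linear part of $f_e$ pairs its scalings $\alpha_M(e),\alpha_m(e)$ with the sides of $f(g,\Delta)$ yields
\[
\alpha_M(e\odot g)\,\alpha_m(e\odot g)=\alpha_M(e)\alpha_m(e)\,\alpha_M(g)\alpha_m(g),\qquad \alpha_M(e\odot g)\le\alpha_M(e)\alpha_M(g),
\]
the first being multiplicativity of $|\det|$ and the second expressing that the longest side is realised only when long pairs with long. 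I set $\beta(e,g)=\log\!\big(\alpha_M(e)\alpha_M(g)/\alpha_M(e\odot g)\big)\ge0$ for the eccentricity defect.

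\emph{The defect identity.} Abbreviating $D(\cdot)=\log(\alpha_M/\alpha_m)\ge0$, $s_1=\overline s(e,F_\omega)$, $s_2=\overline s(g,F_{\sigma^l\omega})$, $s_{12}=\overline s(e\odot g,F_\omega)$, taking logarithms and using the $|\det|$-relation to cancel the $t$-linear contributions, a short computation collapses the per-word defect to
\[
\log\frac{\overline\psi_\omega^t(e\odot g)}{\overline\psi_\omega^t(e)\,\overline\psi_{\sigma^l\omega}^t(g)}=(t-2s_{12})\,\beta(e,g)+(s_{12}-s_1)D(e)+(s_{12}-s_2)D(g).
\]
This identity is the crux: the first term carries the sign producing sub- versus super-multiplicativity, while the last two are the orientation-mismatch corrections.

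\emph{Signs, threshold, and summation.} By Lemma~\ref{lma:almostsureprojections}, almost surely $s_1,s_2,s_{12}\in\{s^x,s^y\}$ with $s^x,s^y$ constant (and $s^x=s^y$ in the non-separated case). When the three rectangles share the orientation of the larger projection dimension, the mismatch terms vanish and the identity reduces to $(t-2s_{12})\beta(e,g)$, which is $\le0$ for $t\le s^x+s^y$ and $\ge0$ for $t\ge s^x+s^y$; this gives the two per-word inequalities with $\gamma_\omega(e,g)=1$, the threshold $s^x+s^y$ being forced by $2s_{12}=s^x+s^y$ when $s^x=s^y$. In general I absorb the realisation-shift discrepancy between $\overline s(\cdot,F_\omega)$ and $\overline s(\cdot,F_{\sigma^l\omega})$ into $\gamma_\omega(e,g)$, which is almost surely $1$ by Lemma~\ref{lma:almostsureprojections}, and bound the remaining corrections by $|s^x-s^y|\big(D(e)+D(g)\big)$; since $D(e)\le|e|\max_{i,j}\log(\alpha_M(e_i^j)/\alpha_m(e_i^j))$ this is exactly what produces the exponent $(k+l)|\overline s^x(F_\omega)-\overline s^y(F_\omega)|$ on the constants $\gamma_{\max},\gamma_{\min}$. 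Summing over the bijection $\mathbf{C}^{k+l}_\omega\leftrightarrow\{(e,g):e\in\mathbf{C}^k_\omega,\ g\in\mathbf{C}^l_{\sigma^k\omega}\}$ and factoring the uniform correction out of the double sum $\sum_e\sum_g\overline\psi_\omega^t(e)\,\overline\psi_{\sigma^k\omega}^t(g)=\overline\Psi_\omega^k(s)\,\overline\Psi_{\sigma^k\omega}^l(s)$ then gives (\ref{presSubMult}) and its reverse. The lower statement for $\underline\psi_\omega^t,\underline\Psi_\omega^k$ is verbatim, the only input being that $\underline s(\cdot,F_\omega)\in\{\underline s^x,\underline s^y\}$ is almost surely constant.

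The main obstacle is the mismatch correction away from the diagonal $s^x=s^y$: orientation flips under $\odot$ genuinely occur (they can even turn a long rectangle into a near-square), so $(s_{12}-s_1)D(e)+(s_{12}-s_2)D(g)$ does not vanish pointwise. The delicate point is to separate the almost-surely-trivial realisation-shift contribution (handled by $\gamma_\omega(e,g)$) from the purely geometric orientation contribution, and to verify that the latter is captured by the stated $(k+l)|\overline s^x-\overline s^y|$ power of the constants $\gamma_{\max},\gamma_{\min}$, so that the resulting sub-additive defect of $\log\overline\Psi_\omega^k$ is negligible in the sense required for the application of Lemma~\ref{lma:spressure}.
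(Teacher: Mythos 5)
Your proof is correct and follows essentially the same route as the paper: both arguments rest on the multiplicativity of $\alpha_M(\cdot)\alpha_m(\cdot)$ (the determinant relation), the submultiplicativity of $\alpha_M$, the resulting sign change of the per-word defect at the threshold $t=\overline{s}^{x}+\overline{s}^{y}$, and corrections of size $\gamma_{\max}^{(k+l)\lvert\overline{s}^{x}-\overline{s}^{y}\rvert}$ that are almost surely trivial (identically so in the separated case, and because $s^{x}=s^{y}$ a.s.\ in the non-separated case), followed by summing over the bijection between $\mathbf{C}^{k+l}_{\omega}$ and pairs $(e,g)$. The only presentational difference is that you package the paper's case analysis into a single logarithmic defect identity $(t-2s_{12})\beta(e,g)+(s_{12}-s_{1})D(e)+(s_{12}-s_{2})D(g)$, which checks out and cleanly recovers each of the paper's cases.
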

\begin{proof}
We first prove the results concerning the modified singular value function and deal with the separated case, which implies that  $\alpha_{M}(e)$ can only take the values of $h(e)h(g)$ or $w(e)w(g)$, where $h(z)=\lvert\pi_{y}f(z,\Delta)\rvert$ and $w(z)=\lvert\pi_{x}f(z   ,\Delta)\rvert$ are the height and width of the rectangle $f(z,\Delta)$.
Without loss of generality we can assume that $h(e)\geq w(e)$ i.e.\ $\alpha_{M}(e)=h(e)$.
We therefore have the following cases to check 
\begin{enumerate}[label=(\roman*)]
\item\label{case1List1} $\alpha_{M}(g)=h(g)$  and thus $\alpha_{M}(e\odot g)=h(e)h(g)$,

\item\label{case2List1} $\alpha_{M}(g)=w(g)$  and $\alpha_{M}(e\odot g)=h(e)h(g)$,

\item\label{case3List1} $\alpha_{M}(g)=w(g)$ and $\alpha_{M}(e\odot g)=w(e)w(g)$.
\end{enumerate}
In the separated case we define  $\gamma_{\min}=\gamma_{\max}=\gamma_{\omega}(e,g)=1$
and we shall now treat each of the cases above separately.

\ref{case1List1} We have
\begin{align}
\frac{\overline{\psi}_\omega^t(e\odot g)}{\overline{\psi}_\omega^t(e)\overline{\psi}_{\sigma^l\omega}^t(g)}
&=
\frac{(h(e)h(g))^{\overline{s}(e\odot g,F_{\omega})}(w(e)w(g)))^{t-\overline{s}(e\odot g,F_{\omega})}}
{h(e)^{\overline{s}(e,F_{\omega})}w(e)^{t-\overline{s}(e,F_{\omega})}h(g)^{\overline{s}(g,F_{\sigma^{l}\omega})}b(g)^{t-\overline{s}(g,F_{\sigma^{l}\omega})}},
\nonumber\\
&=\frac{(h(e)h(g))^{\overline{s}^{x}(F_{\omega})}(w(e)w(g))^{t-\overline{s}^{x}(F_{\omega})}}
{h(e)^{\overline{s}^{x}(F_{\omega})}w(e)^{t-\overline{s}^{x}(F_{\omega})}h(e)^{\overline{s}^{x}(F_{\omega})}w(g)^{t-\overline{s}^{x}(F_{\omega})}}
=1.\label{separatedRatioDecomposes}
\end{align}

\ref{case2List1} We have 
\begin{align*}
\frac{\overline{\psi}_\omega^t(e\odot g)}{\overline{\psi}_\omega^t(e)\overline{\psi}_{\sigma^l\omega}^t(g)}
&=\frac{(h(e)h(g))^{\overline{s}^{y}(F_{\omega})}(w(e)w(g))^{t-\overline{s}^{y}(F_{\omega})}}{h(e)^{\overline{s}^{y}(F_{\omega})}w(e)^{t-\overline{s}^{y}(F_{\omega})}w(g)^{\overline{s}^{x}(F_{\omega})}h(g)^{t-\overline{s}^{x}(F_{\omega})}}\\
&=\left(\frac{w(g)}{h(g)}\right)^{t-\overline{s}^{x}(F_{\omega})-\overline{s}^{y}(F_{\omega})}=r^{t-\overline{s}^{x}(F_{\omega})-\overline{s}^{y}(F_{\omega})}
\end{align*}
where $r>1$.

\ref{case3List1} We have
\begin{align*}
\frac{\overline{\psi}_\omega^t(e\odot g)}{\overline{\psi}_\omega^t(e)\overline{\psi}_{\sigma^l\omega}^t(g)}
&=\frac{(w(e)w(g))^{\overline{s}^{x}(F_{\omega})}(h(e)h(g))^{t-\overline{s}^{x}(F_{\omega})}}{h(e)^{\overline{s}^{y}(F_{\omega})}w(e)^{t-\overline{s}^{y}(F_{\omega})}w(g)^{\overline{s}^{y}(F_{\omega})}h(g)^{t-\overline{s}^{x}(F_{\omega})}}\\
&=\left(\frac{h(e)}{w(e)}\right)^{t-\overline{s}^{x}(F_{\omega})-\overline{s}^{y}(F_{\omega})}=r^{t-\overline{s}^{x}(F_{\omega})-\overline{s}^{y}(F_{\omega})}
\end{align*}
where $r>1$.

The required cases then follow by letting $t$ take the appropriate values.
\\ \\
For the non-separated case we have almost surely $s^{x}=s^{x}(F_{\omega})=s^{y}(F_{\omega})$. Let 
\[
\gamma_{\min}=\min_{\substack{i\in\Lambda\\h\in\mathbb{I}_{i}}}\frac{\alpha_{m}(h)}{\alpha_{M}(h)}\;\;\text{ and }\;\;\gamma_{\max}=\gamma_{\min}^{-1}.
\]
Note that $\alpha_{m}(e\odot g)\geq\alpha_{m}(e)\alpha_{m}(g)$. Equivalently $\alpha_{m}(e\odot g)=c(e,g)\alpha_{m}(e)\alpha_{m}(g)$ for some $c(e,g)\geq1$ and so we have for all $\omega\in\Lambda^{\N}$,
\begin{align}
\overline{\psi}_\omega^t(e \odot g)&=\alpha_{M}(e\odot g)^{\overline{s}(e\odot g,F_{\omega})}\alpha_{m}(e \odot g)^{t-\overline{s}(e\odot g,F_{\omega})}\nonumber\\
&=(\alpha_{M}(e\odot g)\alpha_{m}(e\odot g))^{\overline{s}(e\odot g,F_{\omega})}\alpha_{m}(e\odot g)^{t-2\overline{s}(e\odot g,F_{\omega})}\nonumber\\
&=(\alpha_{M}(e)\alpha_{M}(g)\alpha_{m}(e)\alpha_{m}(g))^{\overline{s}(e\odot g,F_{\omega})}\alpha_{m}(e\odot g)^{t-2\overline{s}(e\odot g,F_{\omega})}\nonumber\\
&=(\alpha_{M}(e)\alpha_{M}(g)\alpha_{m}(e)\alpha_{m}(g))^{\overline{s}(e\odot g,F_{\omega})}\nonumber\\
&\hspace{3cm}(\alpha_{m}(e)\alpha_{m}(g))^{t-2\overline{s}(e\odot g,F_{\omega})}c(e,g)^{t-2\overline{s}(e\odot g,F_{\omega})}\nonumber\\
&=c(e,g)^{t-2\overline{s}(e\odot g,F_{\omega})}\gamma_{\omega}(e,g)\overline{\psi}_\omega^t(e) \overline{\psi}_{\sigma^l\omega}^t(g),\label{eqn:individualbound}
\end{align}
for some $\gamma_{\omega}(e,g)\in\left[\gamma_{\min}^{\lvert e\odot g\rvert\cdot\lvert \overline{s}^{x}(F_{\omega})-\overline{s}^{y}(F_{\omega})\rvert},\gamma_{\max}^{\lvert e\odot g\rvert\cdot\lvert \overline{s}^{x}(F_{\omega})-\overline{s}^{y}(F_{\omega})\rvert}\right]$.
The required inequalities follow again by letting $t$ take appropriate values.
Note that, while $\gamma_{\omega}(e,g)=1$ holds for all $\omega\in\Lambda^{\N}$ in the separated case, in the non-separated case the equality holds only for $\omega\in\mathfrak{G}$, where $\mathfrak{G}$ is the full measure set on which ${s}^{x}$ and ${s}^{y}$ exist and coincide (guaranteed to exist by Lemma~\ref{lma:almostsureprojections} for RIFS of the non-separated type).
As it turns out this does not make a difference to the convergence of $P$.
\\ \\
We now move on to proving the inequalities involving $\overline{\Psi}_{\omega}^{k+l}(s)$. We have from (\ref{eqn:individualbound}) for $e\in\mathbf{C}_\omega^k$ and $g\in\mathbf{C}_{\sigma^k\omega}^{l}$ such that $e\odot g \in \mathbf{C}_\omega^{k+l}$ and $t\in [0,s_x+s_y]$,
\begin{align*}
\overline{\Psi}_\omega^{k+l}(t)&=\sum_{e\odot g \in \mathbf{C}_\omega^{k+l}}\overline{\psi}_\omega^t(e\odot g)\\
&=\sum_{e\in\mathbf{C}_\omega^k}\sum_{g\in\mathbf{C}_{\sigma^k\omega}^{l}}\overline{\psi}_\omega^t(e\odot g)
\leq\sum_{e\in\mathbf{C}_\omega^k}\sum_{g\in\mathbf{C}_{\sigma^k\omega}^{l}}\gamma_{\omega}(e,g)\overline{\psi}_\omega^t(e) \overline{\psi}_{\sigma^k\omega}^t(g)\\
&\leq\gamma_{\max}^{(k+l)\lvert \overline{s}^{x}(F_{\omega})-\overline{s}^{y}(F_{\omega})\rvert}\left(\sum_{e\in\mathbf{C}_\omega^k}\overline{\psi}_\omega^t(e)\right)\left(\sum_{g\in\mathbf{C}_{\sigma^k\omega}^{l}}\overline{\psi}_{\sigma^k\omega}^t(g)\right)
\\&=\gamma_{\max}^{(k+l)\lvert \overline{s}^{x}(F_{\omega})-\overline{s}^{y}(F_{\omega})\rvert}\overline{\Psi}_\omega^k(t)\overline{\Psi}_{\sigma^k\omega}^l(t).
\end{align*}
The other inequality follows by a similar argument, again noting that \[\gamma_{\max}^{(k+l)\lvert \overline{s}^{x}(F_{\omega})-\overline{s}^{y}(F_{\omega})\rvert}=1\] for all $\omega\in\Lambda^{\N}$ in the separated case and for all $\omega\in\mathfrak{G}$ in the non-separated case.
\end{proof}

For the proof of Lemma~\ref{lma:spressure} we require the following proposition, a variant of the \emph{subadditive ergodic theorem}, originally due to Kingman~\cite{Kingman73}.
\begin{prop}[Derriennic,~\cite{Derriennic83}]\label{KingmanProp}
Let $X_{m}(\omega)$ be a family of measurable and integrable random variables on a probability space $(\Omega,\mu)$. Assume that \[\inf_{n}(1/n)\E(X_{n})>-\infty\] and that the subadditive defects are bounded by a family of positive random variables $Y_{m}$, that is
\[
X_{n+m}(\omega)-X_{n}(\omega)-X_{m}(\sigma^{n}\omega)\leq Y_{m}(\sigma^{n}\omega)\;\; \text{a.s.},
\]
for all $n,m$ with $\sup_{m} \E Y_{m}<\infty$. Then there exists a $\mu$-invariant random variable $\overline Y$ such that $\lim_{n}(1/n)X_{n}=\overline Y$ and $\lim_{n}(1/n)\E X_{n}=\E\overline Y$.
\end{prop}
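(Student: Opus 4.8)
The plan is to deduce the statement from the classical subadditive ergodic theorem of Kingman~\cite{Kingman73} together with Birkhoff's ergodic theorem, the point being to organise the decomposition at a \emph{fixed} block scale so that the random defect enters only through the uniformly bounded means $\E Y_m$. Write $b=\sup_m\E Y_m<\infty$ and take $\sigma$ to be measure preserving, as is implicit. The convergence of the means is immediate: taking expectations in the defect inequality and using $\sigma$-invariance of $\mu$ gives $\E X_{n+m}\le\E X_n+\E X_m+b$, so $a_n:=\E X_n+b$ is subadditive and Fekete's lemma yields $\gamma:=\lim_n(1/n)\E X_n=\inf_n a_n/n$, which is finite by the hypothesis $\inf_n(1/n)\E X_n>-\infty$. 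It therefore remains to produce a $\sigma$-invariant $\overline Y$ with $(1/n)X_n\to\overline Y$ almost surely and $\E\overline Y=\gamma$.

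For the upper bound I adapt Kingman's argument at a fixed scale $N$. Peeling blocks of length $N$ off the right, and writing $n=qN+r$ with $0\le r<N$, repeated use of the defect inequality gives
\[
X_{n}\le X_r+\sum_{i=0}^{q-1}X_N\circ\sigma^{iN+r}+\sum_{i=0}^{q-1}Y_N\circ\sigma^{iN+r}.
\]
The decisive feature is that both sums are now Birkhoff averages of the fixed integrable functions $X_N$ and $Y_N$ under $\sigma^{N}$. Dividing by $n$, applying Birkhoff's theorem, and handling the remainder $r$ routinely gives, almost surely,
\[
\overline X:=\limsup_n\tfrac1n X_n\le\tfrac1N \E(X_N\mid\mathcal I_N)+\tfrac1N \E(Y_N\mid\mathcal I_N),
\]
where $\mathcal I_N$ is the $\sigma^{N}$-invariant $\sigma$-field. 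Taking expectations yields $\E\overline X\le\tfrac1N \E X_N+\tfrac{b}{N}$, and letting $N\to\infty$ gives $\E\overline X\le\gamma$. A matching lower bound $\E\underline X\ge\gamma$ for $\underline X:=\liminf_n\tfrac1n X_n$ is obtained by the Katznelson--Weiss stopping-time decomposition, cut at the same scale $N$ so that at most $\lceil n/N\rceil$ concatenations occur and the accumulated defect contributes at most $b/N$ to the mean. Since $\underline X\le\overline X$ pointwise, the chain $\gamma\le\E\underline X\le\E\overline X\le\gamma$ forces $\underline X=\overline X=:\overline Y$ almost surely; that $\overline Y$ is $\sigma$-invariant and satisfies $\E\overline Y=\gamma$ then follows as in Kingman's theorem, the single-step discrepancy being negligible because $Y_1\circ\sigma^{n}/n\to0$ almost surely by Birkhoff applied to the integrable $Y_1$.

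The main obstacle is the almost sure control of the accumulated defects. The naive bound replaces each concatenation error by $\sup_m Y_m$, but this supremum need not be integrable, so it cannot be dominated by a Birkhoff average of a single function. The resolution, used in both the upper and lower estimates above, is to fix the block length $N$ in advance: peeling or cutting into $O(n/N)$ blocks of length $N$ forces the defects to appear as a Birkhoff average of the one integrable function $Y_N$, whose contribution has mean at most $b/N$ and hence vanishes as $N\to\infty$. This is exactly the device that converts the uniform control $\sup_m\E Y_m<\infty$ of the means into almost sure sublinearity of the defect, and it is where the hypothesis is used in an essential way.
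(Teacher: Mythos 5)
The paper never proves Proposition~\ref{KingmanProp}: it is Derriennic's almost-subadditive ergodic theorem, imported verbatim with the citation and then used as a black box in the proof of Lemma~\ref{lma:spressure}. So your attempt has to stand on its own, and it does not quite. Its skeleton --- Fekete applied to $\E X_n+b$ for the means, then a sandwich $\E\overline X\le\gamma\le\E\underline X$ at a fixed block scale $N$ --- is the right one, and the mean-convergence step is correct. The upper bound, however, contains a real slip: in your peeling $X_n\le X_r+\sum_{i<q}(X_N+Y_N)\circ\sigma^{iN+r}$ the Birkhoff averages for $\sigma^N$ are based at the point $\sigma^r\omega$ with $r=n\bmod N$ varying, so what actually follows is $\overline X\le\tfrac1N\max_{0\le r<N}\bigl[\E(X_N+Y_N\mid\mathcal I_N)\circ\sigma^r\bigr]$. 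Since $\E(\,\cdot\mid\mathcal I_N)$ is only $\sigma^N$-invariant, the expectation of this maximum can be of order $\E(X_N+Y_N)$ rather than $\tfrac1N\E(X_N+Y_N)$ (rotation on $\mathbb{Z}/2N\mathbb{Z}$ with $G=\mathbf{1}_{\{0,N\}}$ gives $\E[\max_r G\circ\sigma^r]=N\,\E G$), which destroys exactly the factor $1/N$ your argument needs. This is repairable: put the remainder on the \emph{right}, $X_n\le X_{qN}+(X_r+Y_r)\circ\sigma^{qN}$, then peel $X_{qN}$ into $N$-blocks from the right; the Birkhoff sums are then based at $\omega$ itself and the remainder is $o(n)$ a.s.\ by Birkhoff applied to the single integrable function $\max_{0\le t<N}(\lvert X_t\rvert+Y_t)$.

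The genuine gap is the lower bound $\E\underline X\ge\gamma$. First, the Katznelson--Weiss decomposition requires $\underline X$ (or its truncation) to be $\sigma$-invariant, so that the block contributions $\underline X(\sigma^{\tau_k}\omega)$ can be replaced by $\underline X(\omega)$. For genuinely subadditive sequences this follows from $X_{n+1}\le X_1+X_n\circ\sigma$; here that inequality reads $X_{n+1}(\omega)\le X_1(\omega)+X_n(\sigma\omega)+Y_n(\sigma\omega)$ with defect index $n$ \emph{growing}, and $\sup_m\E Y_m<\infty$ does not give $Y_n/n\to0$ a.s.\ (take $Y_n=n\mathbf{1}_{A_n}$ with independent $A_n$, $\mu(A_n)=1/n$; then $\limsup_n Y_n/n\ge1$ a.s.), while the Fatou fact $\liminf_n Y_n/n=0$ a.s.\ is too weak for a liminf comparison. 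Your appeal to ``$Y_1\circ\sigma^n/n\to0$'' is true but addresses the wrong inequality: it controls peeling one step off the \emph{right} (comparing $X_{n+1}(\omega)$ with $X_n(\omega)$ at the same base point), not the left-peeling that invariance needs. Second, the accounting ``at most $\lceil n/N\rceil$ concatenations, contributing at most $b/N$ to the mean'' is unjustified: the blocks start at random times $\tau_k$ that depend on the entire future, so $\E[Y_N\circ\sigma^{\tau_k}]$ cannot be bounded by $\E Y_N$; the family $\{Y_m\}$ is not assumed uniformly integrable, so the sum of the $n/N$ largest values of $Y_N$ along an orbit segment of length $n$ can have mean of order $bn$, not $bn/N$ (and bad points force length-one blocks, so the count $\lceil n/N\rceil$ is wrong as well). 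A correct version needs an extra device you do not supply --- for instance, build the Markov bound $Y_N\le b/\delta$ into the definition of the good set, so each good block's defect is pointwise at most $b/\delta$ and contributes at most $b/(\delta N)$ per unit length once $N\gg b/\delta$, while the rare bad steps contribute $\E[Y_1;G^c]$, which is small by absolute continuity of the integral of the single function $Y_1$. As written, the second half of the argument does not go through, and with it the claimed identification $\underline X=\overline X=\overline Y$.
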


\begin{proof}[Proof (of Lemma~\ref{lma:spressure})]
Let $t\in[0,\overline{s}^{x}(F_{\omega})+\overline{s}^{y}(F_{\omega})]$, and consider $\log\overline{\Psi}_{\omega}^{k}(t)$. 
We first note that $\log\overline{\Psi}_{\omega}^{k}(t)$ is a (measurable) random variable. Since there exists $c>0$ such that $\overline{\psi}_{\omega}^{t}(x)\geq c$ for all $i$ and $x\in\mathbb{I}_{i}$, we can write
\[
\inf_{k}\frac{\E\log\overline{\Psi}_{\omega}^{k}(t)}{k}\geq\inf_{k}\frac{\log c^{k}}{k}=\log c>-\infty.
\]
Hence the condition on bounded infimum is satisfied.
Obviously the shift map $\sigma$ is an invariant and ergodic transformation on $\Omega=\Lambda^{\N}$, and the subadditive defect is
\begin{align*}
\log\overline{\Psi}_{\omega}^{k+l}(t)-\log\overline{\Psi}_{\omega}^{k}(t)-\log\overline{\Psi}_{\sigma^{k}\omega}^{l}(t)
&\leq
\log\gamma_{\max}^{(k+l)\lvert \overline{s}^{x}(F_{\omega})-\overline{s}^{y}(F_{\omega})\rvert}\\
&=0\;\;\text{(a.s.)}&&\text{(by Lemma~\ref{lma:stochAdditivity})}
\end{align*}
and therefore $\overline{P}_{\omega}(t)$ converges to some random variable $\widehat{P}_{\omega}$ almost surely. Using ergodicity of $\sigma$ we can conclude that $\widehat{P}_{\omega}$ is almost surely constant and so, almost surely,
\begin{equation}\label{infimumPressure}
\overline P_{\omega}(t)=\lim_{k}\exp\left(\frac{\E\log\overline{\Psi}_{\omega}^{k}}{k}\right).
\end{equation}
The case for $t>\overline{s}^{x}(F_{\omega})+\overline{s}^{y}(F_{\omega})$ follows by considering the stochastically subadditive sequence of $-\log\overline{\Psi}_{\omega}^{k}$ and we will omit details here. We simply comment that $\overline\Psi_{\omega}^{k}=\underline\Psi_{\omega}^{k}$ (a.s.) and so $\underline P_{\omega}(t)=\overline P_{\omega}(t)$ almost surely, and we denote this common, almost sure, constant value by $P(t)$.
\end{proof}

\begin{lma}\label{continuityLemma}
The $s$-pressure $P(s)$ is strictly decreasing and continuous in $s$, and there exists unique $s_{B}$ such that $P(s_{B})=1$.
\end{lma}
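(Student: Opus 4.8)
The plan is to pass to the logarithm $Q(s)=\log P(s)$, which by Lemma~\ref{lma:spressure} and equation~(\ref{infimumPressure}) is a genuine limit, $Q(s)=\lim_{k\to\infty}\frac1k\E\log\overline{\Psi}_\omega^k(s)$, and to transfer monotonicity and continuity from the individual singular value functions up to this limit. The crucial observation is that for a fixed word $e$ and exponents $s_1<s_2$ the ratio collapses,
\[
\frac{\overline{\psi}_\omega^{s_2}(e)}{\overline{\psi}_\omega^{s_1}(e)}=\alpha_{m}(e)^{\,s_2-s_1},
\]
because the factor $\alpha_{M}(e)^{\overline{s}(e,F_\omega)}$ and all dependence on $\overline{s}(e,F_\omega)$ cancel. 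Thus I only need uniform control of the shortest edge $\alpha_{m}(e)$. Writing $\rho_{\min}$ and $\rho_{\max}$ for the smallest and largest of the finitely many diagonal contraction factors $a_i^j,b_i^j$, one has $0<\rho_{\min}\le\rho_{\max}<1$, and since every $e\in\mathbf{C}_\omega^k$ is a product of $k$ generators, both edges of $f(e,\Delta)$ are products of $k$ such factors, so that $\rho_{\min}^k\le\alpha_{m}(e)\le\rho_{\max}^k$ for every $\omega$ and every $e\in\mathbf{C}_\omega^k$.

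Combining the ratio identity with these bounds gives, for $0\le s_1<s_2$ and every $\omega$,
\[
\rho_{\min}^{k(s_2-s_1)}\,\overline{\Psi}_\omega^k(s_1)\le\overline{\Psi}_\omega^k(s_2)\le\rho_{\max}^{k(s_2-s_1)}\,\overline{\Psi}_\omega^k(s_1).
\]
Applying $\tfrac1k\log$, then expectation, and letting $k\to\infty$ (the limit existing by~(\ref{infimumPressure})), I obtain
\[
(s_2-s_1)\log\rho_{\min}+Q(s_1)\le Q(s_2)\le (s_2-s_1)\log\rho_{\max}+Q(s_1).
\]
Since $\log\rho_{\max}<0$, the right-hand inequality forces $Q(s_2)<Q(s_1)$, so $Q$, and hence $P=\exp Q$, is strictly decreasing; the two inequalities together yield $|Q(s_1)-Q(s_2)|\le |s_2-s_1|\,|\log\rho_{\min}|$, so $Q$ is Lipschitz and $P$ is continuous.

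For existence and uniqueness of $s_B$ I would examine the endpoints. At $s=0$ every term satisfies $\overline{\psi}_\omega^0(e)=(\alpha_{M}(e)/\alpha_{m}(e))^{\overline{s}(e,F_\omega)}\ge1$, whence $\overline{\Psi}_\omega^k(0)\ge1$ and $P(0)\ge1$. For large $s$, using $\overline{s}(e,F_\omega)\in[0,1]$ (the projections lie in $\R$) together with $\alpha_{M}(e),\alpha_{m}(e)\le\rho_{\max}^k$ gives, for $s\ge1$, the bound $\overline{\psi}_\omega^s(e)\le\rho_{\max}^{ks}$; since $\#\mathbf{C}_\omega^k\le K^k$ with $K=\max_{i}\#\mathbb{I}_i$, this yields $\overline{\Psi}_\omega^k(s)\le K^k\rho_{\max}^{ks}$ and hence $Q(s)\le\log K+s\log\rho_{\max}\to-\infty$, so $P(s)\to0$. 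Strict monotonicity and continuity of $P$, together with $P(0)\ge1$ and $P(s)\to0$, give via the intermediate value theorem a unique $s_B\in\R_0^+$ with $P(s_B)=1$. I expect the only real care to lie in the bookkeeping for $\rho_{\min}^k\le\alpha_m(e)\le\rho_{\max}^k$ and in the large-$s$ estimate, where the essential point is that $\overline{s}(e,F_\omega)$ remains in $[0,1]$; everything else is immediate from the ratio identity and the already-established convergence of the pressure.
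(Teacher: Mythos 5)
Your proof is correct and follows essentially the same route as the paper's: both arguments sandwich $\overline{\Psi}_\omega^k(s_2)$ between $\rho_{\min}^{k(s_2-s_1)}\overline{\Psi}_\omega^k(s_1)$ and $\rho_{\max}^{k(s_2-s_1)}\overline{\Psi}_\omega^k(s_1)$ using the uniform bounds on $\alpha_m(e)$, deduce continuity and strict monotonicity, and then check the endpoints $P(0)\geq 1$ and $P(s)\to 0$. The only cosmetic difference is that the paper establishes the slightly stronger bound $P(0)\geq\exp\E(\log\#\mathbb{I}_{\omega_1})>1$ from the standing assumption that some IFS has two maps, whereas your $P(0)\geq 1$ already suffices for the intermediate value argument.
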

\begin{proof}
Let $\underline \alpha=\min_{e\in\mathbb{I}_{i},i\in\Lambda}\alpha_{m}(e)$ and $\overline \alpha=\max_{e\in\mathbb{I}_{i},i\in\Lambda}\alpha_{M}(e)$. We have for $\epsilon>0$,
\begin{align*}
P(s+\epsilon)&=\ess\lim_{k\to\infty}\left(\sum_{e\in\mathbf{C}_{\omega}^{k}}\alpha_{M}(e)^{\overline{s}(e,F_{\omega})}\alpha_{m}(e)^{s+\epsilon-\overline{s}(e,F_{\omega})}\right)^{1/k}\\
&\leq\ess\lim_{k\to\infty}\left(\sum_{e\in\mathbf{C}_{\omega}^{k}}\overline{\alpha}^{k\epsilon}\alpha_{M}(e)^{\overline{s}(e,F_{\omega})}\alpha_{m}(e)^{s-\overline{s}(e,F_{\omega})}\right)^{1/k}=\overline{\alpha}^{\epsilon}P(s).
\end{align*}
Similarly we can establish the lower bound to get 
\[
\underline{\alpha}^{\epsilon}P(s)\leq P(s+\epsilon)\leq\overline{\alpha}^{\epsilon}P(s).
\]
It immediately follows that $P$ is continuous and strictly decreasing. Furthermore, letting $\epsilon\to\infty $ we can see that $P(s)\to 0$ as $s\to \infty$. Finally consider 
\begin{align*}
P(0)&=\ess\lim_{k\to\infty}\left(\sum_{e\in\mathbf{C}_{\omega}^{k}}\alpha_{M}(e)^{\overline{s}(e,F_{\omega})}\alpha_{m}(e)^{-\overline{s}(e,F_{\omega})}\right)^{1/k}\\
&\geq \ess\lim_{k\to\infty}\left(\sum_{e\in\mathbf{C}_{\omega}^{k}}1\right)^{1/k}=\exp\E(\log(\#\mathbb{I}_{i}))>1.
\end{align*}
The last inequality follows by our assumption that at least one of our IFSs contains two maps. We can therefore conclude that $s_{B}$ in~(\ref{unityPressure}) is unique, which concludes the proof.
\end{proof}

To prove that $s_{B}$ is the almost sure box-counting dimension of $F_{\omega}$ we first define a useful stopping.
\begin{defn}
For $0<\delta\leq1$ we define the \emph{$\delta$-stopping}:
\[ 
\Xi_{\omega}^{\delta}=\{e=e_{1}\dots e_{\lvert e\rvert}\in\mathbf{C}_{\omega}^* \mid \alpha_m(e)\leq\delta \text{ and }\alpha_m(e_{1}e_{2}\dots e_{\lvert e\rvert-1})>\delta\},
\]
\end{defn}
This is the collection of arrangements of words such that their associated rectangle has shorter side comparable to $\delta$, i.e.\ for $e\in \Xi_{\omega}^{\delta}$,
\begin{equation}\label{alphaboundsEq}
\underline\alpha\delta<\alpha_m(e)\leq\delta.
\end{equation}

We can now prove the upper bound of (\ref{mainEq}).
\begin{lma}\label{upperBoundLemma}
Let $F_{\omega}$ be the attractor of a box-like self-affine random iterated function system. Irrespective of overlaps, almost surely,
\[
\dim_{B}F_{\omega}\leq s_{B}.
\]
\end{lma}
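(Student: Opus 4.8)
The plan is to fix $s$ with $s_B<s\le\overline{s}^x(F_\omega)+\overline{s}^y(F_\omega)$, prove that $\overline\dim_B F_\omega\le s+\epsilon$ almost surely for every $\epsilon>0$, and then let $\epsilon\downarrow0$ and $s\downarrow s_B$. Since the box-counting dimension is unchanged if diameter-$\delta$ covers are replaced by the $\delta$-mesh counting function $M_\delta$, it suffices to bound $M_\delta(F_\omega)$. The natural cover comes from the $\delta$-stopping: because $\alpha_m$ of a prefix tends to $0$ along every infinite word, $\Xi_\omega^\delta$ is a finite complete antichain, and hence, irrespective of overlaps,
\[
F_\omega=\bigcup_{e\in\Xi_\omega^\delta}f\big(e,F_{\sigma^{|e|}\omega}\big),
\]
so that $M_\delta(F_\omega)\le\sum_{e\in\Xi_\omega^\delta}M_\delta\big(f(e,F_{\sigma^{|e|}\omega})\big)$. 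Note that only this union structure, not disjointness, is used, which is why the statement holds without the UORC.

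For each stopping word $e$ the set $f(e,F_{\sigma^{|e|}\omega})$ lies in the axis-aligned rectangle $f(e,\Delta)$, whose short side is comparable to $\delta$ by \eqref{alphaboundsEq} and whose long side is $\alpha_M(e)$. Covering this rectangle with the $\delta$-mesh costs $O(1)$ squares across the short direction, so $M_\delta(f(e,F_{\sigma^{|e|}\omega}))$ is controlled, up to a constant, by the number of $\delta$-intervals meeting the projection $\pi_e f(e,F_{\sigma^{|e|}\omega})$, which is an affine copy of $\pi_e F_{\sigma^{|e|}\omega}$ scaled by $\alpha_M(e)$. By Lemma~\ref{lma:almostsureprojections} this projection is a self-similar RGDS whose box-counting dimension equals the almost sure constant $\overline s(e,F_\omega)\in\{s^x,s^y\}$, so the target estimate is, for any $\epsilon>0$,
\[
M_\delta\big(f(e,F_{\sigma^{|e|}\omega})\big)\lesssim\Big(\tfrac{\alpha_M(e)}{\delta}\Big)^{\overline s(e,F_\omega)+\epsilon}.
\]
Using $\alpha_m(e)\asymp\delta$ and the definition of $\overline\psi_\omega^s$, the right-hand side is $\lesssim\delta^{-s-\epsilon}\,\overline\psi_\omega^s(e)$, since $\alpha_M(e)^{\overline s(e,F_\omega)}\alpha_m(e)^{-\overline s(e,F_\omega)}=\overline\psi_\omega^s(e)\,\alpha_m(e)^{-s}$ and $\alpha_M(e)^\epsilon\le1$.

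Summing over the stopping set gives $M_\delta(F_\omega)\lesssim\delta^{-s-\epsilon}\sum_{e\in\Xi_\omega^\delta}\overline\psi_\omega^s(e)$. Grouping stopping words by length $n$, each length class is a subset of the level-$n$ words $\mathbf{C}_\omega^n$, and since $\overline\psi_\omega^s\ge0$ one has $\sum_{e\in\Xi_\omega^\delta}\overline\psi_\omega^s(e)\le\sum_{n=n_{\min}}^{n_{\max}}\overline\Psi_\omega^n(s)$, where $n_{\min},n_{\max}\asymp\log(1/\delta)\to\infty$ as $\delta\to0$. Because $s>s_B$ forces $P(s)<1$ by Lemmas~\ref{lma:spressure} and \ref{continuityLemma}, we have $(\overline\Psi_\omega^n(s))^{1/n}\to P(s)<1$ almost surely, so $\overline\Psi_\omega^n(s)\le\rho^n$ eventually for some fixed $\rho\in(P(s),1)$; hence $\sum_{n\ge n_{\min}}\rho^n$ stays bounded (and in fact tends to $0$) as $\delta\to0$. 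This yields $M_\delta(F_\omega)\lesssim\delta^{-s-\epsilon}$, so $\overline\dim_B F_\omega\le s+\epsilon$ almost surely, and the limits complete the argument.

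The main obstacle is the uniform per-piece covering estimate. For fixed $\omega$ and $\delta$ the stopping words of a given length $n$ all share the single tail $\sigma^n\omega$, but as $\delta\to0$ one needs $N_\eta(\pi_e F_{\sigma^n\omega})\lesssim\eta^{-(\overline s(e,F_\omega)+\epsilon)}$ at scales $\eta=\delta/\alpha_M(e)$ simultaneously for tails $\sigma^n\omega$ with $n\to\infty$. The box-counting dimension of each tail projection is the correct almost sure constant by Lemma~\ref{lma:almostsureprojections}, yet the implied covering constant (equivalently, the scale beyond which the bound becomes valid) a priori depends on the tail. I would make this uniform by exploiting the almost sure geometric decay of the projection pressure for the associated $1$-variable RGDS (Troscheit~\cite{Troscheit15}), which controls the level-$m$ covering sums of $\pi_e F_{\sigma^n\omega}$ with exponent $\overline s(e,F_\omega)+\epsilon$, and then applying a Borel--Cantelli argument along the orbit $(\sigma^n\omega)_n$ to bound the thresholds, so that the per-piece estimate is valid for every relevant $n$ once $\delta$ is small.
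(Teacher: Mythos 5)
Your argument is essentially the paper's own proof: decompose $F_\omega$ over the $\delta$-stopping $\Xi_\omega^\delta$, cover each piece $f(e,F_{\sigma^{|e|}\omega})$ via the projection $\pi_e F_{\sigma^{|e|}\omega}$ at scale $\delta/\alpha_M(e)$, rewrite the resulting bound as $\delta^{-s-\epsilon}\sum_{e}\overline\psi_\omega^{s}(e)$, and kill the sum using the geometric decay of $\overline\Psi_\omega^k(s)$ for $s>s_B$. The uniformity issue you flag is real but not a departure: the paper simply writes a single almost surely finite constant $C_\omega$ for the covering estimate of $\pi_e F_{\sigma^{|e|}\omega}$ even though it depends on the shifted realisation, so your Borel--Cantelli remark makes explicit a point the published proof glosses over.
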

\begin{proof}
For $e\in\mathbf{C}_{\omega}^{*}$ define 
\[
F_{\omega}(e)=f(e,F_{\sigma^{\lvert e\rvert}\omega}).
\]
Let $\epsilon>0$ be arbitrary and let $\{U_{e,i}\}_{i=1}^{N_{\epsilon}(F_{\omega}(e))}$ be a minimal $\epsilon$-cover of $F_{\omega}(e)$; then
\begin{equation}\label{coveringEstimate}
F_{\omega}\subseteq\bigcup_{e\in\Xi_{\omega}^{\epsilon}}\bigcup_{i=1}^{N_{\epsilon}(F_{\omega}(e))} U_{e,i}.
\end{equation}
Recall that $\pi_{e}$ is the projection onto the axes parallel to the longest side of $f(e,\Delta)$. Using~(\ref{coveringEstimate}) we get
\begin{align*}
N_{\epsilon}(F_{\omega})\leq\sum_{e\in\Xi_{\omega}^{\epsilon}}N_{\epsilon}(F_{\omega}(e))
=\sum_{e\in\Xi_{\omega}^{\epsilon}}N_{\epsilon}(f(e,F_{\sigma^{\lvert e\rvert}\omega}))
=\sum_{e\in\Xi_{\omega}^{\epsilon}}N_{\epsilon}(f(e,\pi_{e}F_{\sigma^{\lvert e\rvert}\omega})),
\end{align*}
since the rectangles have shortest length equal to $\epsilon$ or less.
Now notice that for $\epsilon>0$ there exists $C_{\omega}>0$ such that
\[
C_{\omega}^{-1}r^{-\underline{s}_{\omega}(e)+\epsilon/2}\leq N_{r}(\pi_{e}F_{\omega})\leq C_{\omega}r^{-\overline{s}_{\omega}(e)-\epsilon/2},
\]
with $0< C_{\omega}<\infty$ holding almost surely.
We thus get 
\[
N_{\epsilon}(F_{\omega})\leq\sum_{e\in\Xi_{\omega}^{\epsilon}}N_{\epsilon/\alpha_{M}(e)}((\pi_{e}F_{\sigma^{\lvert e\rvert}\omega}))
\leq C_{\omega}\sum_{e\in\Xi_{\omega}^{\epsilon}}\left(\frac{\epsilon}{\alpha_{M}(e)}\right)^{- \overline{s}(e,F_{\sigma^{\lvert e\rvert}\omega})-\epsilon/2},
\]
and as $\underline \alpha \epsilon<\alpha_{m}(e)$ for all $e\in\Xi_{\omega}^{\epsilon}$ we deduce, for $C^{*}=\underline{\alpha}^{-(s_{B}+\epsilon)}$,
\begin{align*}
N_{\epsilon}(F_{\omega})&\leq C_{\omega}\left(\frac{\alpha_{m}(e)}{\underline{\alpha}\epsilon}\right)^{s_{B}+\epsilon}\sum_{e\in\Xi_{\omega}^{\epsilon}}\left(\frac{\epsilon}{\alpha_{M}(e)}\right)^{- \overline{s}(e,F_{\sigma^{\lvert e\rvert}\omega})-\epsilon/2}\\
&\leq C_{\omega}\underline{\alpha}^{-(s_{B}+\epsilon)}\epsilon^{-(s_{B}+\epsilon)}\sum_{e\in\Xi_{\omega}^{\epsilon}}\alpha_{M}(e)^{\overline{s}(e,F_{\omega})+\epsilon/2}\alpha_{m}(e)^{s_{B}+\epsilon}\epsilon^{-\overline{s}(e,F_{\omega})-\epsilon/2}\\
&\leq C_{\omega}C^{*}\epsilon^{-(s_{B}+\epsilon)}\sum_{e\in\Xi_{\omega}^{\epsilon}}\alpha_{M}(e)^{\overline{s}(e,F_{\omega})+\epsilon/2}\alpha_{m}(e)^{s_{B}+\epsilon-\overline{s}(e,F_{\omega})-\epsilon/2}\\
&\leq C_{\omega}C^{*}\epsilon^{-(s_{B}+\epsilon)}\sum_{e\in\Xi_{\omega}^{\epsilon}}\overline\psi_{\omega}^{s_{B}+\epsilon/2}(e)\\
&\leq C_{\omega}C^{*}\epsilon^{-(s_{B}+\epsilon)}\sum_{k=1}^{\infty}\overline\Psi_{\omega}^{k}(s_{B}+\epsilon/2).
\end{align*}
But since $ P(s+\epsilon)<1$ there exists $C_{\omega}'$ for almost every $\omega$ such that 
\[
\overline\Psi_{\omega}^{k}(s_{B}+\epsilon)\leq C_{\omega}' {P}(s+\epsilon/2)^{k}.
\]
Therefore, almost surely,
\[
N_{\epsilon}(F_{\omega})\leq C^{*}C_{\omega}C_{\omega}'\epsilon^{-(s_{B}+\epsilon)}\sum_{k}P(s+\epsilon/2)^{k}<\infty
\]
and hence $\dim_{B}(N_{\epsilon}(F_{\omega}))\leq s_{B}+\epsilon$ as required.
\end{proof}

For $t<s_{B}$ the sum over the random modified singular function of the elements in the stopping is bounded from below.

\begin{lma}\label{complicatedLowerLemma}
Let 
\[
L^{\delta}_{\omega}(t)=\sum_{e\in\Xi_{\omega}^{\delta}}\overline\psi_\omega^t(e).
\]
Then $P(t)<1$ implies $L^{\delta}_{\omega}(t)<1$ for small enough $\delta$, and $P(t)>1$ implies $L^{\delta}_{\omega}(t)>1$ for small enough $\delta$ almost surely.
\end{lma}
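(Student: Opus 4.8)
The plan is to prove the two implications separately, treating the upper estimate on $L^\delta_\omega(t)$ (the case $P(t)<1$) as the easy direction and the lower estimate (the case $P(t)>1$) as the substantial one. The common starting point is a scale--length bound for the stopping set. Writing $\overline\beta=\max_{i,j}\max\{a_i^j,b_i^j\}<1$ for the largest single-map contraction ratio, every single map satisfies $\underline\alpha\le\alpha_m,\alpha_M\le\overline\beta$, so for a word $e$ of length $k$ both sides of $f(e,\Delta)$ lie in $[\underline\alpha^{\,k},\overline\beta^{\,k}]$. Combining this with the defining inequalities of $\Xi^\delta_\omega$ (that $\alpha_m(e)\le\delta$ while the shortened word exceeds $\delta$, cf.\ (\ref{alphaboundsEq})) yields $n_1\le|e|\le n_2$ for all $e\in\Xi^\delta_\omega$, where $n_1=\lceil\log\delta/\log\underline\alpha\rceil$ and $n_2=1+\lfloor\log\delta/\log\overline\beta\rfloor$; both grow like $\log(1/\delta)$, and in particular $n_1\to\infty$ as $\delta\to0$.

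For the case $P(t)<1$ I would argue directly. Since each $e\in\Xi^\delta_\omega$ sits at a unique level $k\in[n_1,n_2]$ and $\Xi^\delta_\omega\cap\mathbf{C}^k_\omega\subseteq\mathbf{C}^k_\omega$, nonnegativity of $\overline\psi^t_\omega$ gives $L^\delta_\omega(t)\le\sum_{k=n_1}^{n_2}\overline\Psi^k_\omega(t)$. By Lemma~\ref{lma:spressure} we have $\overline\Psi^k_\omega(t)^{1/k}\to P(t)$ almost surely, so fixing $\eta>0$ with $P(t)+\eta<1$ there is $K_0(\omega)$ with $\overline\Psi^k_\omega(t)\le(P(t)+\eta)^k$ for $k\ge K_0(\omega)$. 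Once $\delta$ is small enough that $n_1\ge K_0(\omega)$, the geometric tail gives $L^\delta_\omega(t)\le(P(t)+\eta)^{n_1}/(1-(P(t)+\eta))\to0$, so in particular $L^\delta_\omega(t)<1$ for all small $\delta$, almost surely.

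The case $P(t)>1$ is the heart of the lemma. Here $P(t)>1$ forces $t<s_B\le s^x+s^y$, so we are in the submultiplicative regime of Lemma~\ref{lma:stochAdditivity}, with defect $\gamma_\omega(e,g)=1$ on the full-measure set $\mathfrak G$. The key structural fact is that $\Xi^\delta_\omega$ is a cut: every word of length $n_2$ has a unique prefix in $\Xi^\delta_\omega$. Factoring $\mathbf{C}^{n_2}_\omega$ through this cut and applying submultiplicativity gives $\overline\Psi^{n_2}_\omega(t)\le\sum_{e\in\Xi^\delta_\omega}\overline\psi^t_\omega(e)\,\overline\Psi^{\,n_2-|e|}_{\sigma^{|e|}\omega}(t)$. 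The naive one-step bound $\overline\Psi^j_{\omega'}(t)\le B^j$ (with $B=\max_i\sum_{e\in\mathbb{I}_i}\overline\psi^t_\omega(e)$) is too lossy, since the block lengths $n_2-|e|$ range over an interval of size $n_2-n_1\asymp\log(1/\delta)$ and typically $B>P(t)$. The plan is instead to control the tail factors at the sharp rate $P(t)$: for most $e$ the index $n_2-|e|$ is large and $\overline\Psi^{\,n_2-|e|}_{\sigma^{|e|}\omega}(t)\approx P(t)^{\,n_2-|e|}$, and since $\max_{e\in\Xi^\delta_\omega}(n_2-|e|)\le n_2-n_1$ the total exponent collapses to $n_2\log P(t)-(n_2-n_1)\log P(t)=n_1\log P(t)>0$, so that $L^\delta_\omega(t)\gtrsim P(t)^{n_1}\to\infty$.

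Making this rate sharp \emph{almost surely} (rather than merely in expectation, where the computation above is immediate after using the independence of $\sigma^{|e|}\omega$ from the $\mathcal F_{|e|}$-measurable stopping data) is the main obstacle. I expect the cleanest resolution is to recognise that $\log L^\delta_\omega(t)$ is a subadditive cocycle over the induced stopping-time dynamics: the renewal inequality $L^{\delta_1\delta_2}_\omega(t)\le\sum_{e\in\Xi^{\delta_1}_\omega}\overline\psi^t_\omega(e)\,L^{\delta_2}_{\sigma^{|e|}\omega}(t)$ (again from Lemma~\ref{lma:stochAdditivity}) exhibits the required subadditivity in the variable $-\log\delta$. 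Applying the subadditive ergodic theorem in the form of Proposition~\ref{KingmanProp} then yields an almost sure limit $\tfrac{1}{-\log\delta}\log L^\delta_\omega(t)\to\Lambda(t)$, and identifying $\Lambda(t)=\log P(t)/\chi$, where $\chi$ is the almost sure Lyapunov exponent of $-\log\alpha_m$, shows $\Lambda(t)>0$ exactly when $P(t)>1$, giving $L^\delta_\omega(t)\to\infty$ and in particular $L^\delta_\omega(t)>1$ for small $\delta$. The delicate points in this last step are the variable block lengths in the induced cocycle, absorbing the non-separated defect $\gamma$ on $\mathfrak G$, and proving the identification of the stopping-scale rate with the level rate $\log P(t)$, equivalently that the stopping set and the level sets define the same pressure.
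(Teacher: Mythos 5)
Your $P(t)<1$ direction is correct and complete: since every $e\in\Xi_{\omega}^{\delta}$ has length in $[n_{1},n_{2}]$ with $n_{1}\asymp n_{2}\asymp\log(1/\delta)$, the nonnegativity of $\overline\psi_{\omega}^{t}$ gives $L_{\omega}^{\delta}(t)\leq\sum_{k=n_{1}}^{n_{2}}\overline\Psi_{\omega}^{k}(t)$, and the almost sure convergence $\overline\Psi_{\omega}^{k}(t)^{1/k}\to P(t)<1$ kills this geometrically. This is genuinely more elementary than the paper, which runs both directions through a single matrix comparison argument.

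The $P(t)>1$ direction, however, has a real gap at exactly the point where the work lies. Your factorisation $\overline\Psi_{\omega}^{n_{2}}(t)\leq\sum_{e\in\Xi_{\omega}^{\delta}}\overline\psi_{\omega}^{t}(e)\,\overline\Psi_{\sigma^{|e|}\omega}^{n_{2}-|e|}(t)$ is fine on the full-measure set where the defect vanishes, but to extract $L_{\omega}^{\delta}(t)\gtrsim P(t)^{n_{1}}$ you need an \emph{upper} bound $\overline\Psi_{\sigma^{i}\omega}^{j}(t)\leq C(P(t)+\eta)^{j}$ holding \emph{simultaneously} for all the $\asymp\log(1/\delta)$ shifts $i=|e|$, $e\in\Xi_{\omega}^{\delta}$, as $\delta\to0$. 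The pointwise subadditive ergodic theorem gives the rate for each fixed shift only; submultiplicativity of $\overline\Psi$ in the regime $t\leq s^{x}+s^{y}$ yields \emph{lower} bounds on the shifted blocks, not upper ones; and without a large deviation estimate there is no way to beat the union over $\asymp\log(1/\delta)^{2}$ pairs $(i,j)$. Your "for most $e$" claim is therefore unsubstantiated, and you correctly note that the crude bound $B^{j}$ is too lossy since $(n_{2}-n_{1})/n_{2}$ is bounded away from $0$.

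Your fallback via Kingman on $\log L_{\omega}^{\delta}(t)$ does not close this. First, the renewal inequality $L_{\omega}^{\delta_{1}\delta_{2}}(t)\leq\sum_{e\in\Xi_{\omega}^{\delta_{1}}}\overline\psi_{\omega}^{t}(e)L_{\sigma^{|e|}\omega}^{\delta_{2}}(t)$ is not exact: since $\alpha_{m}(e\odot g)\geq\alpha_{m}(e)\alpha_{m}(g)$ can be strict, the stopping at scale $\delta_{1}\delta_{2}$ does not factor through the two stoppings except up to a bounded number of extra letters, so the inequality only holds after a bounded-distortion repair. Second, and more fundamentally, even granting an almost sure limit $\Lambda(t)$ of $\log L_{\omega}^{\delta}(t)/(-\log\delta)$, the lemma requires knowing the \emph{sign} of $\Lambda(t)$, which is precisely the identification of the stopping-scale growth rate with the level-scale pressure $P(t)$ that you defer as a "delicate point." That identification is the entire content of the paper's proof: it discretises $\delta=\xi^{k}\theta$, encodes the stopping sums as row sums and the level sums as (corridors of) column sums of an infinite matrix $\mathcal{M}_{\omega}^{\xi}(t)$, and uses the bounded increment of matrix entries between consecutive rows to trap $\log L_{\omega}^{\delta}(t)/(-\log\delta)$ between $n_{\max}(\xi)^{\pm1}\log P(t)/(-\log\xi)$ --- a sandwich that determines the sign without ever computing the exact rate. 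Your proposal names the right target but leaves this comparison, which is the hard half of the lemma, unproved.
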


\begin{proof}
We start by introducing the same notation as in~\cite{Troscheit15} and write the stopping $\Xi_{\omega}^{\delta}$ in an infinite matrix fashion. Define the matrix $\widehat{\Xi}_{\omega}^{\delta}$ entrywise for $i,j\in\N$ by 
\[
(\widehat\Xi_{\omega}^{\delta})_{i,j}=\bigsqcup\left\{e\in\Xi_{\sigma^{i-1}\omega}^{\delta}\,:\,\lvert e\rvert=\max\{0,j-i\}\right\}
\]
if there exists $e\in\Xi_{\sigma^{i-1}\omega}^{\delta}$ with $\lvert e\rvert=\max\{0,j-i\}$. Otherwise set $(\widehat\Xi_{\omega}^{\delta})_{i,j}=\varnothing$.
We define the vector $\mathds{1}_{\epsilon_{0}}^{k}$ by
\[
(\mathds{1}_{\epsilon_{0}}^{k})_{i}=
\begin{cases}
\epsilon_{0},&\text{ if }i=k,\\
\varnothing,&\text{ otherwise.}
\end{cases}
\]

Given $0<\xi<1$, for every $0<\delta<1$ there exists a unique $k\in\N_{0}$ such that $\delta=\xi^{k}\theta$, for $\xi<\theta\leq1$. Fix such a $\xi$, we start by showing that 
\begin{equation}\label{theLAsymp}
L_{\omega}^{\delta}(t)=\sum_{e\in(\mathds{1}^{1}_{\epsilon_{0}}\widehat\Xi_{\omega}^{\delta})}\overline{\psi}_{\omega}^{t}(e)
\;\;\;\asymp \sum_{e\in(\mathds{1}^{1}_{\epsilon_{0}}\widehat\Xi_{\omega}^{(\xi^{k})})}\overline{\psi}_{\omega}^{t}(e),
\end{equation}
where we write $g\asymp h$ to indicate $g/h$ and $h/g$ are bounded uniformly away from $0$ in $\delta$ (and thus $k$). The first equality in~(\ref{theLAsymp}) is immediate as 
\[
\bigsqcup_{i}(\mathds{1}^{1}_{\epsilon_{0}}\widehat\Xi_{\omega}^{\delta})_{i}=\Xi_{\omega}^{\delta}.
\] 
For the asymptotic equality note that there exists $c_{1}>0$ so that for all $\delta>0$ and all $e\in\Xi_{\omega}^\delta$ and $g\in\beth^{*}$ such that $e\odot g\in\Xi_{\omega}^\delta$ with $\lvert g\rvert=1$ we have
\begin{equation}\label{boundedSingular}
c_{1}^{-1} \overline{\psi}^{t}_{\omega}(e)\leq\overline{\psi}^{t}_{\omega}(e\odot g)\leq c_{1}\overline{\psi}^{t}_{\omega}(e).
\end{equation}
Now also note that for every $e\in\Xi_{\omega}^{\delta}$ there exists unique $e^{\dagger}\in\Xi_{\omega}^{\xi^{k}}$ and $e^{\ddagger}\in\Xi_{\omega}^{\xi^{k+1}}$ such that the cylinders satisfy
\begin{equation}\label{cylinderInclusions}
[e^{\ddagger}]\subseteq[e]\subseteq[e^{\dagger}].
\end{equation}
 There also exists integer $n_{\max}(\xi)$, independent of $\omega\in\Lambda^{\N}$, such that for all $g\in\Xi_{\omega}^{\xi} $ we have $\lvert g\rvert<n_{\max}(\xi)$ as all maps are contractions. We find the following bounds, where, for $e\in\Xi_{\omega}^{\xi^{k}}$, the $g$ is such that $e\odot g\in\Xi_{\omega}^{\delta}$,
\begin{align*}
\sum_{e\in(\mathds{1}^{1}_{\epsilon_{0}}\widehat\Xi_{\omega}^{\delta})}\hspace{-0.2cm}\overline{\psi}_{\omega}^{t}(e)
&=\sum_{e\in(\mathds{1}^{1}_{\epsilon_{0}}\widehat\Xi_{\omega}^{\xi^{k}})}\sum_{\{g \,\mid\, e\odot g\in\Xi_{\omega}^{\delta}\}}
\hspace{-0.3cm}\overline{\psi}_{\omega}^{t}(e\odot g)\\
&\leq \;\;c_{1}^{n_{\max}(\xi)}\hspace{-0.2cm}\sum_{e\in(\mathds{1}^{1}_{\epsilon_{0}}\widehat\Xi_{\omega}^{\xi^{k}})}\sum_{\{g \,\mid\, e\odot g\in\Xi_{\omega}^{\delta}\}}
\hspace{-0.2cm}\overline{\psi}_{\omega}^{t}(e)
\\&\leq\;\;
(n_{\max}(\xi)c_{1})^{n_{\max}(\xi)}\hspace{-0.2cm}
\sum_{e\in(\mathds{1}^{1}_{\epsilon_{0}}\widehat\Xi_{\omega}^{\xi^{k}})}\overline{\psi}_{\omega}^{t}(e),
\end{align*}
and
\begin{align*}
\sum_{e\in(\mathds{1}^{1}_{\epsilon_{0}}\widehat\Xi_{\omega}^{\delta})}\hspace{-0.2cm}\overline{\psi}_{\omega}^{t}(e)
&=\sum_{e\in(\mathds{1}^{1}_{\epsilon_{0}}\widehat\Xi_{\omega}^{\xi^{k}})}\sum_{\{g \,\mid\, e\odot g\in\Xi_{\omega}^{\delta}\}}
\hspace{-0.3cm}\overline{\psi}_{\omega}^{t}(e\odot g)\\
&\geq c_{1}^{-n_{\max}(\xi)}\hspace{-0.2cm}
\sum_{e\in(\mathds{1}^{1}_{\epsilon_{0}}\widehat\Xi_{\omega}^{(\xi^{k})})}\overline{\psi}_{\omega}^{t}(e)
.
\end{align*}
Hence the asymptotic estimate in~(\ref{theLAsymp}) holds.

Now, by (\ref{boundedSingular}) and (\ref{cylinderInclusions}) we have for some $c_{2}>0$ that $L_{\omega}^{\delta}(t)$ is related to the sum of the modified singular value function over the $\xi$ approximation codings by
\begin{equation}\label{approxCodingBounds}
c_{2}^{-k}\mathfrak{L}^{\delta,k}_{\omega}(t)\leq L_{\omega}^{\delta}(t)\leq c_{2}^{k}\mathfrak{L}^{\delta,k}_{\omega}(t),
\end{equation}
where
\[
\mathfrak{L}^{\delta,k}_{\omega}(t)=\sum_{e\in\tensor*[_{k}]{\widehat\Xi}{_{\omega}^{\xi}}}\overline{\psi}_{\omega}^{t}(e)
\text{ and }\;
\tensor*[_{k}]{\widehat\Xi}{_{\omega}^{\xi}}=\mathds{1}^{k}_{\epsilon_{0}}\underbrace{\widehat\Xi_{\omega}^{\xi}\dots\widehat\Xi_{\omega}^{\xi}}_{k\text{ times}}.
\]
Now consider the infinite matrix $\mathcal{M}_{\omega}^{\xi}(t)$ that is defined for all $i,j\in\N$ by
\[
(\mathcal{M}_{\omega}^{\xi}(t))_{i,j}=\sum_{e\in(\tensor*[_{i}]{\widehat\Xi}{_{\omega}^{\xi}})_{j}}\overline{\psi}_{\omega}^{t}(e).
\]
By definition the sum over all entries in the $k$-th row is $\mathfrak{L}_{\omega}^{\delta,k}$ and we now show that the sums of the $k$-th column is related to $\overline{\Psi}_{\omega}^{k}(t)$, in fact it is easy to see that every entry of the $k$-th column of $\mathcal{M}_{\omega}(t)^{\xi}$ is a lower bound to $\overline{\Psi}_{\omega}^{i}(t)$ as every such entry is given by a word of length $k$. 
 The number of non-empty column entries is at most $n_{\max}(\xi)k$, where $k$ is the column index. Combining this with (\ref{boundedSingular}) and (\ref{cylinderInclusions}) we get for some $c_{3}>0$
\begin{equation}\label{psiupperbound}
\overline{\Psi}_{\omega}^{k}(t)=\sum_{e\in\mathbf{C}_{\omega}^{k}}\overline{\psi}_{\omega}^{t}(e)\geq \frac{c_{3}^{-1}c_{1}^{-n_{\max}(\xi)}}{n_{\max}(\xi)}\sum_{j=0}^{n_{\max}(\xi)-1}\sum_{i=1}^{\infty}(\mathcal{M}_{\omega}^{\xi}(t))_{i,j+k} .
\end{equation}
Similarly for every $e\in\mathbf{C}^{k}_{\omega}$ there exists $g\in(\tensor*[_{k+j}]{\widehat\Xi}{_{\omega}^{\xi}})$ for some $j\in\{0,\dots,n_{\max}(\xi)-1\}$ such that $[e]\subseteq[g]$ and using (\ref{cylinderInclusions}) and (\ref{boundedSingular}) again we get for some $c_{4}>0$ that
\[
\overline{\Psi}_{\omega}^{k}(t)=\sum_{e\in\mathbf{C}_{\omega}^{k}}\overline{\psi}_{\omega}^{t}(e)\leq c_{4}c_{1}^{n_{\max}(\xi)}\sum_{j=0}^{n_{\max}(\xi)-1}\sum_{i=1}^{\infty}(\mathcal{M}_{\omega}^{\xi}(t))_{i,j+k}.
\]
We call $\sum_{j=0}^{n_{\max}(\xi)-1}\sum_{i=1}^{\infty}(\mathcal{M}_{\omega}^{\xi}(t))_{i,j+k}$ the \emph{$n_{\max}(\xi)$-corridor at $k$} and denote by $\mathfrak{C}_{\xi}(l)$ all pairs $(i,j)\in\N^{2}$ such that $l\leq i< l+n_{\max}(\xi)$, that is the pairs in $\mathfrak{C}_{\xi}(l)$ are the coordinates of the $n_{\max}(\xi)$-corridor at $l$.

The final ingredient is to compare the rate of growth of the sum of the singular value function of non-empty column entries with the rate of growth of the sum over non-empty row entries. First notice that the rate of growth of the rows is related to the maximal element in the row by 
\[
\max_{j}(\mathcal{M}_{\omega}^{\xi}(t))_{i,j}\leq
\sum_{j}(\mathcal{M}_{\omega}^{\xi}(t))_{i,j}\leq
n_{\max}(\xi)i \max_{j}(\mathcal{M}_{\omega}^{\xi}(t))_{i,j}.
\]
Note that elements in the matrix cannot increase arbitrarily from row to row, that is we have for some $c_{5}>0$ and all integers $i,j>1$,
\begin{equation}\label{arbitraryIncrease}
(\mathcal{M}_{\omega}^{\xi}(t))_{i,j}\leq c_{5} \max_{k\in\{1,\dots, n_{\max}(\xi)\}}(\mathcal{M}_{\omega}^{\xi}(t))_{j-1,i-k}.
\end{equation}
Combining this with the fact that at least one of $(\mathcal{M}_{\omega}^{\xi}(t))_{j-1,i-k}$ is positive, the maximal element cannot move arbitrarily and for every column $k$ there exists a row $r$ such that
\[
\max_{i\in\{0,\dots, n_{\max}(\xi)\}}(\mathcal{M}_{\omega}^{\xi}(t))_{r,k+i}=\max_{j\in\{0,1,\dots\}}(\mathcal{M}_{\omega}^{\xi}(t))_{r,j}.
\]
But using the existence of $n_{\max}(\xi)$ and that the $\Xi_{\omega}^{\xi}$ do not contain the empty word, we also have 
\(
(\mathcal{M}_{\omega}^{\xi}(t))_{i,j}=0
\)
for $j<i$ and $j>n_{\max}(\xi)i$. We deduce that
\begin{align}
L_{\omega}^{\delta}(t)&\leq c_{2}^{k}\mathfrak{L}_{\omega}^{\delta,k}(t)=c_{2}^{k}\sum_{e\in\tensor*[_{k}]{\widehat\Xi}{_{\omega}^{\xi}}}\overline{\psi}_{\omega}^{t}(e)&&\text{by (\ref{approxCodingBounds})}\nonumber\\
&=c_{2}^{k}\sum_{j}(\mathcal{M}_{\omega}^{\xi}(t))_{k,j}\nonumber\\
&\leq c_{2}^{k}n_{\max}(\xi)k(\mathcal{M}_{\omega}^{\xi}(t))_{k,j_{\max}}&&\text{ for some }j_{\max}\in\{k,\dots,n_{\max}(\xi)k\}\nonumber\\
&\leq c_{2}^{k}n_{\max}(\xi)k\sum_{l=0}^{n_{\max}(\xi)-1}\sum_{i=1}^{\infty}(\mathcal{M}_{\omega}^{\xi}(t))_{i,l+j_{\max}}\nonumber\\
&\leq c_{2}^{k}c_{3}c_{1}^{n_{\max}(\xi)}n_{\max}(\xi)^{2}k\overline{\Psi}_{\omega}^{j_{\max}}(t)&&\text{by (\ref{psiupperbound})}.\nonumber
\end{align}
Similarly, we can derive the lower bound,
\begin{align}
L_{\omega}^{\delta}(t)&\geq c_{2}^{-k}\mathfrak{L}_{\omega}^{\delta,k}(t)=c_{2}^{-k}\sum_{e\in\tensor*[_{k}]{\widehat\Xi}{_{\omega}^{\xi}}}\overline{\psi}_{\omega}^{t}(e)&&\text{by (\ref{approxCodingBounds})}\nonumber\\
&=c_{2}^{-k}\sum_{j}(\mathcal{M}_{\omega}^{\xi}(t))_{k,j}\nonumber\\
&\geq c_{2}^{-k}(\mathcal{M}_{\omega}^{\xi}(t))_{k,j_{\max}}&&\hspace{-4cm}\text{ for maximising }j_{\max}\in\{k,\dots,n_{\max}(\xi)k\}\nonumber\\
&\geq c_{2}^{-k}c_{5}^{-k-j_{\max}-n_{\max}(\xi)}\max_{(i,j)\in\mathfrak{C}_{\xi}(j_{\max})}(\mathcal{M}_{\omega}^{\xi}(t))_{i,j}\label{complicatedDerivation}\\
&\geq \frac{c_{2}^{-k}c_{5}^{-k-n_{\max}(\xi)k-n_{\max}(\xi)}}{n_{\max}(\xi)(j_{\max}+n_{\max}(\xi))}\sum_{j=0}^{n_{\max}(\xi)-1}\sum_{i=1}^{\infty}(\mathcal{M}_{\omega}^{\xi}(t))_{i,j+j_{\max}}\nonumber\\
&\geq \frac{c_{2}^{-k}c_{5}^{-k-n_{\max}(\xi)k-n_{\max}(\xi)}}{ c_{4}c_{1}^{n_{\max}(\xi)}n_{\max}(\xi)(j_{\max}+n_{\max}(\xi))}\overline{\Psi}_{\omega}^{j_{\max}}(t)\nonumber.
\end{align}
The inequality in (\ref{complicatedDerivation}) arises as the maximal element in the $n_{\max}(\xi)$-corridor at $j_{\max}$ must be in one of $j_{\max}+n_{\max}(\xi)$ rows and the maximal element can be no larger than $c_{5}^{j_{\max}+n_{\max}(\xi)}$ times the maximal element in any of the preceding rows by~(\ref{arbitraryIncrease}). 

Thus we get upper and lower bounds to $L_{\omega}^{\delta}(t)$ and we find for some $c_{6},c_{7}>0$ such that
\[
\frac{\log L_{\omega}^{\delta}(t)}{-\log\delta}\leq \frac{\log c_{6}c_{7}^{k}\overline{\Psi}_{\omega}^{j_{\max}}(t)}{-\log\xi^{k}\theta}
\leq\frac{(1/k)\log c_{6}+\log c_{7}+(j_{\max}/k)\log(\overline{\Psi}_{\omega}^{j_{\max}}(t)^{1/j_{\max}})}{-\log\xi}.
\]
Thus, for arbitrary $\epsilon>0$, we can pick $\xi$ small enough such that
\[
\frac{\log L_{\omega}^{\delta}(t)}{-\log\delta}\leq \frac{(j_{\max}/k)\log(\overline{\Psi}_{\omega}^{j_{\max}}(t)^{1/j_{\max}})}{-\log\xi}+\frac{\epsilon}{2}.
\]
Similarly, for some $c_{8},c_{9}>0$ and small enough $\xi$,
\[
\frac{\log L_{\omega}^{\delta}(t)}{-\log\delta}\geq \frac{\log c_{8}c_{9}^{k}\overline{\Psi}_{\omega}^{j_{\max}}(t)}{-k\log\xi}\geq\frac{(j_{\max}/k)\log(\overline{\Psi}_{\omega}^{j_{\max}}(t)^{1/j_{\max}})}{-\log\xi}-\frac{\epsilon}{2}
\]
Observe now that almost surely 
\[
\limsup_{k} (j_{\max}/k)\log(\overline{\Psi}_{\omega}^{j_{\max}}(t)^{1/j_{\max}})\leq \max_{p\in\{+1,-1\}}n_{\max}(\xi)^{p}\log P(t).
\]
Similarly, almost surely,
\[
\liminf_{k} (j_{\max}/k)\log(\overline{\Psi}_{\omega}^{j_{\max}}(t)^{1/j_{\max}})\geq \min_{p\in\{+1,-1\}}n_{\max}(\xi)^{p}\log P(t).
\]
Now as $\epsilon$ was arbitrary we conclude, for small enough $\delta>0$, that 
\[(\log L_{\omega}^{\delta}(t))/(-\log\delta)>0\text{ if }P(t)>1,\] 
and 
\[(\log L_{\omega}^{\delta}(t))/(-\log\delta)<0\text{ if }P(t)<1.
\] 
Therefore the implications in the statement hold.
\end{proof}

\begin{lma}\label{lma:mainLower}
Let $(\mathbb{I},\mathbf{p})$ be a box-like self-affine RIFS that satisfies the uniform open rectangle condition. Let $F_{\omega}$ be the associated $1$-variable random set. Then
\[
\dim_{B}F_{\omega}\geq s_{B}\;\;\;\text{(a.s.)}.
\]
\end{lma}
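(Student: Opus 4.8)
The plan is to prove the matching lower bound $\underline\dim_B F_\omega\geq t$ for each fixed $t<s_B$ and then let $t$ increase to $s_B$ along a countable sequence. Fix $t<s_B$. Since $P$ is strictly decreasing with $P(s_B)=1$ (Lemma~\ref{continuityLemma}), we have $P(t)>1$, so Lemma~\ref{complicatedLowerLemma} supplies $\delta_0(\omega)>0$ with $L^\delta_\omega(t)=\sum_{e\in\Xi^\delta_\omega}\overline\psi^t_\omega(e)>1$ for all $0<\delta<\delta_0$, almost surely. I will turn this into a lower bound on the number $M_\delta(F_\omega)$ of $\delta$-mesh squares meeting $F_\omega$ (comparable to $N_\delta(F_\omega)$), built from the stopping $\Xi^\delta_\omega$.

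First I would record the geometric consequence of the UORC: exactly as in the argument of Lemma~\ref{upperBoundLemma}, using that each $f_i^j$ is an affine homeomorphism with $f_i^j(\Delta)\subseteq\Delta$ (so it carries interiors into interiors) and that two distinct words of $\mathbf{C}^{*}_\omega$ branch at a single letter of one common IFS, one checks that the rectangles $\{f(e,\Delta):e\in\Xi^\delta_\omega\}$ have pairwise disjoint interiors. Next, for each $e\in\Xi^\delta_\omega$ the piece $F_\omega\cap f(e,\Delta)\supseteq f(e,F_{\sigma^{\lvert e\rvert}\omega})$ lies in a band whose short side is $\alpha_m(e)\in(\underline\alpha\delta,\delta]$, hence meets at most two rows of the mesh; counting columns along the long side therefore gives $M_\delta\big(f(e,F_{\sigma^{\lvert e\rvert}\omega})\big)\gtrsim N_{\delta/\alpha_M(e)}(\pi_e F_{\sigma^{\lvert e\rvert}\omega})$. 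By Lemma~\ref{lma:almostsureprojections} the projection $\pi_e F_{\sigma^{\lvert e\rvert}\omega}$ has almost sure box-counting dimension $\underline s(e,F_\omega)=\overline s(e,F_\omega)$ (the quantities $s^x,s^y$ being a.s. constant and shift invariant), so $N_{\delta/\alpha_M(e)}(\pi_e F_{\sigma^{\lvert e\rvert}\omega})\gtrsim(\alpha_M(e)/\delta)^{\overline s(e,F_\omega)}$; since $\alpha_m(e)\asymp\delta$ this reads $M_\delta\big(F_\omega\cap f(e,\Delta)\big)\gtrsim\delta^{-t}\overline\psi^t_\omega(e)$, up to a sub-exponential factor that I absorb by passing to a slightly smaller value of $t$.

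The main obstacle is passing from these per-rectangle counts to a global lower bound, i.e.\ controlling the multiplicity with which a single $\delta$-mesh square is hit by the family $\{f(e,\Delta)\}_{e\in\Xi^\delta_\omega}$. I expect to prove a purely geometric bounded-overlap lemma: there is $C=C(\underline\alpha)$ such that every $\delta$-mesh square meets at most $C$ of these rectangles. This uses only that the rectangles are axis-aligned, have disjoint interiors, and have short side at least $\underline\alpha\delta$; the key observation is that pairwise disjoint intervals of length $\geq\underline\alpha\delta$ meeting a fixed interval of length $\delta$ number at most $\lceil1/\underline\alpha\rceil+1$, applied to the short-side coordinate of each of the boundedly many orientation classes of rectangles crossing a fixed square. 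The delicate bookkeeping is the case where rectangles of both orientations accumulate in one square (which can occur in the non-separated setting).

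Granting the bounded-overlap bound, I would conclude by summing:
\[
M_\delta(F_\omega)\geq\frac1C\sum_{e\in\Xi^\delta_\omega}M_\delta\big(F_\omega\cap f(e,\Delta)\big)\gtrsim\frac{\delta^{-t}}{C}\sum_{e\in\Xi^\delta_\omega}\overline\psi^{t}_\omega(e)=\frac{\delta^{-t}}{C}L^\delta_\omega(t)\geq\frac{\delta^{-t}}{C}
\]
for all sufficiently small $\delta$, almost surely, so that $\liminf_{\delta\to0}\log M_\delta(F_\omega)/(-\log\delta)\geq t$ and hence $\underline\dim_B F_\omega\geq t$ almost surely. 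Intersecting these full-measure events over a sequence $t_n\uparrow s_B$ gives $\dim_B F_\omega\geq s_B$ almost surely, which together with Lemma~\ref{upperBoundLemma} yields $\dim_B F_\omega=s_B$ and completes the proof of Theorem~\ref{thm:mainTheo}.
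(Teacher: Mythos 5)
Your proposal is correct and follows essentially the same route as the paper's proof: the stopping $\Xi^\delta_\omega$, the UORC giving pairwise disjoint open rectangles, a bounded-overlap count of mesh squares against rectangles with short side at least $\underline\alpha\delta$ (the paper uses the explicit constant $(\underline\alpha^{-1}+2)^{2}$), the per-rectangle lower bound via the almost sure box dimension of the projections, and finally Lemma~\ref{complicatedLowerLemma} to make $L^\delta_\omega(t)$ contribute nonnegatively in the exponent. The only cosmetic difference is that the paper works directly with $t=s_B-\epsilon/2$ and carries the sub-exponential correction inside the covering estimate rather than absorbing it by shrinking $t$.
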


\begin{proof}
Let $\delta>0$ and consider the $\delta$-mesh on $\Delta$, denoted by $\mathcal{N}_{\delta}$. Since we assume the uniform open rectangle condition, the open rectangles $\mathcal{C}=\{f_{e}(\mathring\Delta)\}_{e\in\Xi_{\omega}^{\delta}}$ are pairwise disjoint. 
Furthermore the side lengths of the rectangles $R\in\mathcal C$ are bounded below by $\underline \alpha\delta$ by definition and thus the number of rectangles of $\mathcal{C}$ each square in the grid of $\mathcal{N}_{\delta}$ can intersect is at most $C^{-1}=(\underline\alpha^{-1}+2)^{2}$. Thus 
\[
M_{\delta}(F_{\omega})\geq C\sum_{e\in\Xi_{\omega}^{\delta}}N_{\delta}(F_{\omega}(e)).
\]
In a similar fashion to the upper bound proof, Lemma~\ref{upperBoundLemma}, we find
\[
M_{\delta}(F_{\omega})\geq C\sum_{e\in\Xi_{\omega}^{\delta}}N_{\delta/\alpha_{M}(e)}(F_{\sigma^{\lvert e\rvert}\omega})
\geq C C_{\omega}^{-1}\sum_{e\in\Xi_{\omega}^{\epsilon}}\left(\frac{\delta}{\alpha_{M}(e)}\right)^{-\underline s(e,F_{\omega})+\epsilon/2}.
\]
Using~(\ref{alphaboundsEq}),
\begin{align*}
M_{\delta}(F_{\omega})&\geq C C_{\omega}^{-1}\sum_{e\in\Xi_{\omega}^{\epsilon}}\left(\frac{\underline\alpha^{-1} \alpha_{m}(e)}{\alpha_{M}(e)}\right)^{-\underline s(e,F_{\omega})+\epsilon/2}\\
&\geq C C_{\omega}^{-1}\left(\frac{\alpha_{m}(e)}{\delta}\right)^{s_{B}-\epsilon}\sum_{e\in\Xi_{\omega}^{\epsilon}}\left(\frac{\underline\alpha^{-1} \alpha_{m}(e)}{\alpha_{M}(e)}\right)^{-\underline s(e,F_{\omega})+\epsilon/2}.
\end{align*}
Thus, for some $C^{*}_{\omega}>0$,
\begin{align*}
M_{\delta}(F_{\omega})&\geq C C_{\omega}^{-1}\delta^{-(s_{B}-\epsilon)}\underline\alpha^{\max_{z\in\{x,y\}}\underline s^{z}(F_{\omega})-\epsilon/2}\sum_{e\in\Xi_{\omega}^{\delta}}\alpha_{M}(e)^{\underline s(e,F_{\omega})}\alpha_{m}(e)^{s_{B}-\epsilon/2-\underline s(e,F_{\omega})}\\
&\geq C^{*}_{\omega}C_{\omega}^{-1}\delta^{-(s_{B}-\epsilon/2)}\sum_{e\in\Xi_{\omega}^{\delta}}\underline\psi_{\omega}^{s_{B}-\epsilon/2}
=C^{*}_\omega C_{\omega}^{-1}\delta^{-(s_{B}-\epsilon/2)}L^{\delta}_{\omega}(s_{B}-\epsilon/2).
\end{align*}
This in turn gives
\[
\frac{\log M_{\delta}(F_{\omega})}{-\log\delta}\geq\frac{\log C^{*}_\omega C_{\omega}}{-\log\delta}+(s_{B}-\epsilon/2)+\frac{\log L^{\delta}_{\omega}(s_{B}-\epsilon/2)}{-\log\delta}
\]
The lower bound follows almost surely because the first term becomes arbitrarily small, and by Lemma~\ref{complicatedLowerLemma}, the last term is positive for small enough $\delta$.
\end{proof}

\subsection{Proofs for Section~\ref{inftyVarRes}}

In this Section we prove the remaining results concerning $\infty$-variable box-like self-affine carpets.
We define a random variable $Y_{k}^{t}$ and show that it behaves similarly to a martingale. For $t>s_{B}$ we have $Y_{k}^{t}\to 0$ a.s.\ and $t$ is an almost sure lower bound for the box-counting dimension of $F_{\tau}$.
We define a new random variable $Z_{k}^{t}$ that, for $t<s_{B}$, increases exponentially a.s.. This will then allow us to establish  the lower bound.

\begin{lma}\label{ItsAMartingale}
Let $Y_{k}^{t}$ be the random variable given by
\[
\hspace{2cm}Y_{k}^{t}(\tau)=\sum_{\xi\in\mathbf{T}_{\tau}^{k}}\overline{\psi}_{\tau}^{t}(\xi).\hspace{2cm}(\tau\in\mathcal{T})
\]
For all $t\in\R^{+}_{0}$ and $l,k\in\N$ and some random variable $c_{k,l}$ that will be defined in~(\ref{approximateConstant}), the sequence of random variables satisfies
\begin{equation}\label{infinityQuasiMartingale}
\E\left(Y_{k+l}^{t}(\tau)\given[\Large] \mathcal{F}^{k}\right)=c_{k,l}(\tau) Y_{k}^{t}(\tau) \E(Y_{l}^{t}).
\end{equation}
Here $\mathcal{F}^{k}$ refers to the filtration corresponding to the `knowledge of outcomes up to the $k$-th level'.
Furthermore $0<c_{k,l}\leq 1$ for $t\in[0,s^{x}+s^{y}]$ and so the sequence $\{Y_{ql}^{t}\}_{q=1}^{\infty}$ forms a supermartingale if $\E(Y_{l}^{t})\leq1$.
\end{lma}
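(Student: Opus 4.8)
The plan is to condition on the top $k$ levels of the code tree and then use that, below a level-$k$ node, the tree is an independent copy of the whole construction, together with the (sub)multiplicativity of $\overline{\psi}$ from Lemma~\ref{lma:stochAdditivity}. First I would restrict to the full-measure event $\mathfrak{G}$ furnished by Lemma~\ref{lma:almostsureprojections}, on which $\overline{s}(e,F_{\tau})=\underline{s}(e,F_{\tau})$ equals the deterministic constant $s^{x}$ or $s^{y}$ (according to whether the long side of $f(e,\Delta)$ is horizontal or vertical) for every word $e$. The key consequence is that on $\mathfrak{G}$ the quantity $\overline{\psi}_{\tau}^{t}(e)$ no longer depends on the realisation beyond the geometry of $e$ itself; I write this common value as $\psi^{t}(e)$. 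Thus $Y_{k}^{t}$ is $\mathcal{F}^{k}$-measurable (it is assembled from the level-$k$ words of $\mathbf{T}_{\tau}$, which are fixed by the labels at levels $0,\dots,k-1$), and for any subtree the sum $\sum_{g}\psi^{t}(g)$ over its level-$l$ words is exactly $Y_{l}^{t}$ evaluated on that subtree.

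Next I would decompose and condition. Every word $\xi\in\mathbf{T}_{\tau}^{k}$ sits at a level-$k$ node $v$, and the level-$(k+l)$ words below it are precisely the $\xi\odot g$ with $g\in\mathbf{T}_{\sigma^{v}\tau}^{l}$, so that
\[
Y_{k+l}^{t}(\tau)=\sum_{\xi\in\mathbf{T}_{\tau}^{k}}\ \sum_{g\in\mathbf{T}_{\sigma^{v}\tau}^{l}}\psi^{t}(\xi\odot g).
\]
The branching rule makes the subtrees $\{\sigma^{v}\tau\}_{v}$ rooted at the level-$k$ nodes i.i.d.\ copies of $\tau$, all independent of $\mathcal{F}^{k}$. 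Setting $\rho(\xi,g)=\psi^{t}(\xi\odot g)/\bigl(\psi^{t}(\xi)\psi^{t}(g)\bigr)$ and taking conditional expectations gives
\[
\E\bigl(Y_{k+l}^{t}\mid\mathcal{F}^{k}\bigr)=\sum_{\xi\in\mathbf{T}_{\tau}^{k}}\overline{\psi}_{\tau}^{t}(\xi)\,\E\Bigl(\sum_{g\in\mathbf{T}_{\tau'}^{l}}\rho(\xi,g)\,\psi^{t}(g)\Bigr),
\]
where $\tau'$ denotes a fresh copy of the tree and $\overline{\psi}_{\tau}^{t}(\xi)=\psi^{t}(\xi)$ is $\mathcal{F}^{k}$-measurable. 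I would then \emph{define} $c_{k,l}(\tau)$ to be the ratio of this right-hand side to $Y_{k}^{t}(\tau)\,\E(Y_{l}^{t})$, so that (\ref{infinityQuasiMartingale}) holds by construction; here $c_{k,l}$ is $\mathcal{F}^{k}$-measurable and strictly positive, since every summand is positive and $\E(Y_{l}^{t})>0$ by non-extinction (Definition~\ref{defn:nonextinct}).

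It then remains to bound $c_{k,l}$ and read off the supermartingale property. For $t\in[0,s^{x}+s^{y}]$ the submultiplicativity proved in Lemma~\ref{lma:stochAdditivity} — whose argument concerns only the geometry of products of box-like maps and so carries over verbatim to $\infty$-variable words on $\mathfrak{G}$ — gives $\psi^{t}(\xi\odot g)\le\psi^{t}(\xi)\psi^{t}(g)$, that is $\rho(\xi,g)\le1$. Hence each inner expectation is at most $\E\bigl(\sum_{g}\psi^{t}(g)\bigr)=\E(Y_{l}^{t})$, whence $\E(Y_{k+l}^{t}\mid\mathcal{F}^{k})\le\E(Y_{l}^{t})\sum_{\xi}\overline{\psi}_{\tau}^{t}(\xi)=\E(Y_{l}^{t})\,Y_{k}^{t}$ and therefore $0<c_{k,l}\le1$. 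Finally, applying (\ref{infinityQuasiMartingale}) with $k=ql$ and combining $c_{ql,l}\le1$ with the hypothesis $\E(Y_{l}^{t})\le1$ yields $\E(Y_{(q+1)l}^{t}\mid\mathcal{F}^{ql})=c_{ql,l}\,Y_{ql}^{t}\,\E(Y_{l}^{t})\le Y_{ql}^{t}$; as $Y_{ql}^{t}\ge0$ is integrable (a finite sum of a.s.\ bounded terms), $\{Y_{ql}^{t}\}_{q\ge1}$ is a supermartingale with respect to $\{\mathcal{F}^{ql}\}_{q}$.

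The main obstacle is the probabilistic bookkeeping rather than the algebra. One must verify carefully that $\overline{\psi}_{\tau}^{t}(e)$ genuinely decouples from $\tau$ — this is precisely what the a.s.\ constancy of the projection dimensions in Lemma~\ref{lma:almostsureprojections} provides — and that the subtrees hanging below level $k$ really are i.i.d.\ copies of $\tau$ \emph{independent} of $\mathcal{F}^{k}$, so that the conditional expectation of a subtree's contribution is the \emph{unconditional} scalar $\E(Y_{l}^{t})$ and not a version conditioned on the level-$k$ labels. Pinning these two points down is exactly what allows the single factor $\E(Y_{l}^{t})$ (rather than some $\xi$-dependent quantity) to appear, and hence what makes the clean supermartingale inequality go through.
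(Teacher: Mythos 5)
Your proposal is correct and follows essentially the same route as the paper: decompose $Y_{k+l}^{t}$ over level-$k$ words and their independent level-$l$ continuations, isolate the multiplicative defect $\rho(\xi,g)$ (the paper's $C(e,g)$, which it computes explicitly via a short case analysis on whether $\alpha_{M}(e\odot g)$ equals $\alpha_{M}(e)\alpha_{M}(g)$, $\alpha_{M}(e)\alpha_{m}(g)$ or $\alpha_{m}(e)\alpha_{M}(g)$), define $c_{k,l}$ as the resulting ratio so that (\ref{infinityQuasiMartingale}) holds by construction, and bound the defect by $1$ for $t\in[0,s^{x}+s^{y}]$ to obtain the supermartingale property. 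Your explicit attention to the a.s.\ $\mathcal{F}^{k}$-measurability of $Y_{k}^{t}$ on the event where the projection dimensions are constant is a point the paper leaves implicit, and is welcome.
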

\begin{proof}
Let $e\in\mathbf{T}_{\tau}^{k}$ and $g\in\mathbf{T}_{\sigma^{e}\tau}^{l}$, then there exists  $C(e,g)$ such that
\[
\overline{\psi}_{\tau}^{t}(e\odot g)=C(e,g)\overline{\psi}_{\tau}^{t}(e)\overline{\psi}_{\sigma^{e}\tau}^{t}(g).
\]
We define the dual of the modified singular value function 
\[
\overline{\psi}_{\tau}^{*t}(e)=\alpha_{m}(e)^{\overline{s}(\neg e,F_{\tau})}\alpha_{M}(e)^{t-\overline{s}(\neg e,F_{\tau})},
\]where $s(\neg e,F_{\tau})$ is the box-counting dimension of the projection of $F_{\tau}$ onto the \emph{shorter} side of $f(e,\Delta)$. We find that $C(e,g)$ takes only a few possible values, depending on $e$ and $g$.
\begin{enumerate}
\item $\alpha_{M}(e\odot g)=\alpha_{M}(e)\alpha_{M}(g)$ and
\begin{enumerate}
\item the RIFS is of non-separated type.

\item the RIFS is of separated type and $\pi_{e}=\pi_{g}$.

\end{enumerate}
\item $\alpha_{M}(e\odot g)=\alpha_{M}(e)\alpha_{m}(g)$.

\item $\alpha_{M}(e\odot g)=\alpha_{m}(e)\alpha_{M}(g)$.
\end{enumerate}
Case (1a). Since the RIFS is of non-separated type, for almost all $\tau\in\mathcal{T}$, we have $s^{x}=s^{y}$. Furthermore, almost surely, 
\begin{align*}
\overline{\psi}_{\tau}^{t}(e\odot g)&=\alpha_{M}(e\odot g)^{s^{x}}\alpha_{m}(e\odot g)^{t-s^{x}}\\
&=(\alpha_{M}(e)\alpha_{M}(g))^{s^{x}}(\alpha_{m}(e)\alpha_{m}(g))^{t-s^{x}}\\
&=\overline{\psi}_{\tau}^{t}(e)\overline{\psi}_{e^\tau}^{t}(g),
\end{align*}
and so $C(e,g)=1$ a.s..\\
\\
Case (1b). As the RIFS is of separated type and the directions of the maximal modified singular value coincide we can apply ~(\ref{separatedRatioDecomposes}) and get $C(e,g)=1$, for all $\tau\in\mathcal{T}$.
\\\\
Case (2). We can write, for almost every $\tau\in\mathcal{T}$,
\begin{align*}
\overline{\psi}_{\tau}^{t}(e\odot g)&=\alpha_{M}(e\odot g)^{s(e\odot g)}\alpha_{m}(e\odot g)^{t-s(e\odot g)}\\
&=(\alpha_{M}(e)\alpha_{m}(g))^{s(e)}(\alpha_{m}(e)\alpha_{M}(g))^{t-s(e)}\\
&=\overline{\psi}_{\tau}^{t}(e)\alpha_{m}(g)^{s(\neg g)}\alpha_{M}(g)^{t-s(\neg g)}\\
&=\overline{\psi}_{\tau}^{t}(e)\overline{\psi}_{\sigma^{e}\tau}^{*t}(g),
\end{align*}
and so $C(e,g)=\overline{\psi}_{\sigma^{e}\tau}^{t}(g)/\overline{\psi}_{\sigma^{e}\tau}^{*t}(g)=(\alpha_{M}(g)/\alpha_{m}(g))^{t-s^{x}-s^{y}}$.
\\\\
Case (3). Similarly we can write, for almost every $\tau\in\mathcal{T}$,
\begin{align*}
\overline{\psi}_{\tau}^{t}(e\odot g)&=\alpha_{M}(e\odot g)^{s(e\odot g)}\alpha_{m}(e\odot g)^{t-s(e\odot g)}\\
&=(\alpha_{m}(e)\alpha_{M}(g))^{s(g)}(\alpha_{M}(e)\alpha_{m}(g))^{t-s(g)}
\\
&=\overline{\psi}_{\tau}^{t}(g)\alpha_{m}(e)^{s(\neg e)}\alpha_{M}(e)^{t-s(\neg e)}\\
&=\overline{\psi}_{\tau}^{*t}(e)\overline{\psi}_{\sigma^{e}\tau}^{t}(g),
\end{align*}
and so $C(e,g)=\overline{\psi}_{\tau}^{t}(e)/\overline{\psi}_{\tau}^{*t}(e)=(\alpha_{M}(e)/\alpha_{m}(e))^{t-s^{x}-s^{y}}$.

Therefore
\begin{align}
\E\left(Y_{k+l}^{t}(\tau)\given[\Large] \mathcal{F}^{k}\right)
&=\E\left(\sum_{e\in\mathbf{T}_{\tau}^{k}}\sum_{g\in\mathbf{T}_{\sigma^{e}\tau}^{l}}\overline{\psi}_{\tau}^{t}(e\odot g) \given[\Big] \mathcal{F}^{k}\right)\nonumber\\
&=\E\left(\sum_{e\in\mathbf{T}_{\tau}^{k}}\sum_{g\in\mathbf{T}_{e^\tau}^{l}}C(e,g)\overline{\psi}_{\tau}^{t}(e)\overline{\psi}_{e^\tau}^{t}(g) \given[\Big] \mathcal{F}^{k}\right)\label{MartingaleExpansionLine}\\
&=c_{k,l}(\tau)Y_{k}^{t}(\tau)\E(Y_{l}^{t}),\label{lastMartingaleLine}
\end{align}
where 
\begin{equation}\label{approximateConstant}
c_{k,l}(\tau)=\frac{\E\left(\sum_{e\in\mathbf{T}_{\tau}^{k}}\sum_{g\in\mathbf{T}_{e^\tau}^{l}}C(e,g)\overline{\psi}_{\tau}^{t}(e)\overline{\psi}_{e^\tau}^{t}(g)\given[\Big] \mathcal{F}^{k}\right)}{Y_{k}^{t}(\tau)\E(Y_{l}^{t})}.
\end{equation}
We will analyse $c_{k,l}(\tau)$ in Lemma~\ref{martingaleFactorConvergesTo1} and here only comment that by inspection we deduce $C(e,g)\leq 1$ for $t\in[0,s^{x}+s^{y}]$ and so (\ref{lastMartingaleLine}) becomes
\(
\E\left(Y_{k+l}^{t}(\tau)\given[\Large] \mathcal{F}^{k}\right)\leq Y_{k}^{t}(\tau)\E(Y_{l}^{t})
\)
and the sequence of random variables $\{Y_{ql}^{t}\}_{q=1}^{\infty}$ forms a supermartingale if $\E(Y_{l}^{t})\leq 1$.
\end{proof}


\begin{lma}\label{upperBoundProof}
Let $s\in[0,s^{x}+s^{y}]$ and $s>s_{B}$. Then the sequence $\{Y_{k}^{s}\}$ converges to $0$ exponentially fast a.s.\ and $s_{B}$ is an almost sure upper bound for the box-counting dimension of $F_{\tau}$, so
\[
\dim_{B}F_{\tau}\leq s_{B}.
\]
\end{lma}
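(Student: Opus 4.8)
The plan is to prove the two assertions in turn: first that the level sums $Y_k^s$ tend to $0$ exponentially fast almost surely, and then to convert this into the box-counting estimate by a covering argument parallel to Lemma~\ref{upperBoundLemma}. The engine for the first part is the quasi-multiplicative identity (\ref{infinityQuasiMartingale}) of Lemma~\ref{ItsAMartingale}. Taking unconditional expectations there and using $0<c_{k,l}\le1$ for $s\in[0,s^x+s^y]$ yields the submultiplicativity
\[
\E\left(Y_{k+l}^{s}\right)\le \E\left(Y_{k}^{s}\right)\E\left(Y_{l}^{s}\right),
\]
so that $\E(Y_k^s)^{1/k}$ converges to $Q(s):=\inf_k\E(Y_k^s)^{1/k}$ by Fekete's lemma.

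First I would show $Q(s)<1$. Writing $\overline\psi_\tau^{s+\epsilon}(e)=\alpha_m(e)^{\epsilon}\overline\psi_\tau^{s}(e)$ with $\underline\alpha\le\alpha_m(e)\le\overline\alpha<1$, the same reasoning as in Lemma~\ref{continuityLemma} gives $\underline\alpha^{\epsilon}\le Q(s+\epsilon)/Q(s)\le\overline\alpha^{\epsilon}$, so $Q$ is continuous and strictly decreasing; together with the characterisation of $s_B$ this forces $Q(s)<1$ for $s>s_B$. Choosing $\gamma$ with $Q(s)<\gamma<1$, Fekete's lemma gives $\E(Y_k^s)\le\gamma^{k}$ for all large $k$. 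Then for any $\gamma<\gamma'<1$, Markov's inequality yields $\Prob(Y_k^s>(\gamma')^{k})\le(\gamma/\gamma')^{k}$, which is summable, so Borel--Cantelli gives $Y_k^s\le(\gamma')^{k}$ for all large $k$ almost surely. Hence $Y_k^s\to0$ exponentially fast and, crucially, $\sum_{k\ge1}Y_k^s<\infty$ almost surely.

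For the dimension bound I would repeat the covering argument of Lemma~\ref{upperBoundLemma} with the tree $\delta$-stopping $\Xi_\tau^\delta$ in place of $\Xi_\omega^\delta$, covering $F_\tau$ by minimal $\delta$-covers of the pieces $f(e,\pi_e F_{\sigma^{e}\tau})$ for $e\in\Xi_\tau^\delta$ and estimating $N_r(\pi_e F_{\sigma^{e}\tau})$ via the almost sure projection dimensions of Lemma~\ref{lma:almostsureprojections}. Using (\ref{alphaboundsEq}) to trade powers of $\delta$ for powers of $\alpha_m(e)$ exactly as before produces, for a.e.\ $\tau$ and any $\epsilon>0$,
\[
N_\delta(F_\tau)\le C_\tau\,\delta^{-(s+\epsilon)}\sum_{e\in\Xi_\tau^\delta}\overline\psi_\tau^{s}(e).
\]
The new point is that every $e\in\Xi_\tau^\delta$ of length $k$ is a node of $\mathbf{T}_\tau^{k}$ and $\overline\psi^s\ge0$, so the stopping sum is dominated level-by-level by the $Y_k^s$, giving the $\delta$-uniform bound $\sum_{e\in\Xi_\tau^\delta}\overline\psi_\tau^s(e)\le\sum_{k\ge1}Y_k^s<\infty$. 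Therefore $N_\delta(F_\tau)\le C'_\tau\,\delta^{-(s+\epsilon)}$ and $\overline\dim_B F_\tau\le s+\epsilon$ almost surely. Letting $\epsilon\downarrow0$ and then $s\downarrow s_B$ through rational values (a countable intersection of full-measure events) gives $\overline\dim_B F_\tau\le s_B$ almost surely.

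The hard part will be the identification of the critical exponent. The exponential decay above is driven by the annealed rate $Q(s)=\lim_k\E(Y_k^s)^{1/k}$, whereas $s_B$ is characterised in (\ref{subadditiveExpecationEqui}) through $\E((Y_k^{s_B})^{1/k})\to1$, and Jensen's inequality only gives $\lim_k\E((Y_k^s)^{1/k})\le Q(s)$ a priori. One must therefore reconcile these thresholds, i.e.\ argue that at $s_B$ the supermartingale $Y_k^{s_B}$ converges without exponential decay while for $s>s_B$ one genuinely has $\E(Y_l^s)<1$ for some $l$; this is precisely where the strict monotonicity of $Q$ and the neutrality of $Y_k^{s_B}$ must be combined. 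A secondary, more technical, nuisance is securing a single constant $C_\tau$ controlling $N_r(\pi_e F_{\sigma^{e}\tau})$ simultaneously over the infinitely many subtrees $\sigma^{e}\tau$; this uniformity has to be extracted from the almost sure projection results of Troscheit~\cite{Troscheit15} rather than applied subtree by subtree.
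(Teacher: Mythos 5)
Your proposal follows the same route as the paper's proof: Lemma~\ref{ItsAMartingale} supplies the (sub)multiplicative structure of the level sums, for $s>s_B$ one extracts $\E(Y_l^s)<1$ for some $l$ and hence exponential decay of $Y_k^s$ almost surely, and the covering argument of Lemma~\ref{upperBoundLemma} run over the tree stopping $\Xi_\tau^\delta$ converts summability of the stopping sums into $\overline\dim_B F_\tau\le s$. Two of your refinements are actually tighter than the paper's version: the Markov/Borel--Cantelli step upgrading $\E(Y_k^s)\le\gamma^k$ to a pathwise bound (the paper only invokes supermartingale convergence of $\{Y_{ql}^s\}$ and asserts the exponential rate), and dominating the stopping sum by $\sum_k Y_k^s<\infty$ directly rather than by the paper's $\sum_k\overline\alpha^{k\delta/2}Y_k^{s_B}$, whose convergence is not spelled out there. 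Your secondary worry about a single constant controlling $N_r(\pi_e F_{\sigma^e\tau})$ over all subtrees is also legitimate and is passed over in silence in the paper.

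The step you flag as ``the hard part'' is precisely the step the paper dispatches with ``by definition, see (\ref{subadditiveExpecationEqui})'': that $s>s_B$ yields some $l$ with $\E(Y_l^s)<1$. You are right that, read literally, (\ref{subadditiveExpecationEqui}) places the $1/k$-th power inside the expectation, and Jensen gives $\E\bigl((Y_l^s)^{1/l}\bigr)\le\bigl(\E Y_l^s\bigr)^{1/l}$, which is the wrong direction: strict monotonicity and continuity of your annealed rate $Q$ do not by themselves force $Q(s)<1$ for $s>s_B$ unless one also knows $Q(s_B)\le1$. The intended reading is that the threshold is the annealed one, $\inf_l(\E Y_l^s)^{1/l}=1$ at $s=s_B$ --- this is what the additive corollary (\ref{additiveinfty}) states and what Lemmas~\ref{ItsAMartingale} and~\ref{lowerBoundProof} actually use --- and under that reading your Fekete argument closes the gap immediately. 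If one insists on the literal quenched formulation, the identification of the two rates has to be extracted from the branching structure (the Kesten--Stigum/Athreya--Vidyashankar-type input already cited in Lemma~\ref{martingaleFactorConvergesTo1}), which neither you nor the paper carries out. So at this one point your proposal is no less complete than the published proof, and it is more careful everywhere else.
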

\begin{proof}
If $s>s_{B}$ there exists $l$ such that $\E(Y_{l}^{s})<1$ by definition, see (\ref{subadditiveExpecationEqui}). Therefore, by Lemma~\ref{ItsAMartingale}, $\{Y_{ql}^{s}\}_{q=1}^{\infty}$ is a strict supermartingale. Hence $\E(Y_{ql}^{s})\to0$ as $q\to\infty$ and since $Y_{k}^{s}\asymp Y_{\lfloor k/q\rfloor q}^{s}$, we get $Y_{k}^{s}\to0$ as $k\to\infty$, almost surely. This happens at an exponential rate. That is there exists $\gamma<1$ and $D_{\tau}>0$ such that $Y_{k}\leq D_{\tau}\gamma^{k}$.

We now define a stopping set $\Xi_{\tau}^{\delta}$ analogously to before 
\[
\Xi_{\tau}^{\delta}=\{e\in\mathbf{T}_{\tau}^i \mid i\in \N, \alpha_m(e)\leq\delta \text{ and }\alpha_m(e_{1}e_{2}\dots e_{\lvert e\rvert-1})>\delta\},
\]
and we can modify the argument in Lemma~\ref{upperBoundLemma} accordingly to get, for $s=s_{B}+\delta$,
\[
N_{\epsilon}(F_{\tau})\leq C_{\tau}C^{*}\epsilon^{-(s_{B}+\delta)}\sum_{k=1}^{\infty}\sum_{e\in\mathbf{T}_{\tau}^{k}}\overline\alpha^{k\delta/2}\overline{\psi}^{s_{B}}_{\tau}(e)\leq C_{\tau}C^{*}\epsilon^{-(s_{B}+\delta)}D_{\tau}\sum_{k=1}^{\infty}\overline\alpha^{k\delta/2}\gamma^{k}<\infty,
\]
almost surely. We conclude that $s_{B}$ is an almost sure upper bound to the box-counting dimension of $F_{\tau}$.
\end{proof}

\begin{lma}\label{martingaleFactorConvergesTo1}
For $c_{k,l}(\tau)$ as in (\ref{infinityQuasiMartingale}) we find $s\in[0,s^{x}+s^{y}]$ implies $c_{k,l}\nearrow 1$ as $k\to\infty$ for almost all $\tau$. If however $s>s^{x}+s^{y}$ we get $c_{k,l}\searrow 1$ as $k\to\infty$ almost surely.
\end{lma}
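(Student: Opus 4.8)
The plan is to view $c_{k,l}(\tau)$ as a $\overline{\psi}^{t}$-weighted average of local correction factors and to squeeze it to $1$ by combining boundedness, monotonicity in $k$, and the ergodic behaviour of the rectangle eccentricities $\alpha_{M}/\alpha_{m}$. First I would unwind (\ref{approximateConstant}). Each $\overline{\psi}_{\tau}^{t}(e)$ with $e\in\mathbf{T}_{\tau}^{k}$ is $\mathcal{F}^{k}$-measurable, and conditioned on $\mathcal{F}^{k}$ the subtrees $\sigma^{e}\tau$ are independent copies of the whole tree, so the inner conditional expectation depends on $e$ only through the geometry of $f(e,\Delta)$. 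Writing $\overline{\psi}_{\tau}^{t}(e\odot g)=C(e,g)\overline{\psi}_{\tau}^{t}(e)\overline{\psi}_{\sigma^{e}\tau}^{t}(g)$ as in Lemma~\ref{ItsAMartingale}, this yields
\[
c_{k,l}(\tau)=\frac{1}{\E(Y_{l}^{t})}\sum_{e\in\mathbf{T}_{\tau}^{k}}\frac{\overline{\psi}_{\tau}^{t}(e)}{Y_{k}^{t}(\tau)}\,r_{t}(e),\qquad r_{t}(e)=\frac{1}{\overline{\psi}_{\tau}^{t}(e)}\,\E\!\left(\sum_{g\in\mathbf{T}_{\sigma^{e}\tau}^{l}}\overline{\psi}_{\tau}^{t}(e\odot g)\,\bigg|\,\mathcal{F}^{k}\right),
\]
so that $c_{k,l}$ is the $\overline{\psi}^{t}$-weighted mean of $r_{t}(e)/\E(Y_{l}^{t})$ over the level-$k$ words.

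Next I would pin down $r_{t}(e)$ using the three cases for $C(e,g)$ established in the proof of Lemma~\ref{ItsAMartingale}. There $C(e,g)=1$ whenever $e$ and $g$ share the dominant coordinate direction (cases (1a),(1b), cf.\ (\ref{separatedRatioDecomposes})), and otherwise $C(e,g)=(\alpha_{M}(\cdot)/\alpha_{m}(\cdot))^{t-s^{x}-s^{y}}$ is a power of an eccentricity ratio whose base is the \emph{less} eccentric of $e,g$. Since $g$ has the fixed length $l$, its eccentricity is at most $\rho_{\max}^{l}$, where $\rho_{\max}=\max_{i,j}\alpha_{M}(e_{i}^{j})/\alpha_{m}(e_{i}^{j})$, whence $C(e,g)\in[\rho_{\max}^{-l(s^{x}+s^{y}-t)},1]$ for $t\le s^{x}+s^{y}$ and lies in the reciprocal interval for $t>s^{x}+s^{y}$. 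Moreover, in the mismatch cases the product $C(e,g)\overline{\psi}_{\sigma^{e}\tau}^{t}(g)$ re-expresses, through the dual singular value function $\overline{\psi}^{*t}$ of Lemma~\ref{ItsAMartingale}, as the \emph{reoriented} value of $g$. Combining these observations gives $\kappa_{l}\le r_{t}(e)/\E(Y_{l}^{t})\le 1$ for $t\le s^{x}+s^{y}$ (and the reverse for $t>s^{x}+s^{y}$), with the value equal to $1$ when $e$ is balanced ($\alpha_{M}(e)=\alpha_{m}(e)$) and approaching the extreme bounded value only once the eccentricity of $e$ exceeds $\rho_{\max}^{l}$, so that no length-$l$ continuation can reverse the dominant direction of $e\odot g$. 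In particular $c_{k,l}\in[\kappa_{l},1]$, which already recovers the sign of $c_{k,l}-1$ used in Lemma~\ref{ItsAMartingale}.

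For the monotonicity I would track the dependence of the correction on $e$ through its signed log-eccentricity $\lambda(e)=\log\alpha_{M}(e)-\log\alpha_{m}(e)$ together with its dominant direction: $r_{t}(e)/\E(Y_{l}^{t})$ equals $1$ at $\lambda(e)=0$ and moves monotonically toward its extreme (bounded) value as $|\lambda(e)|$ grows. The asserted monotonicity of $c_{k,l}$ in $k$ would then be read off by comparing the $\overline{\psi}^{t}$-weighted laws of $\lambda(e)$ at levels $k$ and $k+1$, that is, from how appending one further generation reshapes this law; boundedness together with such monotonicity then yields convergence of $c_{k,l}(\tau)$ to a limit $c_{\infty,l}$ for almost every $\tau$.

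The main obstacle is the identification $c_{\infty,l}=1$. Because the correction factor equals $1$ only at balanced words and is strictly off $1$ once $e$ is eccentric, this forces one to show that the $\overline{\psi}^{t}$-weighted law of $\lambda(e)$ effectively concentrates, in the relevant averaged sense, so that the induced mean of the correction tends to $1$ uniformly across the whole range $t\in[0,s^{x}+s^{y}]$ (and symmetrically beyond it). The delicate point is that this balance is not formal: the reoriented (dual) sums produced by the mismatch cases must be reconciled asymptotically with $\E(Y_{l}^{t})$, and this reconciliation relies crucially on $s^{x}$ and $s^{y}$ being the \emph{genuine} almost sure projection dimensions (Lemma~\ref{lma:almostsureprojections}), rather than arbitrary exponents. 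I expect the heart of the argument to be a Furstenberg-type control of the eccentricity of the products of the diagonal and anti-diagonal blocks $\mathbf{Q}_{i}^{j}$, fed into the subadditive ergodic machinery already used for the $s$-pressure in Lemma~\ref{lma:spressure}, with the separated versus non-separated dichotomy ($s^{x}\neq s^{y}$ versus $s^{x}=s^{y}$) entering precisely at this step.
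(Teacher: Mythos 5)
Your structural reading of $c_{k,l}$ is faithful to the paper: rewriting (\ref{approximateConstant}) as a $\overline{\psi}^{t}$-weighted average of local factors $r_{t}(e)/\E(Y_{l}^{t})$, the three-case analysis of $C(e,g)$ inherited from Lemma~\ref{ItsAMartingale}, the uniform bounds on $C(e,g)$ coming from the fixed length $l$ of $g$, and the resulting sign of $c_{k,l}-1$ all correspond to the decomposition (\ref{eq:theMonster}) used in the paper's proof. The monotonicity in $k$ is only asserted in your sketch ("would then be read off by comparing the weighted laws"), but the paper's proof is equally silent on that point, so this is not where you lose ground.

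The genuine gap is in your mechanism for $c_{\infty,l}=1$. Having correctly observed that the local correction equals $1$ exactly at balanced words and settles at a value bounded away from $1$ once the eccentricity of $e$ exceeds $\rho_{\max}^{l}$, you propose to conclude by showing that the $\overline{\psi}^{t}$-weighted law of $\lambda(e)$ concentrates so that the induced mean of the correction tends to $1$; this would require concentration near $\lambda=0$. But for a strictly self-affine RIFS the eccentricity of a typical level-$k$ word diverges almost surely ($\lambda(e)$ evolves as a branching random walk with non-degenerate increments), so the weighted law of $\lambda(e)$ escapes to infinity rather than concentrating. The paper uses exactly this fact in the \emph{opposite} direction: it argues that the words with $\alpha_{M}(e)/\alpha_{m}(e)\leq c$ --- the only ones for which a length-$l$ continuation can reverse the dominant direction and hence feed the defective sums in (\ref{eq:theMonster}) --- form an asymptotically negligible proportion, invoking CLT/LDP estimates for Galton--Watson processes to control them. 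Taken literally, your route fails; and if your own description of the correction for eccentric $e$ is taken at face value, the weighted average would converge to something strictly below $1$ whenever cross-oriented non-square cylinders occur with positive probability, so the reconciliation of the reoriented (dual) sums with $\E(Y_{l}^{t})$ that you defer to the last sentence is not a technical footnote but the entire content of the lemma. The closing appeal to Furstenberg-type control is also misplaced: the linear parts are diagonal or anti-diagonal, so the eccentricity is a scalar additive functional and the relevant tool is the branching-process limit theory the paper cites, not products of random matrices.
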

\begin{proof}
We first decompose (\ref{MartingaleExpansionLine}) into
\begin{multline}\label{eq:theMonster}
\E\left( Y_{k+l}^{t}(\tau)\given \mathcal{F}^{k}\right)
=\E\left(\sum_{\substack{e\in\mathbf{T}_{\tau}^{k},g\in\mathbf{T}_{\sigma^{e}\tau}^{l}\\
\alpha_{M}(e\odot g)=\alpha_{M}(e)\alpha_{M}(g)}}\hspace{-30pt}\psi_{\tau}^{t}(e)\overline{\psi}_{\sigma^{e}\tau}^{t}(g)
\hspace{7pt}\right.\\
\left.+\hspace{-17pt}\sum_{\substack{e\in\mathbf{T}_{\tau}^{k},g\in\mathbf{T}_{\sigma^{e}\tau}^{l}\\
\alpha_{M}(e\odot g)=\alpha_{M}(e)\alpha_{m}(g)}}\hspace{-30pt}\overline{\psi}_{\tau}^{t}(e)\overline{\psi}_{\sigma^{e}\tau}^{*t}(g)
\hspace{7pt}+\hspace{-17pt}\sum_{\substack{e\in\mathbf{T}_{\tau}^{k},g\in\mathbf{T}_{\sigma^{e}\tau}^{l}\\
\alpha_{M}(e\odot g)=\alpha_{m}(e)\alpha_{M}(g)}}\hspace{-30pt}\overline{\psi}_{\tau}^{*t}(e)\overline{\psi}_{\sigma^{e}\tau}^{t}(g)\given[\Big]\mathcal{F}^{k}\right).
\end{multline}
Without loss of generality we can assume that the RIFS is strictly self-affine, that is there exists at least one $f_{i}^{j}$ such that $\alpha_{M}(e_{i}^{j})>\alpha_{m}(e_{i}^{j})$, since otherwise we trivially have $C(e,g)=1$ and so $c_{k,l}(\tau)=1$. We recall that $l$ is fixed and thus there exists a maximal ratio $\max_{\kappa\in\mathcal{T}}\max_{g\in\mathbf{T}_{\kappa}^{l}}\alpha_{M}(g)/\alpha_{m}(g)$. Now consider a word $e\in\mathbf{T}_{\tau}^{k}$ for large $k$ and consider the case of $\alpha_{M}(e\odot g)=\alpha_{m}(e)\alpha_{M}(g)$. We must, by the bounded length of $g$, have $\alpha_{M}(e)/\alpha_{m}(e)\sim 1$ surely for some uniform constant.
Analogously, when $\alpha_{M}(e\odot g)=\alpha_{M}(e)\alpha_{m}(g)$ we obtain $\alpha_{M}(e)/\alpha_{m}(e)\sim1$, uniformly, and hence $\overline{\psi}_{\tau}^{*t}(e)\overline{\psi}_{\sigma^{e}\tau}^{t}(g)\sim \overline{\psi}_{\tau}^t (e)\sim \overline{\psi}_{\tau}^{t}(e)\overline{\psi}_{\sigma^{e}\tau}^{t}(g)$.

Finally we note that the last two sums in (\ref{eq:theMonster}) correspond to rare events. 
More formally, the proportion of summands where $\alpha_M(e)/\alpha_m(e)\leq c<\infty$ for some uniform $c\in\R$, is almost surely insignificant. 
This follows directly from the assumption that the RIFS is not strictly self-affine, the independence of choices for every node and the uniform bounds for fixed $l$. Applying any of the various Central Limit Theorems and Large Deviation Principles for martingales and Galton-Watson processes, see e.g.\ Athreya and Vidyashankar~\cite{Athreya97} or Williams~\cite{WilliamsProbability}, we find that, as $k$ increases, the dominant terms are where $\alpha_{M}(e\odot g)=\alpha_{M}(e)\alpha_{M}(g)$ and $\alpha_{M}(e\odot g)=\alpha_{M}(e)\alpha_{m}(g)$ for $\alpha_M(e)/\alpha_m(e)\gg c$. In other words, the defect measured by $c_{k,l}(\tau)$ tends to $1$ almost surely. The direction of convergence can be deduced from the values of $C(e,g)$, as they depend on the value of $t$.
\end{proof}

We remark that the result in Lemma~\ref{martingaleFactorConvergesTo1} shows that the sequence $Y_{k}/\E(Y_{1})^{k}$ looks like a martingale `in the limit'. In the additive case, $c_{k,l}=1$ (surely) and one can show that $Y_{k}/\E(Y_{1})^{k}$ is a $\mathcal{L}^{2}$-bounded martingale, but to work in greater generality we will not employ this fact here and prove the lower bound by a branching argument.

\begin{lma}\label{lowerBoundProof}
For $s<s_{B}$, the sequence of random variables $\{Y_{k}^{s}\}$ diverges to $+\infty$ almost surely and hence the box-counting dimension of $F_{\tau}$ is bounded below by s. We conclude
\[
\dim_{B}F_{\tau}\geq s_{B},
\]
almost surely.
\end{lma}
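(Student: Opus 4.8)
The plan is to first prove that $\{Y_k^s\}$ diverges to $+\infty$ almost surely, conditioned on non-extinction, and then to deduce the box-counting bound by repeating the stopping-set argument of Lemma~\ref{lma:mainLower}. Fix $s<s_B$. Since $\overline\psi_\tau^s(e)=\alpha_M(e)^{\overline s(e,F_\tau)}\alpha_m(e)^{s-\overline s(e,F_\tau)}$ is decreasing in $s$ (the exponent of $\alpha_m(e)<1$ increases), the limit $g(s):=\lim_{k}\E((Y_k^s)^{1/k})$ is strictly decreasing with $g(s_B)=1$ by \eqref{subadditiveExpecationEqui}, so $g(s)>1$. Applying Jensen's inequality to the concave map $x\mapsto x^{1/l}$ gives $(\E Y_l^s)^{1/l}\ge\E((Y_l^s)^{1/l})$, whose right-hand side exceeds $1$ for large $l$; hence I may fix $l\in\N$ with $\beta:=\E(Y_l^s)>1$. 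Note also $s<s_B\le s^x+s^y$, so we are in the regime of Lemmas~\ref{ItsAMartingale} and~\ref{martingaleFactorConvergesTo1} where $c_{k,l}(\tau)\le1$ and $c_{ql,l}(\tau)\nearrow1$ almost surely.

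The heart of the argument is to promote the growth of the mean $\E(Y_{ql}^s)\sim\beta^q$ to almost sure growth of $Y_{ql}^s$. The self-similar structure is essential: conditioned on $\mathcal{F}^{l}$, the subtrees $\sigma^e\tau$ rooted at the level-$l$ cylinders $e\in\mathbf{T}_\tau^l$ are independent copies of $\tau$, so $Y_{(q+1)l}^s$ obeys a branching recursion whose conditional mean factor $c_{ql,l}(\tau)\,\beta$ converges to $\beta>1$. Because $\overline\psi^s$ is only asymptotically multiplicative, I would introduce the auxiliary variable $Z_k^s$ obtained by discarding the rare summands for which $\alpha_M(e)/\alpha_m(e)$ stays bounded (the terms responsible for the defect $c_{k,l}<1$ isolated in the proof of Lemma~\ref{martingaleFactorConvergesTo1}); on the dominant part the recursion is genuinely multiplicative and $Z_{ql}^s$ is a supercritical weighted branching (Galton--Watson) process. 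Establishing a second-moment bound $\E((Z_{ql}^s)^2)\le D(\E Z_{ql}^s)^2$ and applying the Paley--Zygmund inequality then yields $\Prob\!\big(\liminf_q Z_{ql}^s/\E(Z_{ql}^s)>0\big)>0$; since consecutive tree levels alter $Y_k^s$ by a uniformly bounded factor, this forces $Y_k^s\to\infty$ along the full sequence with positive probability.

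Next I would upgrade positive probability to almost sure divergence. By the non-extinction hypothesis the tree almost surely contains infinitely many disjoint subtrees each distributed as $\tau$, so $\{Y_k^s\to\infty\}$ is, after the appropriate renewal, a tail event of an i.i.d.\ family of subtrees; a standard $0$--$1$ law for Galton--Watson processes together with the Kesten--Stigum-type control of the $L^2$-bounded limit (see Athreya and Vidyashankar~\cite{Athreya97} and Williams~\cite{WilliamsProbability}) then gives $\Prob(Y_k^s\to\infty\mid\text{non-extinction})=1$. The main obstacle is exactly this passage: turning supercritical growth of the mean into almost sure exponential growth of a branching process whose increments are only asymptotically multiplicative, which is why the truncation $Z_k^s$ and the second-moment estimate are needed in place of the plain $L^2$-martingale argument available in the additive case.

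Finally I would translate this into the box-counting lower bound as in Lemma~\ref{lma:mainLower}. Writing $L_\tau^\delta(t)=\sum_{e\in\Xi_\tau^\delta}\overline\psi_\tau^t(e)$ for the $\delta$-stopping $\Xi_\tau^\delta$ of Lemma~\ref{upperBoundProof}, the UORC makes the open rectangles $\{f_e(\mathring\Delta)\}_{e\in\Xi_\tau^\delta}$ pairwise disjoint with shortest side at least $\underline\alpha\delta$, so each square of the $\delta$-mesh meets at most $(\underline\alpha^{-1}+2)^2$ of them and
\[
M_\delta(F_\tau)\ge C\sum_{e\in\Xi_\tau^\delta}N_\delta(F_\tau(e))\ge C\,C_\tau^{-1}\,\delta^{-(s-\epsilon)}\,L_\tau^\delta(s-\epsilon/2).
\]
The bounded-distortion estimate \eqref{boundedSingular} shows $L_\tau^\delta(s-\epsilon/2)$ is comparable to $Y_{k(\delta)}^{s-\epsilon/2}$ for $k(\delta)\asymp\log(1/\delta)$, which diverges by the first part; hence $\log M_\delta(F_\tau)/(-\log\delta)\ge s-\epsilon$ almost surely, and letting $\epsilon\to0$ and $s\nearrow s_B$ gives $\dim_B F_\tau\ge s_B$.
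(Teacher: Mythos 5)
Your overall architecture matches the paper's: obtain exponential growth of the mean, upgrade to almost sure exponential growth by exploiting the independence of subtrees, and feed the result into the stopping-set covering argument of Lemma~\ref{lma:mainLower}. The preliminary Jensen step giving $\E(Y_l^s)>1$ is fine, and you correctly identify where the difficulty lies. However, two steps have genuine gaps. The first is the Paley--Zygmund step: you assert a second-moment bound $\E((Z_{ql}^s)^2)\le D(\E Z_{ql}^s)^2$ for your truncated process without proving it, and this is exactly the estimate the paper deliberately avoids --- the remark after Lemma~\ref{martingaleFactorConvergesTo1} notes that the $\mathcal{L}^2$-martingale argument is available only in the additive case, and the proof is structured as a branching argument precisely so that no second moments are needed. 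Your truncation (discarding summands with $\alpha_M(e)/\alpha_m(e)$ bounded) is plausible, but you would still have to show that what remains is exactly multiplicative, still supercritical, and has controlled variance; none of that is routine here. The paper instead observes that, conditioned on non-extinction, every surviving branch a.s.\ splits into at least two i.i.d.\ subtrees, so the probability $x$ of the growth event satisfies $x\ge 1-(1-x)^2$ and hence $x\in\{0,1\}$; first-moment information together with the bounded one-step ratio $\underline\alpha Z_n^s\le Z_{n+1}^s\le\overline\alpha Z_n^s$ then rules out $x=0$.

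The second gap is more concrete: the transfer from $Y_k^s\to\infty$ to a lower bound on $L_\tau^\delta(s-\epsilon/2)$ does not work as you state it. Words in $\Xi_\tau^\delta$ have lengths spread over an interval of width proportional to $\log(1/\delta)$, and the multiplicative defect accumulated when comparing a stopping-set sum to a single fixed-level sum grows exponentially in $k$; the single-step distortion bound (\ref{boundedSingular}) does not give $L_\tau^\delta(t)\asymp Y_{k(\delta)}^t$ with uniform constants. This is precisely why the $1$-variable analogue required the full corridor and matrix machinery of Lemma~\ref{complicatedLowerLemma} rather than a one-line comparison. The paper's proof sidesteps the issue entirely by defining $Z_n^s$ as the sum over the stopping set $\Xi_\tau^{\xi^n}$ itself, verifying that this sequence satisfies the approximate martingale recursion (\ref{anotherBoundBitesTheDust}), and running the branching argument on $Z_n^s$ directly, so that no transfer between level sums and stopping sums is ever needed. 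To repair your argument you should either run the branching argument on the stopping-set sums, as the paper does, or supply an $\infty$-variable analogue of Lemma~\ref{complicatedLowerLemma}.
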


\begin{proof}
Using the definition for the $\infty$-variable stopping set $\Xi_{\tau}^{\delta}$ and using an argument identical to Lemma~\ref{lma:mainLower} we can write for $s=s_{B}-\epsilon$ and some $C^{*}>0$ and some almost surely positive $C_{\tau}$, 
\begin{equation}\label{lowerBoundWriting}
M_{\delta}(F_{\tau})\geq C^{*}C_{\tau}\delta^{-(s_{B}-\epsilon/2)}\sum_{e\in\Xi_{\tau}^{\delta}}\overline{\psi}_{\tau}^{s_{B}-\epsilon/2}(e).
\end{equation}
But one can easily see that for any $\xi>0$, the random variable \[Z^{s}_{n}=\sum_{e\in\Xi_{\tau}^{\xi^{n}}}\overline{\psi}_{\tau}^{s_{B}-\epsilon/2}(e)\] is also an approximate martingale, c.f. (\ref{infinityQuasiMartingale}), for some analogous random variable $c_{k,l}$ with properties as in Lemma~\ref{martingaleFactorConvergesTo1},
\begin{equation}\label{anotherBoundBitesTheDust}
\E(Z^{s}_{k+l}\mid \mathcal{F}^{k})=c_{k,l}Z_{k}^{s}\E(Z_{l}^{s}).
\end{equation}
Now, for $s<s_{B}$ let $k$ be large enough such that 
\[
\E\left(\sum_{e\in\mathbf{T}_{\tau}^{k}}\overline{\psi}_{\tau}^{s}(e)\right)>1,
\]
choose $\xi>0$ such that $\xi<\overline\alpha^{k}$. Then
\begin{equation}\nonumber
\E(Z_{l}^{s})\geq \E\left(\sum_{e\in\mathbf{T}_{\tau}^{k}}\overline{\psi}_{\tau}^{s}(e)\right)>1,
\end{equation}
for all $l$. 
Now, using Lemma~\ref{martingaleFactorConvergesTo1}, we choose $l_{\sup}$ large enough such that $\E(c_{k,l_{\sup}})\E(Z_{l_{\sup}}^{s})>1$ for all $k\geq l_{\sup}$.
Using~(\ref{anotherBoundBitesTheDust}), we conclude that $\E(Z^{s}_{ql_{\sup}})$ increases exponentially as $q$ grows.
Similarly, since \[Z^{s}_{l_{\sup}\lfloor k/l_{\sup}\rfloor}\asymp Z^{s}_{k},\] there exists some $\beta_{1},\beta_{2}>1$ 
and a constant $D$ such that $D^{-1}\beta_{1}^{k}\leq\E (Z^{s}_{k})\leq D\beta_{2}^{k}$.

Consider the stopping trees $\Xi_{\tau}^{\xi^{n}}$. Since we are conditioning on non-extinction, a simple Borel-Cantelli argument 
shows that, almost surely, in every surviving branch there are infinitely many nodes where the branch splits into two or more
subbranches. For definiteness let $N=N(\tau)$ be the least integer such that the number of subbranches $\#\{\lambda_{i}\in\Xi_{\tau}^{\xi^{N}}\}>1$. We know 
that $\widehat{\tau}_{i}=\sigma^{\lambda_{i}}\tau$ are independent and identical in distribution and hence, for $\zeta_{1},\zeta_{2}>1$,
\begin{align}
&\mu\left\{\tau\in\mathcal{T} \mid \exists C>0 \text{ s.t.\ for all $n$, }C^{-1}\zeta_{1}^{n}\leq Z_{n}^{s}(\tau)\leq C\zeta_{2}^{n}\right\}\nonumber\\
&=1-\prod_{j=1}^{\#\{\lambda_{i}\in\Xi_{\tau}^{\xi^{N}}\}}\left(1-
\mu\left\{\tau\in\mathcal{T} \mid \exists C>0 \text{ s.t.\ for all $n$, }C^{-1}\zeta_{1}^{n}\leq Z_{n}^{s}(\widehat{\tau}_{j})\leq C\zeta_{2}^{n}\right\}
\right)\label{eqn:conditionalMeasure1}\\
&=1-\left(1-\mu\left\{\tau\in\mathcal{T} \mid \exists C>0 \text{ s.t.\ for all $n$, }C^{-1}\zeta_{1}^{n}\leq Z_{n}^{s}(\tau)\leq C\zeta_{2}^{n}\right\}\right)^{\#\{\lambda_{i}\in\Xi_{\tau}^{\xi^{N}}\}}.\label{eqn:conditionalMeasure2}\\
&\geq 1-(1-\mu\left\{\tau\in\mathcal{T} \mid \exists C>0 \text{ s.t.\ for all $n$, }C^{-1}\zeta_{1}^{n}\leq Z_{n}^{s}(\tau)\leq C\zeta_{2}^{n}\right\})^{2}\label{eqn:unconditional}.
\end{align}
Note that the measure $\mu$ in (\ref{eqn:conditionalMeasure2}) and (\ref{eqn:conditionalMeasure2}) is conditioned on $\#\lambda_{i}$. However, since  $\#\{\lambda_{i}\in\Xi_{\tau}^{\xi^{N}}\}\geq2$ for all such nodes, (\ref{eqn:unconditional}) holds unconditionally.
For $x\in[0,1]$, the only solutions to $x\geq1-(1-x)^{2}$ are $0$ and $1$, and we conclude that the probability that $Z_{n}^{s}$ (eventually) increases at least at exponential rate $\zeta_{1}>1$ and at most at rate $\zeta_{2}>1$ is $0$ or $1$. 
Letting $\zeta_{1}<\beta_{1}$ and $\zeta_{2}>\beta_{2}$, it is easy to see that the probability must be $1$ by noticing that 
$\underline{\alpha}Z_{n}^{s}\leq Z_{n+1}^{s}\leq \overline{\alpha}Z_{n}^{s}$ and 
\[
\mu\left\{\tau \mid Z_{n}^{s}(\tau)\geq (\zeta_{2}+\epsilon)^{n}\text{ or }Z_{n}^{s}(\tau)\leq (\zeta_{1}-\epsilon)^{n}\right\}\to 0\text{ as }n\to\infty,
\]
for sufficiently small $\epsilon>0$.
This, and the arbitrariness of $\zeta_{1}$ imply that there exists $\gamma>1$ and a random constant $D_{\tau}$ such that $Z_{n}^{s}\geq D_{\tau}\gamma^{n}$ almost surely.

We can now bound the expression in~(\ref{lowerBoundWriting}) for $\delta<\xi$, where $k$ is such that $\xi^{k+1}<\delta\leq \xi^{k}$, and redefining $C^{*}$ if necessary, by
\begin{align*}
M_{\delta}(F_{\tau})&\geq C^{*}C_{\tau}\delta^{-(s_{B}-\epsilon/2)}\sum_{e\in\Xi_{\tau}^{\xi^{k}}}\overline{\psi}_{\tau}^{s_{B}-\epsilon/2}(e)\\
&\asymp C^{*}C_{\tau}\delta^{-(s_{B}-\epsilon/2)} Z^{s_{B}-\epsilon/2}_{k}\\
&\geq C^{*}C_{\tau}D_{\tau}\delta^{-(s_{B}-\epsilon/2)}\gamma^{k}.
\end{align*}
And so
\begin{align*}
\liminf_{\delta\to0}\frac{\log M_{\delta}(F_{\tau})}{-\log\delta}
&\geq\liminf_{\delta\to0}\frac{\log\left(\delta^{-(s_{B}-\epsilon/2)}\gamma^{k}\right)}{-\log\delta}\\
&=s_{B}-\epsilon/2+\liminf_{\delta\to0}k\frac{\log\gamma}{-\log\delta}\\
&=s_{B}-\epsilon/2-\frac{\log\gamma}{\log\xi}\geq s_{B}-\epsilon/2.
\end{align*}
\end{proof}
Using the same idea one can extend Lemma~\ref{upperBoundProof} and drop the condition $s\leq s^{x}+s^{y}$ by picking $l_{\sup}$ large enough such that $c_{k,l_{\sup}}\E(Y_{l_{\sup}}^{s})<1$, we omit details.
Combining Lemmas \ref{upperBoundProof} and \ref{lowerBoundProof} we conclude that Theorem \ref{mainInfinityResult} holds.


\end{document}